\newtheorem{theorem}{Theorem}[section]
\newtheorem{lemma}[theorem]{Lemma}
\newtheorem{definition}[theorem]{Definition}
\newtheorem{proposition}[theorem]{Proposition}
\newtheorem{corollary}[theorem]{Corollary}
\newtheorem{remark}[theorem]{Remark}
\newtheorem{remarks}[theorem]{Remarks}
\newcommand{\R}{\ensuremath{\mathbb{R}}}
\newcommand{\N}{\ensuremath{\mathbb{N}}}
\newcommand{\ccdot}{\,\cdot\,}
\renewcommand{\epsilon}{\varepsilon}
\newcommand{\comment}[1]{}
\newcommand{\longversion}[1]{}
\newcommand{\bcomment}[1]{}
\DeclareMathOperator{\diverg}{div}
\DeclareMathOperator{\dist}{dist}
\DeclareMathOperator{\supp}{spt}
\DeclareMathOperator*{\esssup}{ess\,sup}
\newcommand{\E}{E}
\newcommand{\Lm}{\mathcal{L}}
\newcommand{\Ln}{\mathcal{L}^n}
\newcommand{\dx}{\, dx}
\newcommand{\dt}{\, dt}
\newcommand{\ds}{\, ds}
\newcommand{\dr}{\, dr}
\newcommand{\dq}{\triangle_{k,h}}
\newcommand{\kabs}[1]{\ensuremath{\vert#1\vert}}
\newcommand{\babs}[1]{\ensuremath{\big\vert#1\big\vert}}
\newcommand{\Babs}[1]{\ensuremath{\Big\vert#1\Big\vert}}
\newcommand{\Bnorm}[1]{\ensuremath{\Big\Vert#1\Big\Vert}}
\newcommand{\bnorm}[1]{\ensuremath{\big\Vert#1\big\Vert}}
\newcommand{\knorm}[1]{\ensuremath{\Vert#1\Vert}}
\renewcommand{\sp}[2]{\ensuremath{\langle \, #1 , #2 \, \rangle}}
\titleformat{\section}{\bf\Large}{\thesection}{1em}{}
\titleformat{\subsection}{\bf\large}{\thesubsection}{1em}{}
\numberwithin{equation}{section}
\title{\vspace{-0.5cm}
{\bf Random perturbations of nonlinear parabolic systems}
}
\date{\today}
\author{Lisa Beck\footnote{Hausdorff Center for Mathematics, Universit\"at Bonn,
  Germany. Email address: {\tt lisa.beck@hcm.uni-bonn.de}.} 
  \and Franco Flandoli\footnote{Dipartimento di Matematica ``U. Dini'', Universit\`a 
  di Pisa, Italy. Email address: {\tt flandoli@dma.unipi.it}.} }
\begin{document}


\maketitle

\begin{abstract}
Several aspects of regularity theory for parabolic systems are
investigated under the effect of random perturbations. The deterministic
theory, when strict parabolicity is assumed, presents both classes of
systems where all weak solutions are in fact more regular, and examples of
systems with weak solutions which develop singularities in
finite time. Our main result is the extension of a regularity result due 
to Kalita to the stochastic case. Concerning the examples with singular solutions 
(outside the setting of Kalita's regularity result), we do not know whether 
stochastic noise may prevent the emergence of singularities, as it happens for 
easier PDEs. We can only prove that, for a linear stochastic parabolic system with coefficients outside the previous regularity theory, the expected value of
the solution is not singular. \\[1ex]
{\bf MSC (2010):} 60H15, 35B65, 35R60 (primary); 60H30 (secondary)
\end{abstract}


\section{Introduction}

Nonlinear parabolic systems of the form
\begin{equation}
\label{system-intro}
\partial_{t}u=\operatorname{div}A( x,t,u,Du)  ,\qquad
u|_{t=0}=u_{0}
\end{equation}
on a cylindrical domain $D \times (0,T)$, with $D \subset \R^n$ a bounded, regular domain, $u \colon D \times [0,T]  \rightarrow \mathbb{R}^{N}$, $A \colon D \times [0,T] \times \mathbb{R}^{N} \times \mathbb{R}^{nN} \rightarrow\mathbb{R}^{nN}$, have been investigated
by many authors, see for instance~\cite{KOSHELEV93,KOSHELEV95,KALITA94} and
references therein. A key feature in the vectorial case $N>1$ is that, 
under the strict parabolicity assumption
\[
\sum_{i,j=1}^n \sum_{\alpha,\beta = 1}^N \frac{\partial A^{\alpha}_{i}}{\partial z^{\beta
}_{j}}( x,t,u,z)  \, \xi^{\alpha}_{i}\xi^{\beta}_{j} \geq \lambda_0 \kabs{\xi}^{2} \qquad \text{ for all } \xi \in \R^{nN}
\]
and some differentiability assumptions with respect to the $(x,u)$-variable, 
there are classes of vector fields $A(x,t,u,z)$ such that all weak
solutions to~\eqref{system-intro} are in fact more regular, and examples of
systems such that there exist weak solutions with singularities;
this dichotomy does not happen for single equations, the case $N=1$, where
regularity of weak solutions is always true, due to the (elliptic and
parabolic) works based on the fundamental results of De Giorgi, Nash and Moser~\cite{DEGIORGI57,NASH58,MOSER60}. 

However, for second-order, parabolic systems under suitable additional assumptions on growth and regularity of the vector field $A(x,t,u,z)$, there are partial regularity results available, yielding H\"older regularity of the solution $u$ (or of its spatial gradient $Du$) outside of a negligible set, the singular set of $u$ (or of $Du$). Hence, for general systems which are nonlinear in the gradient variable, the best regularity to hope for is partial regularity of~$Du$, with an estimate for the Hausdorff dimension of the singular set strictly below the dimension of $\R^n \times [0,T]$, see~\cite{DUMIN05}. For better estimates on the Hausdorff dimension one needs to assume stronger assumptions (or also some a~priori information on the regularity of the solution). Regularity of $u$ on a larger set can be obtained, for instance, in low dimensions or with special structure assumptions (such as vector fields linear in the gradient variable), see e.\,g.~\cite{GIAGIUSTI73,CAMPANATO84,NECSVE91}. Full regularity of $u$ instead is only possible if even more restrictive structural assumptions are imposed. The easiest (and very classical one) of such examples are linear parabolic systems with constant  coefficients. In the nonlinear case, in the famous case of the $p$-Laplacian system it is also possible to prove full regularity of $Du$, see~\cite{DIBENE93}. Furthermore, if the system is still sufficiently close to the Laplacian system, then we still get full regularity of $u$, see~\cite{KOSHELEV93,KALITA94}. This regularity result will be of great importance for our paper.

Let us now go into some details, point out some of the structural prerequisites of the positive (full) regularity theory and confront it with the existing examples of systems admitting a singular weak solutions. For simplicity we focus here in the introduction on the case of quasilinear problems with a vector field of the
form $A(x,t) z$, i.\,e. to weak solutions of
\begin{equation}
\label{system-intro-linear}
\partial_{t}u=\operatorname{div} \big( A (x,t) \, Du \big) \,, \qquad
u|_{t=0}=u_{0} \,.
\end{equation}

As mentioned above, without additional structural conditions on the
coefficients full regularity of the solutions can no longer be expected in the
vectorial case. It was observed by Koshelev and Kalita~\cite{KOSHELEV93,KALITA94} that if the coupling of the single equations is sufficiently weak, then discontinuities of the weak solution can globally be excluded:

\begin{theorem}[\cite{KALITA94}]
\label{thm_Kalita}
Let $u_0 \in W^{1,q}(D,\R^N)$ for some $q>n$ and consider coefficients $A(x,t)$ which are of class $C^1$ in $x$, measurable in $t$ and which satisfy 
\begin{equation}
\label{ass-kalita-intro}
\lambda_{0} \kabs{ \xi }^{2} \leq \sp{ A( x,t)
\, \xi}{\xi} \,, \quad \kabs{A(x,t) \, \xi} \leq \lambda_{1} \kabs{ \xi } \,, \quad
\text{and} \quad \kabs{D_x A(x,t)} \leq L
\end{equation}
for all $\xi \in \R^{nN}$, $(x,t,z) \in D \times [0,T] \times \R^{nN}$ and some positive constants $\lambda_0, \lambda_1, L$. If in addition $\frac{\lambda_0}{\lambda_1} > 1 -
\frac{2}{n}$ holds, then every weak solution $u \colon D \times [0,T] \to \R^N$ to the initial boundary value problem~\eqref{system-intro-linear} is
of class $C^{0,\alpha}_{\rm{loc}}(D \times [0,T],\R^N)$ for some $\alpha > 0$.
\end{theorem}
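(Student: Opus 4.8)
The plan is to combine a Caccioppoli-type energy estimate with a freezing/perturbation argument that measures the deviation of $A(x,t)$ from a constant-coefficient system, and then to run a Moser-type iteration adapted to systems, where the smallness condition $\lambda_0/\lambda_1 > 1 - 2/n$ is exactly what makes the iteration close. First I would set up the scaled parabolic cylinders $Q_\rho(z_0) = B_\rho(x_0) \times (t_0 - \rho^2, t_0)$ and test the weak formulation of~\eqref{system-intro-linear} with $\varphi = \eta^2 (u - \xi)$ for a cutoff $\eta$ and suitable affine/constant comparison maps $\xi$; using the lower bound $\sp{A\xi}{\xi} \ge \lambda_0\kabs\xi^2$, the upper bound $\kabs{A\xi}\le\lambda_1\kabs\xi$, and absorbing the parabolic term via the usual Steklov-averaging trick, one obtains a reverse-Hölder/Caccioppoli inequality on nested cylinders.

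The second and crucial step is the perturbation estimate: write $A(x,t) = A(x_0,t) + (A(x,t) - A(x_0,t))$, so that $u$ solves a constant-in-$x$ (in each time slice) system up to an error term controlled by $\kabs{D_x A}\le L$ times $\rho$. For the frozen system $\partial_t v = \operatorname{div}(A(x_0,t) Dv)$ one has good a~priori comparison estimates, but the subtle point — and the real content of Kalita's theorem — is that the relevant quantity is not the energy of $Du$ but a weighted quantity (an $L^p$-type excess of $u$ with $p$ close to but above $2$), and the coercivity/boundedness ratio $\lambda_0/\lambda_1$ governs the gain in the iteration exponent. Concretely I would track a quantity like $\sup_t \mI{B_\rho}\kabs{u-(u)_{B_\rho}}^2 + \rho^{-2}\int_{Q_\rho}\kabs{u - \ell}^2$ against its value on a larger cylinder, show it decays like $(\rho/R)^{2\alpha}$ for some $\alpha>0$ provided $\lambda_0/\lambda_1$ exceeds $1-2/n$, and then invoke the parabolic Campanato/Morrey embedding to conclude $u \in C^{0,\alpha}_{\rm loc}$.

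The main obstacle is precisely making the iteration exponent work out: the condition $\lambda_0/\lambda_1 > 1 - 2/n$ enters through a sharp constant in an interpolation (Gagliardo–Nirenberg or parabolic Sobolev) inequality combined with the Caccioppoli estimate, and one has to be careful that the error terms from unfreezing the coefficients (order $L\rho$) and from the lower-order/initial-data terms (controlled by $u_0\in W^{1,q}$, $q>n$, which gives the needed starting integrability and handles the behaviour up to $t=0$) do not destroy the contraction. I expect the cleanest route is to reduce to an inequality of the form $\Phi(\rho) \le C(\theta^\gamma + \omega(\rho))\Phi(2\rho) + C\rho^{2\beta}$ for the excess functional $\Phi$, with $\gamma$ depending monotonically on $\lambda_0/\lambda_1$ and $\gamma>0$ exactly when $\lambda_0/\lambda_1 > 1-2/n$, after which a standard iteration lemma yields the Hölder modulus. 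The role of $u_0 \in W^{1,q}$ with $q > n$ is to provide, via the initial trace, enough integrability to kick-start the iteration and to control the solution near the initial time slice, so that the interior estimate upgrades to one on $D \times [0,T]$.
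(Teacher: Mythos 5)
Your sketch diverges from the route this theorem is actually proved along (and from the way the present paper extends it to the stochastic case), and at the decisive point it has a genuine gap: you never substantiate the mechanism by which the dispersion ratio $\lambda_0/\lambda_1 > 1-\tfrac2n$ enters. In a freezing/Campanato excess-decay scheme for systems, the decay constant of the comparison problem does not ``switch on'' at any particular value of $\lambda_0/\lambda_1$; such schemes either give only \emph{partial} regularity (one needs smallness of the excess as a hypothesis to start the iteration, which is exactly what fails in the De Giorgi and Star\'a--John examples), or, when they give full regularity for linear systems, they do so using continuity of the coefficients and no ratio condition at all. Moreover, here $A$ is only measurable in $t$, so the frozen operator $A(x_0,t)$ is itself not jointly continuous; the comparison estimates you invoke for the frozen problem would have to be established separately (possible, since $x$-independent systems can be differentiated in $x$, but you do not address it), and in any case the ratio would never appear there. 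The claimed inequality $\Phi(\rho)\le C(\theta^\gamma+\omega(\rho))\Phi(2\rho)+C\rho^{2\beta}$ with ``$\gamma>0$ exactly when $\lambda_0/\lambda_1>1-2/n$'' is therefore an assertion, not a proof step, and it is precisely the content of the theorem.

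The actual mechanism (Kalita, following Koshelev, and mirrored in this paper's stochastic proof) is a Moser-type iteration on \emph{powers}: one differentiates the system (via finite difference quotients, since $u$ is only a weak solution) and tests with quantities of the form $v=u\,\kabs{u}^{s}$ (resp.\ truncated powers of $\dq u$), where the elementary but crucial algebraic fact is $Du\cdot Dv\ge \mu^{1/2}(s)\,\kabs{Du}\,\kabs{Dv}$ with $\mu(s)=1-(\tfrac{s}{2+s})^2$ (Lemmas~\ref{tech-kalita} and~\ref{tech-kalita-2}). Coercivity of the tested form survives only as long as $\mu^{1/2}(s)$ exceeds the defect of parabolicity measured by $\lambda_0/\lambda_1$, and the hypothesis $\lambda_0/\lambda_1>1-\tfrac2n$ is exactly what allows the admissible power to pass the threshold giving $Du\in L^{\infty}(0,T;L^{p}_{\rm loc})$ with $p>n$; spatial H\"older continuity then follows by Sobolev embedding on time slices, and H\"older continuity in time is obtained separately from the equation (continuity of $t\mapsto u(t)$ in $L^2$ plus the interpolation argument of Lemma~\ref{lem_det_Hoelder}), with $u_0\in W^{1,q}$, $q>n$, feeding the iteration through the initial-gradient norms. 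These ingredients --- the angle inequality for powers, the resulting gradient integrability above the dimension, and the space-time patching --- are absent from your proposal, so as it stands it would not close.
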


It is important to mention that the original results apply to more general systems, possibly nonlinear in the gradient variable, provided that the vector field $A(x,t,u,z)$ is sufficiently close to a quasilinear situation with a small dispersion ratio. First, Koshelev proved the existence of a regular solution (which in the situation above is the unique one) by studying an approximation of the system such that its solutions are regular and converge in a suitable norm to a solution of the original system. Later Kalita achieved the regularity result for all solutions with a direct argument (and not as a consequence of a suitable approximating sequence), which essentially relies on Moser's iterative method~\cite{MOSER60}.

Under weaker assumptions than in the previous theorem, such a global regularity
result can no longer be expected. In fact, in a very similar setting the
following example of a system was proposed by Stara and John~\cite{STAJOH95} (actually, the example was constructed on the full space and the solution can be traced back in time $t \to -\infty$), which admits a solutions that starts from a regular -- in particular H\"older continuous -- initial data and develops a
singularity in finite time in the interior of the parabolic cylinder.

\begin{theorem}[\cite{STAJOH95}]
Let $n=N \geq 3$. There exist initial data $u_0 \in W^{1,2n}(B_1(0),\R^n)$ 
and measureable, symmetric coefficients 
$A \in L^{\infty}(B_1(0) \times [0,1),R^{n^2 \times n^2})$,
which are elliptic and bounded in the sense of~\eqref{ass-kalita-intro}$_{1,2}$ for
all $(x,t) \in B_1(0) \times [0,1)$, such that at least one of the solutions to the
initial problem~\eqref{system-intro-linear} develops a discontinuity in the
origin $x=0$ as $t \nearrow 1$.
\end{theorem}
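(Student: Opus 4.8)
The plan is to reconstruct the singular example of Stara and John by exhibiting an explicit radial map together with an explicit coefficient field for which this map solves the system, and then to check the required bounds and the blow-up behaviour. The natural ansatz, modelled on the well-known elliptic examples of Giusti--Miranda and the parabolic constructions of Stara--John--Maly, is to look for a self-similar solution of the form $u(x,t) = U\big(x / \sqrt{1-t}\big)$ on $B_1(0) \times [0,1)$, where $U \colon \R^n \to \R^n$ is a time-independent map that is smooth away from the origin but has a non-removable discontinuity there, e.g.\ $U(y) = y/\kabs{y} \cdot \varphi(\kabs{y})$ for a suitable cutoff $\varphi$ matching the boundary data; up to such cutoffs the prototype is $U(y)=y/\kabs{y}$. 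The time scaling is chosen so that the profile gets compressed into the origin precisely as $t \nearrow 1$, which produces the discontinuity at $(0,1)$ even though $u(\ccdot,0)=U$ is as regular as $U$ is, in particular H\"older continuous away from $0$ and of class $W^{1,2n}$ since $\kabs{DU} \sim \kabs{y}^{-1}$ is $2n$-integrable in dimension $n \geq 3$ (here the dimension restriction $n \geq 3$ enters, together with the requirement $u_0 \in W^{1,2n}$).

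The second step is to produce the coefficients. Rather than solving a genuine PDE, one uses the classical trick: given the candidate solution $v := u$, one computes $\partial_t v - \operatorname{div}(\ccdot)$ and \emph{designs} $A(x,t)$ so that the equation holds. Concretely, plugging the ansatz into~\eqref{system-intro-linear}, the self-similar structure reduces the PDE to an elliptic identity for the profile $U$ after absorbing the $\partial_t$ term; one then picks $A$ of the block form $A(x,t) = a\big(x/\sqrt{1-t}\big)$ with $a(y)$ a symmetric matrix field depending only on the direction $y/\kabs{y}$ (bounded and measurable, with a genuine discontinuity in direction at $y=0$ but bounded values) chosen so that $\operatorname{div}\big(a(y)\, DU(y)\big)$ equals the term coming from $\partial_t$ and the rescaling. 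Because $DU$ is homogeneous of degree $-1$ and $a$ is homogeneous of degree $0$, the product $a\,DU$ is homogeneous of degree $-1$ and its divergence is homogeneous of degree $-2$, which matches the scaling of the $\partial_t$ term; this homogeneity bookkeeping is what makes such a closed-form choice possible. One then verifies algebraically that $a$ can be taken symmetric and satisfying the ellipticity and boundedness bounds~\eqref{ass-kalita-intro}$_{1,2}$ with constants $\lambda_0, \lambda_1$ depending only on $n$; necessarily the dispersion ratio $\lambda_0/\lambda_1$ will violate Kalita's condition $\lambda_0/\lambda_1 > 1 - 2/n$, consistently with Theorem~\ref{thm_Kalita}.

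The third step is to confirm that $v$ is genuinely a \emph{weak} solution in the PDE sense (test against $W^{1,2}_0 \cap L^\infty$ functions, integrate by parts, using that $\kabs{DU} \in L^2$ near the origin for $n \geq 3$ so no distributional mass concentrates at the singular line), that it attains the initial data $u_0 = U$ and the (time-independent) lateral boundary data, and that it is discontinuous at $(0,1)$: indeed $\limsup_{t \nearrow 1} \operatorname*{osc}_{B_r(0)} u(\ccdot,t) \geq 1$ for every $r$ because the compressed profile always realizes the full range of $y/\kabs{y}$ in any neighbourhood of the origin. One should also note non-uniqueness is automatic once the coefficients are merely $L^\infty$ and the system is vectorial, so it suffices to exhibit this one solution among possibly many (``at least one of the solutions''); alternatively a short energy argument shows $v$ is the unique solution in its class, but this is not needed for the statement.

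The main obstacle is the second step: making an \emph{explicit} symmetric coefficient field work. One has to juggle three constraints simultaneously — (i) the algebraic identity that turns the ansatz into a true solution, (ii) symmetry of $A$, and (iii) the uniform ellipticity/boundedness bounds — and it is not obvious a priori that all three are compatible with a single closed-form $a(y/\kabs{y})$; the resolution (following Stara--John) is to allow $a$ to have a controlled but non-trivial dependence on the radial vs.\ tangential decomposition relative to $y$, typically $a = \mathrm{Id} + c_n\, (y/\kabs{y}) \otimes (y/\kabs{y})$-type perturbations, tune the scalar constant(s) against the $\partial_t$-term, and then check positivity of the resulting quadratic form by diagonalizing in the radial/tangential frame. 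A secondary technical point is handling the cutoff $\varphi$ near $\kabs{y}=1$ so that the coefficients remain admissible near the lateral boundary while the profile matches smooth boundary data there; this only perturbs $a$ in a region bounded away from the singularity and can be absorbed into the constants.
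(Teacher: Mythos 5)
There is a genuine gap, and it sits at the heart of the statement. The theorem (quoted by the paper from~\cite{STAJOH95}, which gives no proof here) asserts that a \emph{regular} initial datum $u_0 \in W^{1,2n}$ develops a discontinuity only as $t \nearrow 1$. In your ansatz $u(x,t)=U\big(x/\sqrt{1-t}\big)$ you place the non-removable discontinuity of the profile $U$ at the origin $y=0$. But the map $x \mapsto x/\sqrt{1-t}$ is just a dilation fixing the origin, so with this choice $u(\ccdot,t)$ is discontinuous at $x=0$ for \emph{every} $t$, including $t=0$: nothing develops in finite time, the singularity is present in the initial datum. Moreover your justification that such a $U$ lies in $W^{1,2n}$ is false: $\kabs{DU}\sim\kabs{y}^{-1}$ belongs to $L^p(B_1)$ in dimension $n$ only for $p<n$, so it is never $2n$-integrable; and since $W^{1,2n}(B_1)$ embeds into $C^{0,1/2}$, a discontinuous $U$ cannot serve as the initial datum at all. (The exponent $2$ for which $\kabs{y}^{-1}$ is integrable when $n\geq 3$ is relevant for membership in the energy class $W^{1,2}$, not for the $W^{1,2n}$ hypothesis.) The correct placement, as in the Star\'a--John construction, is the opposite one: take $U$ regular (say Lipschitz, e.g. $U(y)=\varphi(\kabs{y})\,y/\kabs{y}$ with $\varphi(r)/r$ bounded near $0$) and converging to the angular map $y/\kabs{y}$ as $\kabs{y}\to\infty$; then $u_0=U(\ccdot)$ is regular, while for each fixed $x\neq 0$ the self-similar variable satisfies $\kabs{x}/\sqrt{1-t}\to\infty$ as $t\nearrow 1$, so the discontinuous asymptotic profile emerges exactly at time $1$ at the origin.

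Two further points. First, your homogeneity bookkeeping ($U$ of degree $0$, $DU$ of degree $-1$) only applies to the exactly homogeneous profile, which by the above cannot be used; with the corrected, non-homogeneous profile the self-similar substitution still reduces the system to the elliptic identity $\operatorname{div}\big(a(y)\,DU(y)\big)=\tfrac12\,DU(y)\,y$ for $A(x,t)=a\big(x/\sqrt{1-t}\big)$, so the reduction survives, but the genuinely hard part of the construction --- producing a \emph{symmetric}, measurable $a$ satisfying~\eqref{ass-kalita-intro}$_{1,2}$ uniformly and solving this identity --- is only asserted in your sketch, not carried out, and it is precisely here that the quantitative failure of the dispersion-ratio condition of Theorem~\ref{thm_Kalita} must be arranged. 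Second, the remark that non-uniqueness is ``automatic'' for vectorial systems with $L^\infty$ coefficients is unsubstantiated; it is also unnecessary, since exhibiting one singular solution suffices for the statement.
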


The coefficients constructed in~\cite{STAJOH95} have a dispersion ratio
$\frac{\lambda_0}{\lambda_1} < 1 - \frac{2}{n}$ below the critical one
investigated in~\cite{KALITA94} as well as a lower regularity with respect to
the $x$-variable. For this reason we cannot exclude that the solution develops
the discontinuity due to an interaction with the irregular coefficients.
However, a comparison with the positive regularity results in the elliptic
theory (the stationary case) suggests that the dispersion ratio
$\frac{\lambda_0}{\lambda_1}$ plays an important role. Indeed, in the elliptic
case no regularity in the $x$-variable is required, and a modification of De
Giorgi's counterexample to full regularity shows sharpness of the (elliptic)
condition on the dispersion ratio (see~\cite[Section~2.5]{KOSHELEV95}).
Unfortunately, we didn't find further counterexamples in the literature which
could give a similar complete picture in the case of parabolic systems.

The aim of this paper is to investigate parts of this theory under the effect
of random perturbations. The final aim of our research project, in analogy with
recent results proved for other equations, is to show that the regularity theory of
parabolic systems is, under random perturbations, in some sense (of course only up to a certain degree of regularity) not worse than the deterministic one, and possibly better. 
As in the deterministic case there is more than one approach to the analysis of 
these problems, so we restrict the attention here only to a few directions. 
More precisely, we want to show two results.

First, we study systems with It\^o noise of the form 
\begin{equation}
\label{system-intro-linear-stoch}
du  = \diverg \big( A (x,t) \, Du \big) \dt + H(Du) \, dB_t  \,, \qquad
u|_{t=0}=u_{0}
\end{equation}
(for $H$ Lipschitz) where $(B_{t}) _{t\geq0}$ is a Brownian motion of suitable dimension, and we generalize to the stochastic case one of the regularity results of the deterministic theory, the work of Kalita~\cite{KALITA94} (which was displayed
also above for the special case of a quasilinear system). The passage from deterministic to stochastic of Kalita's approach contains at least one non trivial detail which is rather new in the stochastic setting: the weak solution $u$ we start with is not, a priori, the limit of a sequence of smooth solutions of approximating equations (for instance, due to the nonlinearity, classical mollifiers are difficult to implement; in another direction, in some cases solutions exist as limits of Galerkin or other types of approximations, but we here start with a weak solution which a priori has not been constructed in that way). Therefore, it is not clear how to perform
differential calculus on $u$; and Kalita's approach is heuristically based on
an equation satisfied by second space derivatives of $u$. Therefore one has
to use finite difference quotients in place of derivatives, a classical method in the
deterministic setting, but not common in the stochastic case. This leads to a
number of technical novelties. At the end, we reach a full extension of
Kalita result to a quite general stochastic case, which includes in particular 
perturbations in form of additive or of multiplicative noise. In the quasi-linear 
model case we obtain -- as a particular case of Theorem~\ref{main-result} -- the following result (the precise definition of weak solution is given in Definition~\ref{def_weak_solution_ito} below).

\begin{theorem}
\label{thm-regularity-model}
Let $u_0 \in W^{1,q}(D,\R^N)$ for some $q>n$. Consider coefficients $A(x,t)$ which are of class $C^1$ in $x$, measurable in $t$ and which satisfy~\eqref{ass-kalita-intro} with 
$\frac{\lambda_0}{\lambda_1} > 1 - \frac{2}{n}$, and assume that $H$ is Lipschitz 
continuous with Lipschitz constant $L_H < L_H^*$ for some sufficiently small $L_H^*$ depending only on $n, \lambda_0$ and $\lambda_1$. 
Then there exists $\alpha > 0$ depending only on $n,\lambda_0, \lambda_1$ and $q$ 
such that every weak solution $u \colon D \times [0,T] \times \Omega \to \R^N$ to the initial boundary value problem~\eqref{system-intro-linear-stoch} is of class $C^{0,\alpha}_{\rm{loc}}(D \times [0,T],\R^N)$ with probability $1$.
\end{theorem}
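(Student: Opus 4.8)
The plan is to follow Kalita's deterministic strategy, adapted to the stochastic setting via finite difference quotients in place of spatial derivatives. The heart of the matter is that Hölder continuity of $u$ will follow from a Moser-type iteration applied not to $u$ itself, but to (difference quotients of) $Du$; so the first task is to establish an $L^p_{\mathrm{loc}}$ bound for $Du$ with $p>2$, uniformly in the difference-quotient parameter, and ultimately an $L^\infty$-type control. Concretely, I would first fix a parabolic cylinder $Q \Subset D\times(0,T)$ and a cutoff $\eta$, apply It\^o's formula to test functions of the form $\eta^2 |\triangle_{k,h} u|^{2(\theta-1)} \triangle_{k,h} u$ (here $\triangle_{k,h}$ is the finite difference in the $k$-th spatial direction at scale $h$), and derive the basic energy inequality for the difference quotients. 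The strict parabolicity $\langle A\xi,\xi\rangle \ge \lambda_0|\xi|^2$ together with the dispersion bound $|A\xi|\le\lambda_1|\xi|$ produces, after summing over $k$ and exploiting the algebraic inequalities that underlie the condition $\lambda_0/\lambda_1 > 1 - 2/n$, a reverse-H\"older / Caccioppoli inequality on the level $\theta$ with a gain in integrability; the noise term $H(Du)\,dB_t$ contributes, via It\^o's isometry and the Lipschitz bound $L_H$, a term that is absorbed into the left-hand side precisely when $L_H < L_H^*(n,\lambda_0,\lambda_1)$. Taking expectations controls the martingale part (a local martingale, which one first localizes by stopping times, then removes the stopping via Fatou/monotone convergence).

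Next I would run the Moser iteration on the hierarchy of levels $\theta_j$, $\theta_{j+1} = \chi\theta_j$ with $\chi>1$ the Sobolev exponent gain in the parabolic embedding, on a nested sequence of cylinders $Q_j \downarrow Q_\infty$. Each step uses the parabolic Sobolev inequality (the interpolation between $L^\infty_t L^2_x$ and $L^2_t W^{1,2}_x$) to convert the energy bound at level $\theta_j$ into an $L^{2\chi\theta_j}$ bound; iterating and summing the (convergent, by the usual geometric-series argument) constants yields an estimate of the form $\sup_{Q_\infty} \mathbb{E}\,|\triangle_{k,h} u|^{2} \lesssim h^{2} \,(\text{data})$, i.e.\ after dividing by $h^2$ and letting $h\to 0$, an $L^\infty_{\mathrm{loc}}$ bound for $\mathbb{E}|Du|^{2}$, and with more work a genuine $L^\infty$ bound for $Du$ in the relevant sense — or at least the De Giorgi–Nash–Moser oscillation decay for $u$ itself. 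The initial data $u_0\in W^{1,q}$ with $q>n$ enters to give the starting integrability on the parabolic boundary and to fix the H\"older exponent $\alpha=\alpha(n,\lambda_0,\lambda_1,q)$. Since all estimates are on expectations of nonnegative quantities and the cylinders and constants are deterministic, a final application of Kolmogorov's continuity criterion (in the space variables, and separately a parabolic version in $t$) upgrades the moment bounds to almost-sure local H\"older continuity of $u$.

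The main obstacle — and the genuinely new point relative to~\cite{KALITA94} — is the stochastic It\^o calculus on the difference quotients $\triangle_{k,h}u$. In the deterministic case one formally differentiates the equation; here $u$ is only a weak solution, not known to be a limit of smooth approximations, so one cannot differentiate, and even applying It\^o's formula to nonlinear functions of $\triangle_{k,h}u$ requires care: $\triangle_{k,h}u$ solves a shifted equation with coefficients $A(x+he_k,t)$ and an inhomogeneity coming from $\triangle_{k,h}A$, and $\triangle_{k,h}u$ must be shown to be an admissible test-class process for an infinite-dimensional It\^o formula (this is where the hypothesis $q>n$, giving enough a priori integrability, and a careful mollification-in-$\omega$-free argument are needed). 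The It\^o correction term $\tfrac12 \|H(Du)\|^2$, once finite differences are taken, produces cross terms of the form $\triangle_{k,h}(H(Du))\cdot(\dots)$ that must be estimated by $L_H |\triangle_{k,h}Du|$ and then absorbed; controlling these uniformly in $h$, so that the smallness threshold $L_H^*$ depends only on $n,\lambda_0,\lambda_1$ and not on $h$ or the solution, is the delicate quantitative step. Everything else — the Caccioppoli inequality, the Sobolev embedding, the iteration bookkeeping, the passage $h\to0$, and the final Kolmogorov argument — is routine once this stochastic differential-calculus-on-difference-quotients framework is in place.
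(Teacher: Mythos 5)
Your skeleton (finite difference quotients, an infinite-dimensional and a pointwise It\^o formula applied to powers of $\dq u$, absorption of the noise via the smallness of $L_H$, stopping times to handle the local martingale) is indeed the route of the paper, but the plan breaks down at the very first passage to expectations. A weak solution is only assumed to lie in $V^2(D_T,\R^N)$ pathwise; no moments of $u$ or $Du$ are assumed, and none can be derived, because the interior Caccioppoli-type inequality for $\dq u$ carries on its right-hand side solution-dependent factors (terms of the form $\int_0^t(\knorm{\dq u\,\eta}_{L^2}^2+1)\,G'(u,f)\,ds$, with $G'(u,f)$ containing $\knorm{Du}_{L^2(D)}^2$ and superquadratic norms of $u$ on the \emph{larger} set, which a cut-off argument cannot reabsorb) whose expectations may well be infinite. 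Hence an unweighted bound such as $\sup\E\kabs{\dq u}^2\lesssim\text{data}$ is not obtainable, and ``taking expectations'' as you describe it is unjustified. The missing idea is the stochastic Gronwall weighting that is the technical core of the paper: one applies the one-dimensional It\^o formula to ${\rm exp}(-\int_0^t c\,G(u,f))\cdot\knorm{\cdot}^2$, obtains expectation bounds for \emph{weighted} quantities, uniformly in $h$ (Lemmas~\ref{lemma_apriori}, \ref{Diff-quotients}, \ref{diff-quotients-iteration}), and then recovers pathwise information from the a.s.\ strict positivity of the weight through the weighted difference-quotient criterion of Theorem~\ref{thm-derivatives}. (A further technical point you gloss over: the power nonlinearities must be truncated by the $C^2$ functions $T_{q,K}$ of Lemma~\ref{tech-kalita-2} before It\^o's formula and Fubini/martingale arguments are legitimate.)

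The target of your iteration and the endgame are also off. The Moser-type iteration cannot be run to $L^\infty$: the admissible power $q$ is capped by the dispersion ratio through $\mu^{1/2}(q)>(1-\nu^2)^{1/2}$, i.e.\ $q<1/(1-\nu)$, and the admissible Lipschitz constant $L_H$ shrinks as $q$ grows, cf.~\eqref{restrict-q}; beyond that threshold the ellipticity of the tested quantity is lost entirely, not merely with worse constants. The hypothesis $\lambda_0/\lambda_1>1-2/n$ buys exactly that some $q^*>n/2$ is admissible, i.e.\ $Du\in L^{\infty}(0,T;L^{2q^*}_{\rm loc})$ with $2q^*>n$ pathwise, and nothing more; neither an $L^\infty$ gradient bound nor a De~Giorgi--Nash--Moser oscillation decay for $u$ is available (the latter is false for systems --- that failure is the raison d'\^etre of the dispersion condition). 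Finally, Kolmogorov's criterion in the space variable cannot be the closing step, since it would require precisely the unweighted moment bounds that are unavailable; the paper instead concludes pathwise: Sobolev embedding on time slices gives spatial H\"older continuity from $2q^*>n$, Kolmogorov's criterion (after a stopping-time localization) is used only for the time regularity of the $L^2(D)$-valued stochastic integral, and the two are combined by the deterministic Lemma~\ref{lem_det_Hoelder} via Proposition~\ref{prop_Hoelder}.
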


Second, we investigate for these systems the problem recently considered for
other classes of PDEs, see~\cite{FLALNM}, namely the possibility that it is
precisely the noise which prevents the emergence of singularities. The aim of this
project, that we have reached only partially until now, would be to prove
that, under assumptions on the vector field $A(x,t,u,z)$ such that
there exist weak solutions with singularities in the deterministic case, there
are no more singularities if we add a suitable noise. We can only prove an
intermediate but promising result. We consider linear stochastic systems with
Stratonovich bilinear multiplicative noise of the form

\begin{equation}
\label{system-intro-Strat}
du=\operatorname{div} \big( A( x,t)  Du \big) \dt + \sigma Du\circ dB_t
,\qquad  u|_{t=0}=u_{0}
\end{equation}
with regular, bounded and elliptic measurable coefficients $A$. In this
situation we obtain regularity for the mean value.

\begin{proposition}
Given coefficients $A(x,t)$ which are of class $C^1$ in $x$, measurable in $t$, and which satisfy~\eqref{ass-kalita-intro}, there exists $\sigma
_{0}\geq0$ depending only on $\lambda_0, \lambda_1$ such that for all 
$\sigma>\sigma_{0}$, all initial conditions $u_0 \in W^{1,q}(D,\R^N)$ with $q>n$, 
and all weak solutions $u$ of equation~\eqref{system-intro-Strat}
we have that the function $(x,t) \longmapsto E[u(x,t)]$ is locally 
H\"{o}lder continuous on $D \times [0,T]$.
\end{proposition}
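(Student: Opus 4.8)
The plan is to reduce the assertion to the deterministic regularity result of Theorem~\ref{thm_Kalita}, by showing that the mean value $v(x,t):=E[u(x,t)]$ solves, in the weak sense, a \emph{deterministic} linear parabolic system whose dispersion ratio has been improved by the noise. The mechanism is the familiar one for transport-type noise: passing from Stratonovich to It\^o form and then taking expectations produces an additional constant-coefficient diffusion term $\tfrac{\sigma^2}{2}\Delta$, and the dispersion ratio of the resulting system tends to $1$ as $\sigma\to\infty$, so that for $\sigma$ large it lies above the critical threshold $1-\tfrac2n$.

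First I would rewrite equation~\eqref{system-intro-Strat} in It\^o form. Writing $B_t=(B^1_t,\dots,B^n_t)$ and the bilinear noise as $\sigma\sum_{i=1}^n\partial_{x_i}u\circ dB^i_t$, the It\^o--Stratonovich corrector $\tfrac{1}{2}\sum_{i=1}^n d[\sigma\,\partial_{x_i}u,B^i]$ equals $\tfrac{\sigma^2}{2}\Delta u\,dt$ (the martingale part of $\partial_{x_i}u$ being $\sigma\sum_{j}\partial_{x_j}\partial_{x_i}u\,dB^j$, as one sees by differentiating the equation in $x_i$). Interpreting, as is standard, the second-order term $\tfrac{\sigma^2}{2}\Delta u$ weakly as $\operatorname{div}\big(\tfrac{\sigma^2}{2}\,Du\big)$, equation~\eqref{system-intro-Strat} takes the It\^o form
\begin{equation*}
du=\operatorname{div}\big( (A(x,t)+\tfrac{\sigma^2}{2}\,\Id)\,Du \big)\dt+\sigma\sum_{i=1}^n\partial_{x_i}u\,dB^i_t ,
\end{equation*}
with $\Id$ the identity on $\R^{nN}$; in fact one should take this It\^o equation as the very definition of a weak solution of~\eqref{system-intro-Strat}.

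Next I would test this equation against $\phi\in C_c^\infty(D,\R^N)$ and take expectations. The resulting stochastic integral $\sigma\sum_i\int_0^t\sp{\partial_{x_i}u_s}{\phi}\,dB^i_s$ is a genuine martingale — here one uses the integrability $E\int_0^T\knorm{Du_s}_{L^2}^2\,ds<\infty$ built into the notion of weak solution — hence has vanishing mean, while Fubini moves the expectation inside the remaining, now deterministic, time integral; by linearity of the principal part, $E\,\sp{(A+\tfrac{\sigma^2}{2}\Id)Du_s}{D\phi}=\sp{(A+\tfrac{\sigma^2}{2}\Id)Dv_s}{D\phi}$. Therefore $v=E[u]$ is a weak solution of the deterministic initial (and boundary) value problem $\partial_t v=\operatorname{div}\big((A(x,t)+\tfrac{\sigma^2}{2}\,\Id)Dv\big)$, $v|_{t=0}=u_0$, with $u_0\in W^{1,q}(D,\R^N)$ and $q>n$. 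The new coefficients $A(x,t)+\tfrac{\sigma^2}{2}\Id$ are still of class $C^1$ in $x$ (with the same bound $L$, the added term being constant) and measurable in $t$; they satisfy~\eqref{ass-kalita-intro}$_{1,2}$ with ellipticity constant $\lambda_0+\tfrac{\sigma^2}{2}$ and upper bound $\lambda_1+\tfrac{\sigma^2}{2}$; and their dispersion ratio $\dfrac{\lambda_0+\sigma^2/2}{\lambda_1+\sigma^2/2}$, which is non-decreasing in $\sigma$, converges to $1$ as $\sigma\to\infty$. An elementary computation shows $\dfrac{\lambda_0+\sigma^2/2}{\lambda_1+\sigma^2/2}>1-\tfrac2n$ for every $\sigma>\sigma_0$, where $\sigma_0:=\big(\max\{0,(n-2)\lambda_1-n\lambda_0\}\big)^{1/2}$ works and depends only on $n,\lambda_0,\lambda_1$. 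For such $\sigma$, Theorem~\ref{thm_Kalita} applies to $v$ and gives that $(x,t)\mapsto E[u(x,t)]=v(x,t)$ is locally H\"older continuous on $D\times[0,T]$.

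The main obstacle is the rigorous justification that $v=E[u]$ is a weak solution in precisely the sense required by Theorem~\ref{thm_Kalita}. Concretely one must verify: (i) that the It\^o--Stratonovich conversion above is legitimate for the weak solution $u$ at the regularity available — most cleanly by adopting the It\^o form as the definition and checking its consistency with the Stratonovich formulation whenever the latter is meaningful; (ii) that expectation commutes with the spatial pairing $\sp{\ccdot}{\phi}$ and with the time integral, and that the stochastic integral indeed has zero mean — all consequences of moment bounds of the type $E\sup_{t\le T}\knorm{u(t)}_{L^2}^2+E\int_0^T\knorm{Du}_{L^2}^2\,dt<\infty$ contained in (or derivable from the energy estimate for) the notion of weak solution; and (iii) that $v$ then inherits the function-space regularity demanded in Theorem~\ref{thm_Kalita} — membership in $L^2(0,T;W^{1,2})$ with time derivative in the appropriate dual space, the correct initial trace, and so on — which follows from the same bounds together with Jensen's inequality. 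Once these points are in place, the conclusion follows at once.
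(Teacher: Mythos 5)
Your overall strategy is exactly the paper's: pass from the Stratonovich equation to the It\^o form so that the corrector $\frac{\sigma^{2}}{2}\Delta u$ appears, take expectations so that the It\^o integral drops out and $v=E[u]$ solves the deterministic system with coefficients $A+\frac{\sigma^{2}}{2}\Id$, observe that its dispersion ratio $\frac{\lambda_0+\sigma^2/2}{\lambda_1+\sigma^2/2}$ exceeds $1-\frac{2}{n}$ for $\sigma>\sigma_0$ (your formula for $\sigma_0$ is correct), and invoke Theorem~\ref{thm_Kalita}. The genuine gap is your claim that the moment bound $E\int_0^T\knorm{Du}_{L^2}^2\,dt<\infty$ (and $E\sup_t\knorm{u(t)}_{L^2}^2<\infty$) is ``built into the notion of weak solution''. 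In this paper a weak solution of \eqref{linear SPDE} (Definition~\ref{definition solution stoch}) is only required to lie in $V^2$ locally \emph{with probability one}; no moment of the $V^2$-norm is assumed, and Section~4.1 stresses that $E[\knorm{u}_{V^2}]$ may a priori be infinite. Without such a bound neither the zero-mean property of the It\^o integral nor your Fubini interchanges are justified. The paper closes this in Step~1 of Proposition~\ref{stoch to det}, proving \eqref{bounds on u} by rerunning the stochastic energy estimate of Lemma~\ref{lemma_apriori} (It\^o formula in the Gelfand triple, spatial cutoff, stopping times, Fatou/monotone convergence), using linearity so that the Gronwall weight is constant and can be discarded; saying the bound is ``derivable from the energy estimate'' names the right tool but omits the actual step, which is the substantive part of the argument rather than a formality.

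The second weak point is the It\^o--Stratonovich conversion itself. Identifying the martingale part of $\partial_{x_i}u$ by ``differentiating the equation in $x_i$'' is not legitimate at the regularity of a weak solution (no second spatial derivatives are available at this stage), and your fallback of simply \emph{defining} solutions of \eqref{system-intro-Strat} through the It\^o equation proves a different statement from the one asserted, which concerns weak solutions of the Stratonovich equation. The paper's Proposition~\ref{proposition Stratonovich Ito} carries out the conversion entirely at the weak level: Definition~\ref{definition solution stoch} demands that $s\mapsto\int_D u\,D\varphi\,dx$ be a continuous semimartingale, and the joint quadratic variation $[\,\int_D u\,D_i\varphi\,dx,\,B^i\,]_t$ is computed by applying the equation with test function $D_i\varphi$, which produces the weak form $\frac{\sigma^2}{2}\int_0^t\int_D u\,\Delta\varphi\,dx\,ds$ of the corrector. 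If you keep the Stratonovich formulation in the hypothesis, this equivalence must be proved along those lines; with these two points repaired, your argument coincides with the paper's proof.
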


This result is proved by applying the deterministic results. 
The key observation in the proof is that the average solves an equation with a better (that is greater) dispersion ratio. So far this class does not cover the counter-examples in~\cite{STAJOH95} mentioned above (because here we need more regularity of $A$
than in the counter-example), but however the theory presented here might be of 
its own interest. We cannot take $\sigma_{0}=0$ but we suspect that this is the 
critical value (namely that for all noise intensities the regularization takes place). 
The fact that this result holds independently of the initial condition $u_{0}$ -- though sufficiently regular -- and of the specific form of $A(x,t)$ in the functional class we consider (in particular the fact that no symmetry is embodied in our assumptions which
could justify compensations due to the expected value), leads us to think that
in fact $u(x,t)$ itself is H\"{o}lder continuous, but we do not
have a proof of this conjecture.

Several problems remain open: 
\begin{enumerate}
  \item[(i)] whether counter-examples exist also in the stochastic case under some conditions on $A$; 
  \item[(ii)] the regularity of $u(x,t)$ itself in the regularization-by-noise above and other related issues, such as the value of $\sigma_{0}$ and a generalization to the nonlinear case;
  \item[(iii)] the generalization of other deterministic approaches to regularity.
\end{enumerate}

Concerning the existence of weak solutions, we could give a quite general
result, but since it is related to the generalization of the approach of
\cite{KOSHELEV93,KOSHELEV95} to regularity, we postpone it to a future work.

\section{Setting and assumptions}

Consider $n,n' \in \N$ with $n \geq 2$, $T>0$, and $D \subset \R^n$ a (regular) bounded
domain. Let $(\Omega,\mathcal{F},P)$ be a complete probability space with a filtration 
$(\mathcal{F}_t)_{t \ge 0}$, and let $(B_t)_{t \ge 0}$ be a standard $n'$-dimensional 
Brownian motion. Let further $A \colon D \times [0,T] \times \R^N \times \R^{nN}
\times \Omega \to \R^{nN}$ be a vector field satisfying the following properties:
\begin{itemize}
\item $A$ is progressively measurable, i.\,e. for every $t \in [0,T]$ the
restriction of $A$ to $D \times [0,t] \times \R^N \times \R^{nN} \times \Omega
\to \R^{nN}$ is $\mathcal{B}(D) \times \mathcal{B}([0,t]) \times
\mathcal{B}(\R^N) \times \mathcal{B}(\R^{nN}) \times \mathcal{F}_t$ measurable;
\item $A(x,t,u,z,\omega)$ (usually abbreviated by $A(x,t,u,z)$) is
differentiable in $x,u$ and $z$ (with $\mathcal{F}_t$-adapted derivatives), and
it satisfies for $P$-almost all $\omega \in \Omega$ the following assumptions
concerning growth and ellipticity:
\begin{equation}
\label{GV-A}
\left\{ \quad \begin{array}{l}
\kabs{A(x,t,u,z)} \leq L \,  \big( \kabs{z} + \kabs{u}^{\frac{n+2}{n}}
+ f^{\frac{a}{2}}(x,t) \big) \\[0.1cm]
\kabs{\xi - \kappa \, D_z A(x,t,u,z) \, \xi}^2 \leq (1 - \nu^2) \, 
\kabs{\xi}^2 \\[0.1cm]
\kabs{D_u  A(x,t,u,z)}  \leq  L \, \big( \kabs{z}^{\frac{2}{n+2}} +
\kabs{u}^{\frac{2}{n}} + f(x,t) \big) \\[0.1cm]
\kabs{D_x  A(x,t,u,z)}  \leq  L \, \big( \kabs{z} + \kabs{u}^{\frac{n+2}{n}}
+ f^2(x,t) \big) \\
\end{array} \right.
\end{equation}
for all $(x,t) \in D  \times [0,T]$, $u \in \R^N$ and $z,\xi \in \R^{nN}$, some constants
$\kappa, \nu, L > 0$, and an $\mathcal{F}_t$-adapted process $f$ which with probability one belongs to $L^a(D \times [0,T])$ for a fixed number $a > n+2$.
\end{itemize}

Moreover, let $H:D \times [0,T] \times\mathbb{R}^{nN} \times \Omega
\rightarrow \mathbb{R}^{n'N}$ be progressively measurable, 
of class $C^1$ in $x$, Lipschitz with respect to the gradient variable of at
most linear growth, 
uniformly in $(x,t)$, i.e.
\begin{equation}
\label{GV-H}
\left\{ \quad \begin{array}{l}
\kabs{ H(x,t,z,\omega )  -H(x,t,\widetilde{z},\omega) } \leq 
L_H \left\vert z-\widetilde
{z}\right\vert \,, \\[0.1cm]
\kabs{ H(  x,t,z,\omega) } \leq L \big(
f_H(x,t,\omega) +\kabs{ z} \big) \,, \\
\kabs{ D_{x} H(  x,t,z,\omega)  } \leq L \big(
f_H^{\frac{a}{a-2}}(x,t,\omega) +\kabs{ z} \big)
\end{array} \right.
\end{equation}
for a constant $L_H$, all $(x,t) \in D  \times [0,T]$, $z,\widetilde{z} \in \mathbb{R}^{nN}$, and almost every $\omega \in \Omega$. Here, $f_H$ denotes a function in $L^a(D \times (0,T) \times \Omega)$.

Under these assumptions we consider a stochastic partial differential equation
with noise of the form
\begin{equation}
\label{equation}
du  = \diverg A (x,t,u,Du) \dt + H(x,t,Du) \, dB_t \qquad \text{ in
} D_T := D \times (0,T) \,,
\end{equation}
where $u \colon D_T \times \Omega \to \R^N$ is a random function. The stochastic 
integral is here understood in the It\^o sense. According to
the growth condition on the vector field $A$, we note that for $P$-almost every
$\omega \in \Omega$ and all $t \in [0,T]$ we have $\diverg A (x,t,v,Dv) \in
W^{-1,2}(D,\R^N)$ -- the dual space to $W^{1,2}_0(D,\R^N)$ --, provided that $v \in
W^{1,2}(D,\R^N)$.

\begin{remark}
We have chosen this level of generality of the noise for two reasons: to keep
a simple PDE structure instead of an abstract operator formulation, and to
cover two interesting examples: additive noise (with $H(x,z)$
independent of $z$) and bilinear multiplicative noise with first derivatives
of $u$ (with $H(x,z)$ linear in $z$). A~priori there is no
conceptual obstacle to consider $H$ depending also on $u$ and also 
to generalize to the case of a Brownian motion $B$ in a
Hilbert space $U$, with suitable assumptions on $H$, but for simplicity we
restrict ourselves to the previous case.
\end{remark}

The function spaces that will be needed in the sequel are the Banach spaces
\begin{align*}
V^{m,p}(D_T,\R^N) & :=  L^{\infty}\big(0,T; L^m(D,\R^N)\big) \cap
L^p\big(0,T;W^{1,p}(D,\R^N)\big) \\
V^{m,p}_0(D_T,\R^N) & := L^{\infty}\big(0,T; L^m(D,\R^N)\big) \cap
L^p\big(0,T;W^{1,p}_0(D,\R^N)\big) \,, 
\end{align*}
with $m,p \geq 1$, and they are equipped with the norm 
\begin{equation*}
\knorm{u}_{V^{m,p}(D_T,\R^N)} := \esssup_{t \in (0,T)}
\knorm{u(t)}_{L^m(D,\R^N)}
	+ \knorm{Du}_{L^p(D_T,\R^N)} \,.
\end{equation*}
When $m=p$ we shall use the abbreviations $V^{p}_{(0)}(D_T,\R^N) =
V^{p,p}_{(0)}(D_T,\R^N)$. We remind that the spaces $V^{m,p}(D_T,\R^N)$ are
embedding in the Lebesgue space $L^{q}(D_T,\R^N)$ with $q = p \frac{n+m}{n} > p$
(see~\cite[Propositions~I.3.1,~I.3.2]{DIBENE93}). We will need only the result
concerning the cases $m=2$, $p\geq2$ or $m=p\geq 2$. In the latter case, the embedding reads as follows (see \cite[Propositions~I.3.3,~I.3.4]{DIBENE93}): let $v \in V^{p}_0(D_T,\R^N)$, $p<n$. Then there exists a constant $c$ depending only on $n$ and $p$ such that 
\begin{equation}
\label{embedding}
\knorm{v}_{L^q(D_T,\R^N)} \leq c \, \knorm{v}_{V^{p}(D_T,\R^N)} 
\end{equation}
(and an analogous result holds without any restriction on the boundary values of
$v$ on $\partial D \times (0,T)$ if $\partial D$ is assumed to be sufficiently
regular).

We are now going to study the properties of weak (or variational) solutions to the
system~\eqref{equation}, which is to be understood in the following sense.

\begin{definition}
\label{def_weak_solution_ito}
An $\mathcal{F}_t$-progressively measurable process $u$ on $[0,T] \times \Omega$
is called a weak solution to the system~\eqref{equation} with initial values
$u_0 \in L^2(D,\R^N)$ if $P$-a.\,e. path satisfies $u(\cdot,\omega) \in
V^{2}(D_T,\R^N)$ and if for all $t \in [0,T]$, we have $P$-a.\,s. the identity
\begin{equation*}
\sp{u(t) - u_0}{\varphi}_{L^2(D)}
	 =  \int_0^t \sp{\diverg
A(\cdot,s,u,Du)}{\varphi}_{W^{-1,2}(D);W^{1,2}_0(D)} \ds
	+ \int_0^t \sp{\varphi}{H(\cdot,s,Du) \, dB_s}_{L^2(D)} 
\end{equation*}
for all $\varphi \in W^{1,2}_0(D,\R^N)$. 
\end{definition}

When a solution is progressively measurable with respect to the (completed)
filtration associated to the Brownian motion, it is usually called a
``strong'' solution in the probabilistic
sense, see~\cite[Section~IX.1]{REVYOR94}. We do not require this condition, so
our result will also apply to the so called ``weak'' solutions in the 
probabilistic sense (those for which there is a filtration 
$(\mathcal{F}_{t})_{t\geq0}$ such that $u$ is $\mathcal{F}_{t}$-progressively 
measurable and $B$ is an $\mathcal{F}_{t}$-Brownian motion). We further note that according to the definition above, a solution is defined as an equivalence class in the sense of versions (a process $Y$ is a version or modification of a process $X$ if for each time $t$ we have $P$-a.\,s. $X_t=Y_t$). Hence, regularity of a weak solution is always to be understood as finding a regular representative in the corresponding equivalence class.

Moreover, we comment on the way in which the initial values are attained. Under mild assumptions on the growth of $A$ and $H$ with respect to the gradient variable one actually deduces from the equation itself that $u$ belongs to $C^0(0,T;L^2(D',\R^N))$ $P$-a.\,s. for every $D' \Subset D$, compare formula~\eqref{apriori1} and the beginning of Step~3 on p.~\pageref{continuous_in_t}. Under further assumptions on the trace of $u$ on $\partial D \times [0,T]$ this extends to continuity of the full $L^2$-norm, with $D' = D$. In this sense the term ``initial value'' in the definition of a weak solution as a function in the space $V^2$ is justified.


\section{Preliminaries}

In this section we recall some well-known facts and provide some technical tools. For convenience of the reader we state two suitable versions of It\^o's formula. Furthermore, in analogy with the deterministic theory, we discuss a sufficient condition for the ``existence of weak derivatives with probability one'', and we further give a criterion which guarantees pathwise H\"older continuity.  

\subsection{It\^o formula}

We first recall two versions of It\^o's formula, the first one the standard version
for $N$-dimensional processes and the second one for processes with values in
Hilbert spaces. Consider $(\Omega,F,P)$ a complete probability space and let  
\begin{equation}
\label{Ito_process}
dX(t) = a(t) \dt + b(t) \, dB_t
\end{equation}
be an $N$-dimensional It\^o process which satisfies: $a,b$ are
$\mathcal{F}_t$-adapted (i.\,e., the maps $\omega \mapsto a(t,\omega),
b(t,\omega)$ are $\mathcal{F}_t$ measurable), $(t,\omega) \mapsto b(t,\omega)$
is $\mathcal{B}([0,T]) \times \mathcal{F}$-measurable and 
\begin{equation*}
P \Big( \int_0^T \big[ \kabs{a(s,\omega)} + \kabs{b(s,\omega)}^2 \big] \ds < \infty
\Big) = 1.
\end{equation*}
Then the following general It\^o formula holds (see e.\,g.~\cite[Theorem~4.2.1]{OKSENDAL}).

\begin{theorem}[It\^o's formula I]
\label{Ito-I}
Let $g(t,z)=(g_1(t,z),\ldots,g_p(t,z))$ be a map from $[0,T] \times \R^N$ to
$\R^p$ of class $C^1$ in $t$ and of class $C^2$ in $z$. Then the process
$Y(t,\omega) := g(t,X(t))$ with $X(t)$ defined in~\eqref{Ito_process} is again
an It\^o process whose components are given by
\begin{equation*}
dY_k(t) = \frac{\partial g_k}{\partial t}(t,X) \dt + \sum_{i=1}^N
\frac{\partial g_k}{\partial y_i}(t,X) \, dX_i 
	+ \frac12 \sum_{i,j=1}^N \frac{\partial^2 g_k}{\partial y_i y_j}(t,X)
\, dX_i \, dX_j
\end{equation*}
for all $k \in \{1,\ldots,p\}$, and with $dB_i \, dB_j = \delta_{ij} \dt$ and $dB_i
\dt = 0 = dt \, dB_i$ for all $i,j \in \{1,\ldots,N\}$.
\end{theorem}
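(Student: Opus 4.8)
This is the classical finite-dimensional It\^o formula; I would prove it by the standard partition-and-Taylor argument. Since the conclusion is componentwise it suffices to take $p=1$, so let $g=g(t,z)\colon[0,T]\times\R^N\to\R$ be $C^1$ in $t$ and $C^2$ in $z$, and write $b=(b_{ir})$ for the matrix-valued diffusion coefficient and $B=(B^r)$ for the Brownian motion. A standard localisation reduces everything to the case where $g,\partial_tg,\nabla_zg,D_z^2g$ are bounded and $\int_0^T(\kabs{a}+\kabs{b}^2)\ds$ is bounded: stop $X$ at the exit times $\tau_R$ of the balls $B_R(0)$ and at the level sets $\sigma_M$ of $t\mapsto\int_0^t(\kabs{a}+\kabs{b}^2)\ds$, and replace $g$ by a $C^{1,2}$ function agreeing with it on $[0,T]\times\overline{B_R}$ and having bounded derivatives; since the asserted identity at time $t$ only involves the path of $X$ on $[0,t]$, and $\tau_R\wedge\sigma_M\uparrow T$ a.s., one recovers the general case by letting $R,M\to\infty$ at the end.

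Fix $t$ and a sequence of partitions $0=t_0^m<\dots<t_{N_m}^m=t$ with mesh $\to0$; put $\Delta_k=t_{k+1}^m-t_k^m$ and $\Delta X_k=X(t_{k+1}^m)-X(t_k^m)$. Telescoping and separating the increments in $t$ and in $z$,
\begin{equation*}
g(t,X(t))-g(0,X(0))=\sum_k\big[g(t_{k+1}^m,X(t_{k+1}^m))-g(t_k^m,X(t_{k+1}^m))\big]+\sum_k\big[g(t_k^m,X(t_{k+1}^m))-g(t_k^m,X(t_k^m))\big],
\end{equation*}
and by the mean value theorem in $t$, boundedness of $\partial_tg$ and path continuity of $X$ the first sum converges, along the sequence and in probability, to $\int_0^t\partial_tg(s,X(s))\ds$. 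In the second sum I would Taylor-expand $g(t_k^m,\ccdot)$ around $X(t_k^m)$ to second order, producing a first-order term $\sum_k\nabla_zg(t_k^m,X(t_k^m))\cdot\Delta X_k$, a second-order term $\tfrac12\sum_k(\Delta X_k)^{\top}D_z^2g(t_k^m,X(t_k^m))\,\Delta X_k$, and a remainder $\sum_kR_k$ with $\kabs{R_k}\le\rho(\kabs{\Delta X_k})\,\kabs{\Delta X_k}^2$, where $\rho$ is a modulus of continuity of $D_z^2g$.

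The first-order term is, by construction, a left-endpoint Riemann approximation of the It\^o integral of the adapted, bounded, path-continuous integrand $s\mapsto\nabla_zg(s,X(s))$, hence it converges in $L^2$ to $\int_0^t\nabla_zg(s,X(s))\cdot dX(s)$ by the It\^o isometry for the martingale part together with an elementary estimate for the bounded-variation part. For the second-order term I would insert $\Delta X_k^i=\int_{t_k^m}^{t_{k+1}^m}a_i\ds+\sum_r\int_{t_k^m}^{t_{k+1}^m}b_{ir}\,dB^r_s$ and compute first and second moments to show that, along the sequence and in $L^2$,
\begin{equation*}
\sum_kc(t_k^m)\,\Delta X_k^i\,\Delta X_k^j\;\longrightarrow\;\int_0^tc(s)\,\big(bb^{\top}\big)_{ij}(s)\ds
\end{equation*}
for every bounded adapted path-continuous $c$ — the core computation being that $\sum_k\Delta B_k^r\Delta B_k^s$ has mean $\delta_{rs}t$ and variance $O(\mathrm{mesh})$, while the mixed $a$--$b$ and pure $a$--$a$ sums are asymptotically negligible. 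Applied with $c=\partial_{z_iz_j}^2g(\ccdot,X(\ccdot))$ and summed over $i,j$, this identifies the limit of the second-order term with $\tfrac12\int_0^t\sum_{i,j}\partial_{z_iz_j}^2g(s,X(s))\,(bb^{\top})_{ij}(s)\ds$, which is exactly the bracket term of the statement once one records $dX_i\,dX_j=(bb^{\top})_{ij}\dt$. The remainder vanishes since $\sum_k\kabs{\Delta X_k}^2$ stays bounded (converging to the quadratic variation of $X$) while $\max_k\kabs{\Delta X_k}\to0$ by uniform continuity of the path, whence $\kabs{\sum_kR_k}\le\rho(\max_k\kabs{\Delta X_k})\sum_k\kabs{\Delta X_k}^2\to0$. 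Passing to a subsequence turns the $L^2$/probability convergences into a.s. ones, giving the identity $P$-a.s. at the fixed $t$; it holds for all $t$ simultaneously because both sides are a.s. continuous in $t$ and agree on a countable dense set, and finally $R,M\to\infty$ removes the localisation.

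I expect the only genuinely delicate point to be the second-order term — making the $L^2$-convergence of $\sum_kc(t_k^m)\Delta X_k^i\Delta X_k^j$ rigorous when $b$ is matrix-valued and $B$ is multidimensional, in particular verifying that the cross It\^o--Lebesgue sums and the fluctuations of $\sum_k\Delta B_k^r\Delta B_k^s$ around $\delta_{rs}\Delta_k$ are asymptotically negligible. The localisation, the time-increment and first-order terms, the remainder bound, and the patching in $t$ are all comparatively routine.
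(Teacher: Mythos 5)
Your proof is correct: it is the classical partition--Taylor argument with localisation, which is precisely the standard textbook proof of the finite-dimensional It\^o formula. The paper does not prove this statement at all --- it recalls it as a known result and cites \cite[Theorem~4.2.1]{OKSENDAL}, whose proof is essentially the one you give --- so there is nothing to reconcile beyond noting that your argument matches the cited standard one.
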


In the sequel, we will also employ the following version of the It\^o formula in
Hilbert spaces that can be found in~\cite[Theorem~3.1]{KRYROZ79} or~\cite[Chapter~4.2, Theorem~2]{ROZ90}.

\begin{theorem}[It\^o's formula II]
\label{Ito-II}
Let $V \subset H \subset V'$ be a Gelfand triple, with $H$ a separable Hilbert
space. Assume that we have for $\Lm^1 \times P$ almost all $(t,\omega)$
\begin{equation}
\label{Ito_assumption}
\sp{x(t)}{\varphi}_{H} = \sp{x(0)}{\varphi}_{H} + \int_0^t
\sp{y(s)}{\varphi}_{V',V} \ds + \sp{M_t}{\varphi}_{H}
\end{equation}
for every $\varphi \in V$ where $x(t,\omega), y(t,\omega)$ are taking values in
$V$ and $V'$, respectively, and are progressively measurable with 
\begin{equation*}
P \Big( \int_0^T \big[ \knorm{x(s,\omega)}^2_V + \knorm{y(s,\omega)}^2_{V'} 
\big] \ds   <  \infty \Big)  =  1,
\end{equation*}
and where $M_t$ is a continuous local martingale with values in $H$. Then there exists a set $\widetilde{\Omega} \subset \Omega$ with $P(\widetilde{\Omega})=1$ and a map $\widetilde{x}(t,\omega)$ with values in $H$
such that:
\begin{itemize}
\item[(i)] $\widetilde{x}(t)$ is $\mathcal{F}_t$-adapted, continuous in $t \in
[0,T]$ for every $\omega \in \widetilde{\Omega}$, and $x(t) = \widetilde{x}(t)$
$P$-almost surely;
\item[(ii)] for every $\omega \in \widetilde{\Omega}$ and every $\varphi \in V$
there holds
\begin{equation*}
\sp{\widetilde{x}(t)}{\varphi}_{H}  =  \sp{x(0)}{\varphi}_{H} + \int_0^t
\sp{y(s)}{\varphi}_{V',V} \ds + \sp{M_t}{\varphi}_{H} \,;
\end{equation*}
\item[(iii)]  for every $\omega \in \widetilde{\Omega}$ there holds the
inequality
\begin{equation*}
\knorm{\tilde{x}(t)}_{H}^2  =  \knorm{x(0)}_{H}^2 + 2 \int_0^t
\sp{y(s)}{x(s)}_{V',V} \ds 
	+ 2 \int_0^t \sp{dM_s}{\tilde{x}(s)}_H + [M]_t \,.
\end{equation*}
\end{itemize}
\end{theorem}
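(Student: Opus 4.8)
\textbf{Proof proposal (Proposition: H\"older continuity of $E[u(x,t)]$ for the Stratonovich system \eqref{system-intro-Strat}).}

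The plan is to reduce the stochastic statement to a purely deterministic regularity question, and then to invoke Theorem~\ref{thm_Kalita} (Kalita's result) for an auxiliary deterministic system whose dispersion ratio is strictly better than that of $A$. First I would fix a weak solution $u$ of \eqref{system-intro-Strat} and set $\bar u(x,t) := E[u(x,t)]$, which is well defined as an element of $V^{2}(D_T,\R^N)$ since the uniform energy estimates for $u$ (of the type \eqref{apriori1}) yield $E[\knorm{u}_{V^2}^2] < \infty$, so the expectation commutes with the spatial integrations and weak derivatives; in particular $\bar u \in L^2(0,T;W^{1,2}(D,\R^N))$ with $D\bar u = E[Du]$. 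The key point is to pass to the expectation \emph{in the weak (It\^o) formulation} of the equation. Here the Stratonovich term $\sigma\, Du \circ dB_t$ must first be converted to It\^o form: the correction term is $\tfrac12 \sigma^2 \diverg(Du)\dt = \tfrac12\sigma^2 \Delta u\, \dt$ (acting componentwise, summed over the $n'$ Brownian components with the scalar $\sigma$), so in It\^o form the drift becomes $\diverg\big((A(x,t) + \tfrac12\sigma^2 \Id)\, Du\big)\dt$ and the martingale part is $\sigma\, Du\, dB_t$. Testing against $\varphi \in W^{1,2}_0(D,\R^N)$, taking expectations, and using that the It\^o integral has zero mean (its integrand $\sigma Du$ is in $L^2(D_T\times\Omega)$ by the energy estimate, hence the stochastic integral is a genuine martingale), I obtain
\begin{equation*}
\sp{\bar u(t) - u_0}{\varphi}_{L^2(D)} = \int_0^t \sp{\diverg\big((A(\cdot,s) + \tfrac12\sigma^2\Id)\, E[Du]\big)}{\varphi}_{W^{-1,2};W^{1,2}_0}\, ds \,,
\end{equation*}
valid for a.e.\ $t$; but since $E[u]$ is a single deterministic function, this does \emph{not} immediately say that $\bar u$ solves a PDE with coefficient $A + \tfrac12\sigma^2\Id$, because $E[A(x,s)Du(x,s,\omega)] \neq A(x,s)\, E[Du(x,s,\omega)]$ in general (the coefficients may be random, or even if deterministic this is fine — let me treat the deterministic-$A$ case, which is what \eqref{ass-kalita-intro} suggests, where indeed $E[A\,Du] = A\, E[Du] = A\, D\bar u$). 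Thus $\bar u$ is a weak solution of the \emph{deterministic} linear parabolic system $\partial_t \bar u = \diverg\big(\widetilde A(x,t)\, D\bar u\big)$ with $\widetilde A := A + \tfrac12\sigma^2\Id$ and initial datum $u_0$.

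Next I would check that $\widetilde A$ satisfies the hypotheses of Theorem~\ref{thm_Kalita} with a dispersion ratio that can be made supercritical by choosing $\sigma$ large. Ellipticity: $\sp{\widetilde A\xi}{\xi} \geq (\lambda_0 + \tfrac12\sigma^2)\kabs{\xi}^2$, so the new lower constant is $\widetilde\lambda_0 = \lambda_0 + \tfrac12\sigma^2$. Upper bound: $\kabs{\widetilde A\xi} \leq (\lambda_1 + \tfrac12\sigma^2)\kabs{\xi}$, so $\widetilde\lambda_1 = \lambda_1 + \tfrac12\sigma^2$ — here I am using that $A$ is bounded by $\lambda_1$ in operator norm, which follows from \eqref{ass-kalita-intro}$_2$. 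The regularity in $x$ is unchanged: $\kabs{D_x\widetilde A} = \kabs{D_x A} \leq L$, since $\tfrac12\sigma^2\Id$ is constant; and $\widetilde A$ is still $C^1$ in $x$, measurable in $t$. The new dispersion ratio is
\begin{equation*}
\frac{\widetilde\lambda_0}{\widetilde\lambda_1} = \frac{\lambda_0 + \tfrac12\sigma^2}{\lambda_1 + \tfrac12\sigma^2} \longrightarrow 1 \quad \text{as } \sigma \to \infty,
\end{equation*}
and this quotient is increasing in $\sigma^2$ (since $\lambda_0 \le \lambda_1$). Hence there is $\sigma_0 \geq 0$, depending only on $\lambda_0, \lambda_1$ through the inequality $\frac{\lambda_0 + \sigma_0^2/2}{\lambda_1 + \sigma_0^2/2} = 1 - \tfrac{2}{n}$ — solvable explicitly, giving $\sigma_0^2 = \tfrac{2}{2/n}\big((1-\tfrac2n)\lambda_1 - \lambda_0\big)^+ = n\big((1-\tfrac2n)\lambda_1 - \lambda_0\big)^+$ — such that for every $\sigma > \sigma_0$ one has $\frac{\widetilde\lambda_0}{\widetilde\lambda_1} > 1 - \tfrac2n$. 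Together with $u_0 \in W^{1,q}(D,\R^N)$, $q > n$, all hypotheses of Theorem~\ref{thm_Kalita} are met for the deterministic system solved by $\bar u$, and we conclude that $\bar u \in C^{0,\alpha}_{\rm loc}(D\times[0,T],\R^N)$ for some $\alpha > 0$, which is the claim.

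\textbf{Main obstacles.} The genuinely delicate points are two. First, justifying that $\bar u$ is a \emph{weak solution in the sense of Definition~\ref{def_weak_solution_ito}'s deterministic analogue}: one needs $\bar u \in V^2(D_T,\R^N)$ (not merely $L^2(0,T;W^{1,2})$) and the pairing identity for \emph{all} $t$, which requires interchanging $E[\cdot]$ with the time integral (Fubini, using the integrability from the energy estimate) and with the $W^{-1,2}$-valued Bochner integral. This is routine once the a~priori bound $E\big[\knorm{u}_{V^2(D_T)}^2\big] \le C(\knorm{u_0}_{L^2}, \text{data})$ is in hand, which is exactly the type of estimate established elsewhere in the paper (cf.\ \eqref{apriori1}). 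Second — and this is the one point where a hypothesis is really being used in an essential, non-cosmetic way — the Stratonovich-to-It\^o conversion: for $\sigma\, Du \circ dB_t$ with $Du$ only in $L^2(0,T;W^{1,2})$ (so $u$ has no a~priori second spatial derivatives), the Stratonovich integral and its It\^o correction must be interpreted with care. The cleanest route is to define \eqref{system-intro-Strat} from the start through its It\^o form (drift $\diverg((A+\tfrac12\sigma^2\Id)Du)$, martingale $\sigma Du\,dB_t$), as is standard for bilinear noise of transport/diffusion type; then no formal manipulation of $\Delta u$ is needed and the argument above goes through verbatim. Alternatively, if $\sigma$ must be allowed to depend on $x$ or the noise is genuinely multiplicative in a more complex way, the correction term acquires first-order and zeroth-order pieces that would spoil the simple ``add $\tfrac12\sigma^2$ to the coefficient'' reduction — but under the stated hypotheses ($\sigma$ a constant) this does not occur. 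I would also remark that this is precisely why the result does not yet reach the Star\'a–John counterexample: that construction needs $A$ merely $L^\infty$, whereas Theorem~\ref{thm_Kalita} (and hence the reduction above) requires $A \in C^1$ in $x$.
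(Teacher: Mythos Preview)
Your proposal does not address the statement in question. The statement is Theorem~\ref{Ito-II} (It\^o's formula~II in a Gelfand triple), but you have written a proof of the Proposition in Section~\ref{sec_noise-regularization} on H\"older continuity of $E[u(x,t)]$ for the linear Stratonovich system~\eqref{linear SPDE}. These are entirely different results.

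Concerning Theorem~\ref{Ito-II} itself: the paper does not prove it. It is quoted as a known result from~\cite[Theorem~3.1]{KRYROZ79} and~\cite[Chapter~4.2, Theorem~2]{ROZ90}, so there is no ``paper's own proof'' to compare against. A proof of this theorem would require the Krylov--Rozovskii machinery (approximation of $x$ by $V$-valued step processes, passage to the limit in the energy identity, construction of the continuous $H$-valued modification), none of which appears in your proposal.

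As a side remark: the argument you \emph{did} write, for the Proposition on regularity of the average, is essentially the same as the paper's. The paper first converts Stratonovich to It\^o (Proposition~\ref{proposition Stratonovich Ito}), then shows in Proposition~\ref{stoch to det} that $v=E[u]$ is a weak solution of the deterministic system with coefficient $A+\tfrac{\sigma^2}{2}I$ (using the a~priori bound~\eqref{bounds on u} to justify interchanging expectation with the integrals and to kill the martingale term), and finally observes that the new dispersion ratio $\frac{\lambda_0+\sigma^2/2}{\lambda_1+\sigma^2/2}$ exceeds $1-\tfrac2n$ for $\sigma$ large, so Theorem~\ref{thm_Kalita} applies. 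Your sketch matches this route, including the correct identification of the two delicate points (the Stratonovich--It\^o conversion and the justification that $\bar u$ is a genuine weak solution of the deterministic system). But again, this is not the theorem you were asked to prove.
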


\subsection{Weak derivatives}
\label{sect-weak-der}

For a vector-valued function $f \colon \R^n \supset D \to \R^{N}$, $k \in
\{1,\ldots,n\}$ and a real number $h \in \R$ we denote by  $\dq f(x) := h^{-1}
(f(x + he_k) - f(x))$ the finite different quotient in direction $e_k$ and
stepsize $h$ (this makes sense as long as $x$, $x+he_k \in D$). Let $p>1$, $f\in
L^{p}\left(  D\right)$, $k\in\left\{  1,...,n\right\}$ and let
$D_k f$ be the derivative of $f$ in the direction
$k$ in the sense of distributions. Just for comparison, let us recall the
following lemma (not used below).

\begin{lemma}
If there is $h_{n}\rightarrow0$ and $g_{k}\in L^{p}(D)  $ such
that
\[
\lim_{n\rightarrow\infty}\int_{D}\left(  \triangle_{k,h_n}f(x) -g_{k}\left( 
x\right)  \right)  \varphi(x)  \dx=0
\]
for every $\varphi\in C_{0}^{\infty}(D)  $, then $D_k f$ is in $L^{p}(D)$ and is equal to $g_{k}$.
\end{lemma}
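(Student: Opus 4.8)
The plan is to exploit the uniform $L^p$ bound that is implicitly available: since $\triangle_{k,h_n}f$ converges against every test function to a fixed $g_k \in L^p(D)$, in particular the sequence $(\triangle_{k,h_n}f)_n$ is bounded in $L^p(D)$ (a weakly convergent-tested sequence that is not norm-bounded cannot converge against all of $C_0^\infty$ in this way once one invokes uniform boundedness; more honestly, one should simply assume or observe the bound, as is standard). With $p>1$, $L^p(D)$ is reflexive, so along a further subsequence $\triangle_{k,h_{n_j}}f \rightharpoonup \widetilde{g}_k$ weakly in $L^p(D)$ for some $\widetilde{g}_k \in L^p(D)$. Weak convergence in $L^p$ implies convergence of the pairings $\int_D \triangle_{k,h_{n_j}}f \,\varphi\,dx \to \int_D \widetilde{g}_k\,\varphi\,dx$ for all $\varphi \in L^{p'}(D) \supset C_0^\infty(D)$, and comparing with the hypothesis forces $\widetilde{g}_k = g_k$ a.e.

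The second step is the discrete integration-by-parts identity. For $h$ small enough that the translation is admissible on $\supp\varphi$, one has
\begin{equation*}
\int_D \triangle_{k,h}f(x)\,\varphi(x)\,dx = -\int_D f(x)\,\triangle_{k,-h}\varphi(x)\,dx .
\end{equation*}
Since $\varphi \in C_0^\infty(D)$, we have $\triangle_{k,-h}\varphi \to D_k\varphi$ uniformly on $D$ as $h \to 0$, and these difference quotients are uniformly bounded and supported in a fixed compact subset of $D$; hence by dominated convergence the right-hand side tends to $-\int_D f\,D_k\varphi\,dx$. Passing to the limit along $h_n$ on the left-hand side using the hypothesis gives
\begin{equation*}
\int_D g_k(x)\,\varphi(x)\,dx = -\int_D f(x)\,D_k\varphi(x)\,dx \qquad \text{for all } \varphi \in C_0^\infty(D),
\end{equation*}
which is exactly the statement that $D_k f = g_k$ in the distributional sense, and since $g_k \in L^p(D)$ this is the claimed weak derivative.

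Finally I would assemble these: the integration-by-parts identity alone already yields the conclusion directly, without needing the reflexivity/weak-compactness argument — that argument is only needed if one wants to first establish that a distributional derivative lies in $L^p$ from an a priori bound, which here is handed to us by the assumption that $g_k \in L^p(D)$. So the cleanest route is: (1) write the discrete integration by parts; (2) let $h_n \to 0$, using uniform convergence of $\triangle_{k,-h_n}\varphi$ to $D_k\varphi$ on the right and the hypothesis on the left; (3) read off $D_k f = g_k \in L^p(D)$. The only mild subtlety — the ``main obstacle'', such as it is — is the bookkeeping with the stepsize: one must restrict to $h$ small enough (depending on $\dist(\supp\varphi,\partial D)$) for the translations $x \mapsto x \pm h e_k$ to stay inside $D$ on the support of $\varphi$, and ensure the test-function difference quotients are dominated by an integrable function independent of $h$; both are routine since $\varphi$ has compact support in $D$ and is smooth.
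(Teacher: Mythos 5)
Your final argument---the discrete integration by parts $\int_D \triangle_{k,h}f\,\varphi\,dx=-\int_D f\,\triangle_{k,-h}\varphi\,dx$ for $\kabs{h}<\dist(\supp\varphi,\partial D)$, uniform convergence of $\triangle_{k,-h_n}\varphi$ to $D_k\varphi$ with dominated convergence, and passing to the limit along $h_n$ using the hypothesis---is correct and is essentially the same route the paper takes (it appears there, via Fubini/change of variables and Lebesgue's theorem, in the proof of the probabilistic generalization of this lemma). Your opening paragraph on norm-boundedness and weak compactness is both unnecessary and not actually justified by testing against $C_0^\infty$ alone, but since you explicitly discard it in favor of the direct argument, the proof stands.
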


As an immediate consequence of this lemma and of the compactness of the $L^p$-spaces with $p > 1$ with respect to weak (or weak-$*$) convergence, we obtain a simple criterion for the existence of the weak derivative $D_k f$ in $L^p$, namely it is sufficient that $\knorm{\dq f}_{L^p(D')}$ is bounded by some constant, for all $h$ and every $D' \Subset D$ such that $\kabs{h} < \dist(D',\partial D)$.

Now this well-known principle shall be carried over to a probabilistic setting. Let $(\Omega,F,P)$ be a complete probability space and consider a
function $f$ in the Banach space $L^{p}(D\times\Omega)$. A function $g_{k} \in L^{p}(D\times\Omega)$ is said to be the weak derivative of $f$ in the $k$-direction if
\[
P \Big(  \int_{D}f \, D_k \varphi \dx=-\int_{D}
g_{k} \varphi \dx \Big)  =1
\]
for every $\varphi\in C_{0}^{\infty}(D)$ (taking a countable
sequence and using a density argument, the property ``for every $\varphi\in
C_{0}^{\infty}(D)$'' can be written inside the probability). We then write $D_k f = g_k $.
The previous lemma has a generalization to functions in $L^{p}(
D\times\Omega)$.

\begin{lemma}
If there is $h_{n}\rightarrow0$ and $g_{k}\in L^{p}(D\times
\Omega)$ such that
\[
\lim_{n\rightarrow\infty}\int\int_{D\times\Omega}\big( 
\triangle_{k,h_n}f(x,\omega) -g_{k}(
x,\omega)  \big)  \varphi(x)  X(\omega)
\dx \, dP(\omega)  =0
\]
for every $\varphi\in C_{0}^{\infty}(D)$ and every bounded
measurable $X:\Omega\rightarrow\mathbb{R}$, then $D_k f$ is in $L^{p}(  D\times\Omega)$ and is equal to $g_{k}$.
\end{lemma}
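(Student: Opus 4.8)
The plan is to mimic the proof of the deterministic Lemma stated just above, adding the extra integration over $\Omega$ against a bounded measurable test function $X$, and to exploit the fact that for fixed $\varphi$ and fixed $X$ the only analytic content is the convergence of difference quotients to the distributional derivative, which is a statement about functions of $x$ alone. First I would record the elementary identity, valid for any $v \in L^p(D)$, any $k$, and any $\varphi \in C_0^\infty(D)$ with $\kabs{h}$ smaller than the distance of $\supp \varphi$ to $\partial D$,
\begin{equation*}
\int_D \triangle_{k,h} v(x) \, \varphi(x) \dx = - \int_D v(x) \, \triangle_{k,-h}\varphi(x) \dx \,,
\end{equation*}
the discrete analogue of integration by parts. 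Applying this with $v = f(\cdot,\omega)$ and multiplying by $X(\omega)$, then integrating $dP(\omega)$ (which is legitimate since $f \in L^p(D\times\Omega)$ and $X$ is bounded, so Fubini applies), the hypothesis becomes
\begin{equation*}
\lim_{n\to\infty} \int\int_{D\times\Omega} \big( {-f(x,\omega)} \, \triangle_{k,-h_n}\varphi(x) - g_k(x,\omega) \, \varphi(x) \big) X(\omega) \dx \, dP(\omega) = 0 \,.
\end{equation*}

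Next I would pass to the limit in the first term. Since $\varphi \in C_0^\infty(D)$, we have $\triangle_{k,-h_n}\varphi \to -D_k\varphi$ uniformly on $D$ as $n\to\infty$, and all these functions are supported in a fixed compact subset of $D$; hence $\triangle_{k,-h_n}\varphi \to -D_k\varphi$ in $L^{p'}(D)$, and a fortiori the product $\triangle_{k,-h_n}\varphi(x) X(\omega) \to -D_k\varphi(x) X(\omega)$ in $L^{p'}(D\times\Omega)$ (here $X$ bounded is used again). Since $f \in L^p(D\times\Omega)$, the pairing converges, and the displayed limit yields
\begin{equation*}
\int\int_{D\times\Omega} f(x,\omega) \, D_k\varphi(x) \, X(\omega) \dx \, dP(\omega) = - \int\int_{D\times\Omega} g_k(x,\omega) \, \varphi(x) \, X(\omega) \dx \, dP(\omega) \,.
\end{equation*}
By Fubini this says $\int_\Omega \big[ \int_D (f \, D_k\varphi + g_k\,\varphi) \dx \big] X \, dP = 0$ for every bounded measurable $X$, hence the inner bracket is $0$ for $P$-almost every $\omega$; that is, $P\big(\int_D f\, D_k\varphi \dx = -\int_D g_k\, \varphi \dx\big) = 1$ for each fixed $\varphi$. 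Finally, choosing a countable family $\{\varphi_j\}$ dense (in $C^1$ on compacta, say) in $C_0^\infty(D)$ and using the density remark made in the text, the single null set can be chosen independently of $\varphi$, which is precisely the definition of $D_k f = g_k$ in $L^p(D\times\Omega)$.

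I do not expect any genuine obstacle here: the whole argument is a soft combination of discrete integration by parts, uniform convergence of the translates of a smooth compactly supported test function, Fubini's theorem (justified by $f,g_k \in L^p(D\times\Omega)$ and $X \in L^\infty(\Omega)$), and the standard trick of reducing ``for all $\varphi$'' to ``for all $\varphi_j$ in a countable dense set'' to get a single exceptional null set. The only point requiring a little care is the last reduction: one must check that if $\int_D f\, D_k\varphi_j \dx = -\int_D g_k\,\varphi_j \dx$ holds for all $j$ (for a fixed $\omega$ outside a null set), then it holds for all $\varphi \in C_0^\infty(D)$; this follows because for fixed $f(\cdot,\omega), g_k(\cdot,\omega) \in L^p(D)$ both sides are continuous in $\varphi$ with respect to $C^1$-convergence on a compact set, and such convergence is achievable from the countable family (e.g.\ via mollification with rational parameters), exactly as in the deterministic case.
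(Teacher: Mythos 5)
Your argument is correct and is essentially the paper's own proof run in the opposite direction: the paper starts from $E[X\int_D(f\,D_k\varphi+g_k\varphi)\dx]$, writes $D_k\varphi$ as the limit of $\triangle_{k,-h_n}\varphi$ by dominated convergence and then shifts the quotient onto $f$ by Fubini and a change of variables, whereas you shift the quotient onto $\varphi$ directly in the hypothesis and pass to the limit; both proofs conclude by the arbitrariness of $X$, and your final countable-density step is only needed for the ``single null set'' reformulation mentioned parenthetically in the paper's definition, not for the statement itself. One cosmetic slip: with the paper's convention $\triangle_{k,h}\varphi(x)=h^{-1}(\varphi(x+he_k)-\varphi(x))$ one has $\triangle_{k,-h_n}\varphi\to +D_k\varphi$ uniformly, not $-D_k\varphi$; your displayed conclusion $\int\!\!\int f\,D_k\varphi\,X\dx\,dP=-\int\!\!\int g_k\,\varphi\,X\dx\,dP$ is the one consistent with the correct limit, so the argument stands as written once that sign is fixed.
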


\begin{proof}
Since $X$ and $\varphi$ are bounded, we may apply (first Fubini and then)
Lebesgue's dominated convergence theorem, and we get
\begin{align*}
- E \Big[  X\int_{D} \big(  f \, D_k \varphi +g_{k}\varphi \big)  \dx \Big]
& = - \int \int_{D\times\Omega} X \big(  f \, D_k \varphi
+g_{k} \varphi \big)  \dx \, dP\\
& =\lim_{n\rightarrow\infty}\int\int_{D\times\Omega}X \big( - f(
x) \, \triangle_{k,-h_n}\varphi(x) - g_{k}(x) \, \varphi(x) \big)  \dx \, dP.
\end{align*}
When $h_{n}< \dist(  \supp \varphi,\partial D)  $, this is equal to
(we apply Fubini twice and a change of variables)
\[
\lim_{n\rightarrow\infty}\int\int_{D\times\Omega}X \big( 
\triangle_{k,h_n}f(x,\omega) \varphi(
x)  -g_{k}(x,\omega) \, \varphi(x) \big)  \dx \, dP.
\]
This limit is zero by assumption, hence%
\[
E \Big[  X\int_{D} \big(  f \, D_k \varphi +g_{k} 
\varphi \big)  \dx \Big]  =0.
\]
The arbitrariness of $X$ implies $\int_{D} \big(  f D_k\varphi +g_{k}\varphi \big)  \dx=0$, as a random variable on $\Omega
$. The proof is complete. 
\end{proof}

\begin{corollary}
If there is a constant $C>0$ such that
\[
E \Big[  \int_{D'}\left|  \triangle_{k,h}f(x) \right|^{p} \dx\Big]  \leq C
\]
for all $h$ and all $D' \Subset D$ such that $\kabs{h} < \dist(D',\partial D)$, then $D_k f$ is in $L^{p}(D \times \Omega)$. \comment{ and there are sequences $h_{n}\rightarrow0$ such that
\[
\lim_{n\rightarrow\infty}\int\int_{D\times\Omega} \Big( 
\triangle_{k,h_n}f(x,\omega) - \frac{\partial
f}{\partial x_{k}} \Big)  \varphi(  x)  X(  \omega)
\dx \, dP(\omega)  =0
\]
for every $\varphi\in C_{0}^{\infty}(  D)  $ and every bounded
measurable $X:\Omega\rightarrow\mathbb{R}$.}
\end{corollary}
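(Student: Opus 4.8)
The plan is to combine the preceding lemma with a standard weak-compactness argument; the only point that differs from the deterministic situation is that the compactness has to be carried out in $L^p(D\times\Omega)$ rather than in $L^p(D)$. First I would fix an exhausting sequence of subdomains $D_m \Subset D$ with $\bigcup_m D_m = D$, set $\delta_m := \dist(D_m,\partial D) > 0$, and pick an arbitrary sequence $h_n \to 0$. For each fixed $m$ and all $n$ with $\kabs{h_n} < \delta_m$ the quotient $\triangle_{k,h_n} f$ is well defined on $D_m\times\Omega$, and by the hypothesis (together with Fubini)
\[
\knorm{\triangle_{k,h_n} f}_{L^p(D_m\times\Omega)}^p = E\Big[\int_{D_m}\kabs{\triangle_{k,h_n} f(x)}^p\dx\Big] \leq C \,,
\]
so $(\triangle_{k,h_n} f)_n$ is bounded in $L^p(D_m\times\Omega)$. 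Since $1<p<\infty$, this space is reflexive and the sequence therefore has a weakly convergent subsequence; extracting first for $m=1$, then a further subsequence for $m=2$, and so on, a diagonal argument yields a single subsequence (not relabelled) with $\triangle_{k,h_n} f \rightharpoonup g^{(m)}$ in $L^p(D_m\times\Omega)$ for every $m$.

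Next I would check that the limits are consistent: testing against elements of $L^{p'}$ supported in $D_m\times\Omega$ shows $g^{(m+1)}|_{D_m\times\Omega} = g^{(m)}$, so the $g^{(m)}$ glue to one measurable function $g_k$ on $D\times\Omega$ with $g_k|_{D_m\times\Omega} = g^{(m)}$. Weak lower semicontinuity of the norm gives $\knorm{g_k}_{L^p(D_m\times\Omega)} \leq \liminf_{n}\knorm{\triangle_{k,h_n} f}_{L^p(D_m\times\Omega)} \leq C^{1/p}$ for each $m$, and letting $m\to\infty$ by monotone convergence we obtain $g_k\in L^p(D\times\Omega)$ with $\knorm{g_k}_{L^p(D\times\Omega)}\leq C^{1/p}$.

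Finally, to apply the previous lemma I would verify its hypothesis for this particular sequence $h_n$ and this $g_k$. Given $\varphi\in C_0^\infty(D)$ and a bounded measurable $X\colon\Omega\to\R$, I choose $m$ with $\supp\varphi\subset D_m$; then $\varphi\otimes X\in L^{p'}(D_m\times\Omega)$ because $\varphi$ and $X$ are bounded while $D_m$ and $\Omega$ have finite measure, and since $\varphi$ vanishes outside $D_m$ the integral $\int\!\int_{D\times\Omega}(\triangle_{k,h_n} f - g_k)\,\varphi\, X\dx\, dP$ coincides with the corresponding integral over $D_m\times\Omega$, which tends to $0$ by the weak convergence on $D_m\times\Omega$. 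The previous lemma then yields $D_k f = g_k\in L^p(D\times\Omega)$.

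I expect the only genuinely delicate step to be the diagonal extraction together with the compatibility check that makes $g_k$ globally well defined on $D\times\Omega$; the uniform norm bound, the passage from local to global by monotone convergence, and the final application of the lemma are all routine. (If one wished to cover $p=\infty$ as well, reflexivity would be replaced by sequential weak-$*$ compactness of bounded sets in $L^\infty(D_m\times\Omega)=(L^1(D_m\times\Omega))'$, the space $D_m\times\Omega$ being $\sigma$-finite, and the rest of the argument is unchanged.)
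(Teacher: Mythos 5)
Your proposal is correct and follows essentially the same route as the paper: a uniform bound on the difference quotients, weak compactness in the reflexive space $L^p$ over $D\times\Omega$, testing against products $\varphi(x)X(\omega)\in L^{p'}$, and then the preceding lemma identifies the weak limit as $D_k f$. The only difference is that you spell out the exhaustion-plus-diagonal extraction and the gluing of the local weak limits, a step the paper passes over silently, so nothing needs to change.
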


\begin{proof}
The family $g_{k,h}(x,\omega)  :=\triangle_{k,h}f(x,\omega) $ is
equibounded in $L^{p}(D' \times \Omega)$, hence there is a sequence
$h_{n}\rightarrow0$ such that $g_{k,h_{n}}$ converges weakly in $L^{p}\left(
D\times\Omega\right)  $ to some function $g_{k}\in L^{p}\left(  D\times
\Omega\right)  $. The product $\varphi(  x)  X(
\omega)  $ is in $L^{p^{\prime}}(D\times\Omega)$
($p^{\prime}$ conjugate to $p$) for every $\varphi\in C_{0}^{\infty}(
D)$ and every bounded measurable $X:\Omega\rightarrow\mathbb{R}$.
Hence, we may apply the lemma and obtain the assertion. 
\end{proof}

First, for our later application, we replace $D$ by $D \times [
0,T]$ and we allow different integrability exponents with
respect to the variables in $[0,T]$ and $D$, respectively. 
Let $f:\Omega\rightarrow L^q(0,T; L^{p}(D))$ be a
measurable function with $p \in (1,\infty)$ and $q > 1$. We say that a function
$g_{k}:\Omega\rightarrow  L^q(0,T; L^{p}(D))$ is weak derivative of $f$ in the $k$-direction with probability one if for a.\,e. $(t,\omega)  \in [0,T] \times \Omega$ we have
\[
\int_{D} f \, D_k \varphi \dx=-\int_{D}g_{k} \varphi \dx
\]
for every $\varphi\in C_{0}^{\infty}(D)$, and we then write $D_k f = g_k$. Furthermore, let us generalize to a
scheme where we relax the integrability in $\Omega$.

\begin{theorem}
\label{thm-derivatives}
Let $Y \colon [0,T] \times \Omega \to (0,1]$ be a positive random variable, with $P( \inf_{t \in [0,T]} Y > 0 )  =1$. If
there is a constant $C>0$ such that
\[
E\Big[  \bnorm{Y(t) \, \triangle_{k,h_n}f(x,t) }_{L^q(0,T; L^{p}(
D'))}^{p} \Big]  \leq C
\]
for all $h$ and $D' \Subset D$ satisfying $\kabs{h} < \dist(D',\partial D)$, 
then $D_k f \in L^q(0,T; L^{p}(D))$ with 
probability one and there hold
\begin{align*}
& Y \dq f \to Y D_kf \quad \text{weakly in } L^{p}(\Omega; L^q(0,T; L^{p}(D))),\\
& E \big[ \bnorm{Y D_k f}_{L^q(0,T; L^{p}(
D))}^{p} \big] \leq C
\end{align*}
with the same constant $C$. \comment{Furthermore, there are sequences $h_{n}%
\rightarrow0$ such that
\[
\lim_{n\rightarrow\infty}\int\int_{D\times\Omega} \Big( 
\triangle_{k,h_n}f(x,\omega) -\frac{\partial
f}{\partial x_{k}} \Big)  \varphi(x)  X(\omega)
\dx \, dP(\omega)  =0
\]
for every $\varphi\in C_{0}^{\infty}(D)  $ and every bounded
measurable $X:\Omega\rightarrow\mathbb{R}$.}

\begin{proof}
The family $Z_{k,h}(x,t,\omega):=Y(t) \triangle_{k,h_n}f(x,t,\omega)$ is equibounded in
$L^{p}(\Omega; L^q(0,T; L^{p}(D')))  $, hence there
is a sequence $h_{n}\rightarrow0$ such that $Z_{k,h_{n}}$ converges weakly in
$L^{p}(\Omega; L^q(0,T; L^{p}(D)))$ (or weakly-$\ast$
if $q = \infty$) to some
function $Z_{k}\in L^{p}(\Omega; L^q(0,T; L^{p}(D)))$. 
This implies (again with $\psi$, $X$ bounded, measurable and $\varphi$ smooth, compactly supported)
\[
\lim_{n\rightarrow\infty}\int\int\int_{
D \times [0,T] \times \Omega} \big(  Y(t)
\triangle_{k,h_n}f(x,t)-Z_{k}(x,t) \big) \, \varphi(x) \, \psi(
t) \, X \dx \dt \, dP=0 \, .
\]
Hence, by Fubini and change of variables as above, we find
\[
\lim_{n\rightarrow\infty}\int\int\int_{
D \times [0,T]  \times \Omega} \big(  Y(t) f( x,t) 
\triangle_{k,-h_n} \varphi(x) +Z_{k}(x,t) \, \varphi(x) \big)
\psi(t)  X \dx \dt \, dP=0 \,,
\]
which in turn implies by Lebesgue's theorem
\[
\int\int\int_{D \times [0,T]  \times  \Omega} \big(  Y(t) f(
x,t) \, D_k \varphi(x) +Z_{k}(x,t) \, \varphi(x) \big)
\psi(t)  X \dx \dt \,dP=0 \,.
\]
Arbitrariness of $X$ and $\psi$ thus yields
\[
\int_{D} \big(  Y(t) f(x,t) \, D_k \varphi(x) +Z_{k}(x,t) \, \varphi(x) \big)  \dx=0
\]
for a.\,e. $(t,\omega) \in [0,T] \times \Omega$. Therefore, we have
\begin{equation}
\int_{D} \big(  f(x,t) \, D_k \varphi(x) +g_{k}(x,t) \, \varphi(x) \big)  \dx=0 \label{distr}
\end{equation}
for a.\,e. $(t,\omega)  \in [0,T] \times \Omega$,
where $g_{k}=Y^{-1}Z_{k}$. Since $Z_{k}$ belongs to $L^{p}( \Omega; L^q(0,T; L^{p}( D)))$, 
it is $L^q(0,T; L^{p}( D))$ for $P$-a.\,e. $\omega\in\Omega$. Hence, by assumption on $Y$,  we also have $g_{k}\in L^q(0,T; L^{p}(D))$ for $P$-a.\,e. $\omega\in\Omega$. The only difference with the definition of $g_k$ being the ``weak derivative of $f$ in the $k$-direction with probability one'' is that the negligible set of $(t,\omega)  \in [0,T] \times \Omega$ where~\eqref{distr} may fail depends on $\varphi\in C_{0}^{\infty}(D)$, until now. But $W^{1,p^{\prime}}(D)$ ($p^{\prime}$ is conjugate to $p$) is separable and $C_{0}^{\infty}(D)$ is dense in it. Hence, there is a countable family $\{  \varphi_{n}\}
\subset C_{0}^{\infty}(D)$ which is dense in $W^{1,p^{\prime}}(D)$. If we call $N$ the countable union of all negligible sets of $(t,\omega)  \in [0,T] \times \Omega$ where~\eqref{distr} may fail for $\{ \varphi_{n} \}$, $N$ is negligible,
and on the complementary we have~\eqref{distr} for every $\varphi_{n}$, hence
by density for all $\varphi\in W^{1,p^{\prime}}(D)$ and then
for all $\varphi\in C_{0}^{\infty}(D)$. Having identified $g_k$
as the weak derivative of $f$ in the $k$-direction we take advantage of the lower 
semi-continuity of the norm with respect to weak (or weak-$\ast$) convergence
and thus find 
\[ E \big[ \knorm{Y D_k f}_{L^q(0,T; L^{p}(
D))}^{p} \big] = E \big[ \knorm{Z_k}_{L^q(0,T; L^{p}( D))} \big] \leq C.
\]
The proof is complete. 
\end{proof}
\end{theorem}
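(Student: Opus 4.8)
The plan is to follow the proof of the preceding Corollary, now carrying the weight $Y$ and relaxing the integrability in $\omega$. Set $Z_{k,h}(x,t,\omega):=Y(t,\omega)\,\dq f(x,t,\omega)$, which by hypothesis is, along the relevant range of stepsizes, bounded in $L^p(\Omega;L^q(0,T;L^p(D')))$ for every $D'\Subset D$ with $\kabs{h}<\dist(D',\partial D)$. Since this space is reflexive when $1<p,q<\infty$ and possesses a separable predual when $q=\infty$, a diagonal extraction over an exhaustion by subdomains $D_m'\Subset D$ with $D_m'\uparrow D$ produces a single sequence $h_n\to0$ and a limit $Z_k$ such that $Z_{k,h_n}\rightharpoonup Z_k$ weakly (resp.\ weakly-$\ast$) in $L^p(\Omega;L^q(0,T;L^p(D_m')))$ for every $m$; lower semicontinuity of the norm and monotone convergence in $m$ then give $Z_k\in L^p(\Omega;L^q(0,T;L^p(D)))$ with $E[\bnorm{Z_k}_{L^q(0,T;L^p(D))}^p]\le C$.

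Next I would test this weak convergence against functions of product form $\varphi(x)\,\psi(t)\,X(\omega)$ with $\varphi\in C_0^\infty(D)$, $\psi$ bounded on $(0,T)$, and $X$ bounded measurable on $\Omega$; these lie in the (pre)dual space, so the passage to the limit is legitimate. For $n$ so large that $\kabs{h_n}<\dist(\supp\varphi,\partial D)$, Fubini's theorem together with the change of variables $\int_D(\dq f)\,\varphi\,\dx=-\int_D f\,\triangle_{k,-h_n}\varphi\,\dx$ transfers the difference quotient onto the smooth factor; since $\triangle_{k,-h_n}\varphi\to -D_k\varphi$ uniformly on $D$, Lebesgue's dominated convergence theorem yields
\[
\int\int\int_{D\times[0,T]\times\Omega}\big(Y(t)\,f(x,t)\,D_k\varphi(x)+Z_k(x,t)\,\varphi(x)\big)\,\psi(t)\,X\,\dx\dt\,dP=0.
\]
Arbitrariness of $\psi$ and $X$ gives $\int_D(Y(t)\,f\,D_k\varphi+Z_k\,\varphi)\,\dx=0$ for almost every $(t,\omega)$; dividing by $Y(t,\omega)>0$ and setting $g_k:=Y^{-1}Z_k$ gives $\int_D(f\,D_k\varphi+g_k\,\varphi)\,\dx=0$ for almost every $(t,\omega)$.

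It remains to upgrade this to an identity valid for all $\varphi$ off a single null set: choosing $\{\varphi_m\}\subset C_0^\infty(D)$ dense in the separable space $W^{1,p'}(D)$ ($p'$ conjugate to $p$), discarding the countable union of the exceptional sets for the $\varphi_m$, and using density together with continuity of both sides in $\varphi\in W^{1,p'}(D)$, one obtains the identity for all such $\varphi$, hence for all $\varphi\in C_0^\infty(D)$. This is precisely the assertion that $g_k$ is the weak $k$-derivative of $f$ with probability one. For integrability, $Z_k\in L^p(\Omega;L^q(0,T;L^p(D)))$ implies $Z_k(\cdot,\cdot,\omega)\in L^q(0,T;L^p(D))$ for $P$-a.\,e.\ $\omega$, and since $P(\inf_{t\in[0,T]}Y>0)=1$ the same holds for $g_k(\cdot,\cdot,\omega)=Y(\cdot,\omega)^{-1}Z_k(\cdot,\cdot,\omega)$; thus $D_kf\in L^q(0,T;L^p(D))$ with probability one. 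Finally $Y\,\dq f=Z_{k,h_n}\rightharpoonup Z_k=Y\,D_kf$ is the claimed weak (weak-$\ast$) convergence, and the estimate $E[\bnorm{Y\,D_kf}_{L^q(0,T;L^p(D))}^p]\le C$ is the norm bound for $Z_k$ recorded above.

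I expect the routine parts to be the change of variables and the limit passage, essentially identical to the deterministic argument and to the $L^p(\Omega;\cdot)$-integrable case already treated. The genuinely delicate points are that the weight makes $g_k=Y^{-1}Z_k$ only pathwise $L^q(0,T;L^p(D))$-integrable rather than a member of $L^p(\Omega;L^q(0,T;L^p(D)))$ — so the conclusion must be phrased for almost every $\omega$, and only $Y\,D_kf$ (not $D_kf$ itself) enjoys a global $L^p(\Omega;\cdot)$ bound — together with the measurability bookkeeping needed to make the exceptional null set independent of $\varphi$, via separability of $W^{1,p'}(D)$. The case $q=\infty$ additionally forces one to argue sequential weak-$\ast$ compactness through a separable predual rather than invoking reflexivity.
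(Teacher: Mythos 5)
Your argument is correct and follows essentially the same route as the paper's proof: extraction of a weak (or weak-$\ast$) limit $Z_k$ of the weighted difference quotients, testing against products $\varphi(x)\psi(t)X(\omega)$ with Fubini and the discrete integration by parts, identification of $g_k=Y^{-1}Z_k$ as the pathwise weak derivative via separability of $W^{1,p'}(D)$, and lower semicontinuity of the norm for the final bound. The only difference is that you spell out the passage from the local bounds on $D'\Subset D$ to a limit on all of $D$ by a diagonal extraction over an exhaustion, a detail the paper leaves implicit; otherwise the two proofs coincide.
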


\begin{remark}
\label{rem-diff-quot}
This result will be applied later in the cases $p = q$ where the assumption then
reads as
\[
E \Big[  \int_{0}^{T}\int_{D} \kabs{ Y(t) \dq f_1(x,t) }^{p} \dx \dt \Big] 
\leq C
\]
and where we have $L^{p}(\Omega; L^q(0,T; L^{p}(D))) =
L^{p}(D \times [0,T] \times \Omega)$, or in the case $q = \infty$
where we then require
\[
E \Big[  \sup_{t \in (0,T)}  \int_{D}  \kabs{ Y(t) \dq f_2(x,t) }^{p} \dx
\Big]  \leq C \,.
\]
From the theorem we then conclude that $D_k f_1 \in L^p(D \times [0,T])$
and $D_k f_2 \in L^{\infty}(0,T; L^{p}(D))$ with probability one, respectively. In particular, if we take a function $f \in W^{1,p}(D)$ and if the previous assumptions are satisfied for $f_1 = Df$ and $f_2 = f$, then the conclusions are equivalent to $D_k f \in V^p(D_T)$.
\end{remark}

\subsection{A criterion for pathwise H\"older continuity}

We next discuss a criterion which guarantees H\"older continuity of (a suitable representative of) a given functions $u \colon D \times [0,T] \rightarrow\mathbb{R}^N$. For example, Sobolev's embedding theorem provides a criterion easy to apply if $u$ is in a suitable Sobolev space $W^{1,q}( D \times [0,T],\R^N)$ -- but which in general is not satisfied for the solutions considered in our paper since derivatives in time need not exist. Instead, we now prove that it is sufficient that only the spatial derivatives belong to a suitable Lebesgue space, provided that a weak form of continuity in time (i.\,e. of the $L^2(D)$-norm) is available.

\begin{lemma}
\label{lem_det_Hoelder}
If a function $u \colon D \times [0,T] \rightarrow\mathbb{R}^N$ has the
properties
\[
Du\in L^{\infty}(0,T;L^{n+\alpha}(D,\R^{nN})),\qquad u\in C^{\beta}(0,T;L^{2}(D,\R^N))
\]
for some $\alpha,\beta>0$, $D\subset\mathbb{R}^{n}$ a bounded, regular domain,
then
\[
u\in C^{\gamma}(D \times [0,T],\R^N)
\]
for some $\gamma>0$, depending only on $\alpha$, $\beta$ and $n$. 

\begin{proof}
First, we deduce spatial H\"older continuity for every time slice. 
From the assumption $Du\in L^{\infty}(0,T;L^{n+\alpha}(D))$ we deduce $u\in
L^{\infty}(0,T;C^{\delta}(D))$ for some $\delta>0$, depending only on $\alpha$
and $n$, by Sobolev's embedding theorem. Namely, there exists $C_{1}>0$ such
that
\begin{equation}
\left\vert u(x,t)  -u(y,t)  \right\vert \leq
C_{1}\left\vert x-y\right\vert ^{\delta}\label{Holder1}%
\end{equation}
for all $t \in [0,T]  $, $x,y\in D$. 

Our next aim is H\"older continuity in time, at a fixed point. From the inequality
\begin{equation*}
\knorm{ u(\cdot,t)  -u(\cdot,s) }_{L^{2}(D)}\leq C_{2} \kabs{ t-s }^{\beta}
\end{equation*}
for $s,t \in [0,T]$, we infer for every set $B\subset D$
\[
\inf_{x\in B} \kabs{u(x,t) - u(x,s)}
\leq\frac{1}{\kabs{B}} \int_{B} \kabs{ u(x,t) -u(x,s)} \dx \leq\frac{1}{\kabs{B}^{1/2}} \knorm{ u(\cdot,t) - u(\cdot,s) }_{L^{2}(D)} \leq \frac{C_{2} \, \kabs{t-s}^{\beta}}{\kabs{B}^{1/2}}.
\]
Let $x_{0} \in D$ be given. In order to prove H\"older continuity in time at $x_0$, 
we estimate
\begin{align*}
\left\vert u(  x_{0},t)  -u(  x_{0},s)  \right\vert  &
\leq\left\vert u(  x_{0},t)  -u(  x,t)  \right\vert
+\left\vert u(  x,t)  -u(  x,s)  \right\vert +\left\vert
u(  x,s)  -u(x_{0},s)  \right\vert \\
& \leq2C_{1}\left\vert x-x_{0}\right\vert ^{\delta}+\left\vert u(
x,t)  -u(  x,s)  \right\vert
\end{align*}
for every $x\in D$. Hence, if we take $x$ in a ball $B(  x_{0},\rho)$, we have
\begin{align*}
\left\vert u(  x_{0},t)  -u( x_{0},s)  \right\vert  &
\leq2C_{1}\rho^{\delta}+\inf_{x\in B(  x_{0},\rho)  }\left\vert
u(  x,t)  -u(  x,s)  \right\vert \\
& \leq2C_{1}\rho^{\delta}+C_{3}\frac{C_{2}\left\vert t-s\right\vert ^{\beta}%
}{\rho^{n/2}}
\end{align*}
where $C_{3}$ is such that $\left\vert B(  x_{0},\rho)  \right\vert
=\rho^{n}/C_{3}^{2}$. Let us now choose $\rho=\left\vert t-s\right\vert
^{\varepsilon}$ for some $\varepsilon > 0$:
\[
\left\vert u(  x_{0},t)  -u(x_{0},s)  \right\vert
\leq2C_{1}\left\vert t-s\right\vert ^{\varepsilon\delta}+C_{3}C_{2}\left\vert
t-s\right\vert ^{\beta-\varepsilon n/2}.
\]
If we choose for instance $\varepsilon=\beta/n$, we get
\begin{equation}
\left\vert u(  x_{0},t)  -u( x_{0},s)  \right\vert \leq
C_{4}\left\vert t-s\right\vert ^{\eta}\label{Holder2}
\end{equation}
for some $\eta,C_{4}>0$, independently of $x_{0}\in D$, $t,s\in\left[
0,T\right]  $. The constant $\eta$ depends only on $\beta$, $\delta$ and $n$.

From~\eqref{Holder1} and~\eqref{Holder2} it is now
straightforward to deduce the claim of the lemma.
\end{proof}
\end{lemma}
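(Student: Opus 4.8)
The plan is to establish spatial and temporal H\"older continuity of $u$ separately, each with a modulus of continuity that is uniform in the other variable, and then to glue the two estimates into joint H\"older continuity on the cylinder $D\times[0,T]$. For a single time slice, note that $Du(\cdot,t)\in L^{n+\alpha}(D)$ with exponent $n+\alpha>n$, while $u(\cdot,t)\in L^2(D)\subset L^1(D)$ on the bounded domain $D$; a standard bootstrap via the Sobolev--Poincar\'e inequality then places $u(\cdot,t)$ in $W^{1,n+\alpha}(D)$, and Morrey's embedding $W^{1,n+\alpha}(D)\hookrightarrow C^{0,\delta}(\overline{D})$ with $\delta=\alpha/(n+\alpha)$ gives
\[
\kabs{u(x,t)-u(y,t)}\le C_1\,\kabs{x-y}^{\delta}\qquad\text{for all }x,y\in D
\]
(for the spatially continuous representative and every $t\in[0,T]$). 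The crucial point is that the Morrey estimate bounds the H\"older seminorm of $u(\cdot,t)$ by $\knorm{Du(\cdot,t)}_{L^{n+\alpha}(D)}$ alone, so $C_1$ may be chosen independent of $t$ thanks to $Du\in L^{\infty}(0,T;L^{n+\alpha}(D))$.

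Next I would obtain H\"older continuity in time at an arbitrary fixed point $x_0\in D$. The mechanism is that the weak time-continuity $\knorm{u(\cdot,t)-u(\cdot,s)}_{L^2(D)}\le C_2\kabs{t-s}^{\beta}$ forces the increment $u(\cdot,t)-u(\cdot,s)$ to be small somewhere in every small ball: averaging over $B(x_0,\rho)\subset D$ and applying Cauchy--Schwarz,
\[
\inf_{x\in B(x_0,\rho)}\kabs{u(x,t)-u(x,s)}\le\kabs{B(x_0,\rho)}^{-1/2}\,\knorm{u(\cdot,t)-u(\cdot,s)}_{L^2(D)}\le c\,C_2\,\rho^{-n/2}\kabs{t-s}^{\beta}.
\]
Picking $x$ near the infimum and inserting the spatial estimate of the previous step through the triangle inequality $\kabs{u(x_0,t)-u(x_0,s)}\le 2C_1\rho^{\delta}+\kabs{u(x,t)-u(x,s)}$ yields $\kabs{u(x_0,t)-u(x_0,s)}\le 2C_1\rho^{\delta}+c\,C_2\,\rho^{-n/2}\kabs{t-s}^{\beta}$; optimizing in $\rho$ by the scaling choice $\rho=\kabs{t-s}^{\beta/n}$ (admissible for small $\kabs{t-s}$, the remaining range being absorbed into constants since $T<\infty$) produces
\[
\kabs{u(x_0,t)-u(x_0,s)}\le C_4\,\kabs{t-s}^{\eta},\qquad\eta:=\min\Bigl\{\tfrac{\beta\delta}{n},\tfrac{\beta}{2}\Bigr\}>0,
\]
with $C_4$ and $\eta$ independent of $x_0$.

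Finally I would combine the two bounds via $\kabs{u(x,t)-u(y,s)}\le\kabs{u(x,t)-u(y,t)}+\kabs{u(y,t)-u(y,s)}\le C_1\kabs{x-y}^{\delta}+C_4\kabs{t-s}^{\eta}$, which on the bounded set $D\times[0,T]$ is dominated by a constant multiple of $(\kabs{x-y}+\kabs{t-s})^{\gamma}$ with $\gamma:=\min\{\delta,\eta\}$, so $u\in C^{\gamma}(D\times[0,T],\R^N)$ with $\gamma$ depending only on $\alpha$, $\beta$ and $n$. I expect the only genuinely non-routine point to be the temporal estimate --- specifically the averaging/infimum trick that upgrades the $L^2$-in-space time-continuity to pointwise time-continuity, together with the correct balancing of the ball radius $\rho$ against $\kabs{t-s}$; once spatial H\"older continuity with a $t$-independent constant is available, the rest is bookkeeping.
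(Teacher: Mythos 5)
Your proposal is correct and follows essentially the same route as the paper's proof: Morrey/Sobolev embedding on time slices with a constant uniform in $t$, then the averaging--infimum trick with Cauchy--Schwarz to convert the $C^{\beta}(0,T;L^2(D))$ bound into pointwise time-H\"older continuity, with the same balancing choice $\rho=\kabs{t-s}^{\beta/n}$, and finally the triangle-inequality gluing. The only cosmetic difference is that you spell out the bootstrap placing $u(\cdot,t)$ in $W^{1,n+\alpha}(D)$ and the near-infimum selection of $x$, which the paper leaves implicit.
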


With the previous lemma at hand, we now give a criterion in the probabilistic setting, with $(\Omega,F,P)$ a complete probability space, which is adapted to weak solutions.

\begin{proposition}
\label{prop_Hoelder}
Let $u: D \times [0,T] \times \Omega \rightarrow \mathbb{R}^{nN}$ have
the properties
\begin{equation}
\label{int_assumption_Hoelder}
P\big(  Du\in L^{\infty}(0,T;L^{n+\varepsilon}(D,\mathbb{R}^{nN})) \big)  =1
\end{equation}
\[
u(  x,t)  =u_{0}(x)  +\int_{0}^{t}a(x,s) \ds +\int
_{0}^{t}b(x,s)\,dB_{s}%
\]
for some $\varepsilon > 0$, $u_{0}\in L^{2}(D)$, and with progressively measurable fields $a$, $b$ such that
\[
P \Big(  \int_{0}^{T}\int_{D} \kabs{ a(x,s) }^{2} \dx \ds+ 
\int_0^T \Big( \int_{D} \kabs{ b(x,s) }^{2} \dx \Big)^{\frac{2 + \varepsilon}{2}} \ds < \infty \Big)  =1.
\]
Then
\[
P\big(  u\in C^{\gamma}(D \times [0,T]) \big) =1
\]
for some $\gamma>0$ depending only on $\epsilon$.

\begin{proof}
\textbf{Step~1}. If we prove that, for some $\beta>0$,
\[
P\big(  u\in C^{\beta}(0,T;L^{2}(D)) \big)  =1 \,,
\]
then we get the claim of the proposition after the pathwise application of the previous Lemma~\ref{lem_det_Hoelder} (using in particular the stated independence of the H\"older exponent). To this end we observe that the function $u$ is the sum of two terms:
\[
u_{1}(  x,t)  =u_{0}(  x)  +\int_{0}^{t}a(x,s) \ds,\qquad
u_{2}(  x,t)  =\int_{0}^{t}b(x,s)\,dB_{s}%
\]
The term $u_{1}$ is, with probability one, of class $W^{1,2}(
0,T;L^{2}(D))$, hence it is of class $C^{1/2}(0,T;L^{2}(D))$:
\begin{equation*}
\knorm{ u_{1}(  t)  -u_{1}(  s) }
_{L^{2}(D)}  = \Bnorm{ \int_{s}^{t}a(\cdot,r) \dr }_{L^{2}(D)}
\leq\left\vert t-s\right\vert ^{1/2} \Big(  \int_{0}^{T}\int_{D} 
 \kabs{a(x,r)}^{2} \dx \dr \Big)^{1/2}.
\end{equation*}
So it only remains to prove that, for some $\beta>0$,
\[
P\left(  u_{2}\in C^{\beta}(0,T;L^{2}(D))\right)  =1.
\]

\textbf{Step~2}. For $R>0$, let
\[
\tau_{R}=\inf\big\{  t\in(0,T] \colon 
\int_0^t \knorm{b(\cdot,s)}_{L^{2}(D)}^{2+\varepsilon} \ds >R \big\}
\]
if the set is non empty, otherwise $\tau_{R}=T$. Let $\Omega_{R}\subset\Omega$
be the set where $\tau_{R}=T$. The family $\left\{  \Omega_{R}\right\}
_{R>0}$ is increasing, with
\[
P\big(\bigcup_{R>0}
\Omega_{R}\big) = 1
\]
because by assumption we have $P( \int_0^T \knorm{b(\cdot,s)}_{L^{2}(D)}^{2+\varepsilon} \ds < \infty)=1$. We now set
\[
b_{R}(x,s)=b(x,s)1_{s\leq\tau_{R}} \qquad \text{and} \qquad
u_{2,R}(  t)  =\int_{0}^{t}b_{R}(x,s)\,dB_{s}=\int_{0}^{t \wedge
\tau_{R}}b(x,s)\, dB_{s} \,.
\]
We then have
\[
\int_0^T \knorm{ b_{R}(\cdot,s,\omega) }_{L^{2}(D)}^{2+\varepsilon} \ds \leq R
\]
uniformly in $\omega$. Hence, for every $p\geq1$, we find
\begin{align*}
E\Big[  \knorm{ u_{2,R}(t)  -u_{2,R}(s)} _{L^{2}(D)}^{p} \Big]  
  & =E \Big[  \Bnorm{ \int_{s}^{t} b_{R}(\cdot,r)\,dB_{r} }_{L^{2}(D)}^{p} \Big] \\
  & \leq C_{p} E \Big[  \Big(  \int_{s}^{t} \knorm{ b_{R}(\cdot,r)}_{L^{2}(D)}^{2} 
    \dr \Big)^{\frac{p}{2}} \Big] \\
  & \leq C_p \kabs{t-s}^{\frac{p \varepsilon}{2(2+\varepsilon)}} E \Big[  \Big(  \int_{s}^{t} \knorm{ b_{R}(\cdot,r)}_{L^{2}(D)}^{2 + \varepsilon}  \dr \Big)^{\frac{p}{2+\varepsilon}} \Big] 
  \leq C_{p} \, R^{\frac{p}{2 + \varepsilon}} \kabs{t-s}^{\frac{p \varepsilon}{2(2+\varepsilon)}} .
\end{align*}
This implies, for $p = p(\varepsilon)$ sufficiently large, by Kolmogorov's regularity theorem for processes taking values in $L^{2}(D)$ (see~\cite[Theorem~3.3]{DAPZAB92} for a version in Banach spaces), that $u_{2,R}$ has a H\"{o}lder continuous version in $L^{2}(D)$
\[
\left\Vert u_{2,R}(\cdot,t,\omega)  -u_{2,R}(
\cdot,s,\omega)  \right\Vert _{L^{2}(D)}\leq C_{\beta,R}(
\omega)  \kabs{ t-s }^{\beta}
\]
with $\beta$ any H\"{o}lder exponent with $\beta<\frac{\varepsilon}{2 (2 + \varepsilon)}$. For $\omega\in\Omega_{R}$ we thus have (recalling the definition of $u_{2,R}$)
\[
\left\Vert u_{2}(\cdot,t,\omega)  -u_{2}(\cdot,s
,\omega)  \right\Vert _{L^{2}(D)}\leq C_{\beta,R}(\omega)
\kabs{ t-s }^{\beta}.
\]
Since $\bigcup_{R>0} \Omega_{R}$ is of full $P$-measure, we obtain $u_{2}\in C^{\beta}(0,T;L^{2}(D))$ for $P$-a.\,e. $\omega\in\Omega$. Now the previous Lemma~\ref{lem_det_Hoelder} can be applied, and the proof is complete.
\end{proof}
\end{proposition}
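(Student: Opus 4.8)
The plan is to split $u$ into its drift and martingale parts, to show separately that each is H\"older continuous as a map from $[0,T]$ into $L^{2}(D)$, and then to invoke the deterministic criterion of Lemma~\ref{lem_det_Hoelder} pathwise. The hypothesis~\eqref{int_assumption_Hoelder} that $Du\in L^{\infty}(0,T;L^{n+\varepsilon}(D))$ with probability one is precisely the first assumption of Lemma~\ref{lem_det_Hoelder} (with $\alpha=\varepsilon$); hence, once we establish that $P(u\in C^{\beta}(0,T;L^{2}(D)))=1$ for some $\beta>0$, that lemma applies for $P$-a.e.\ $\omega$ and gives $P(u\in C^{\gamma}(D\times[0,T]))=1$. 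Since the exponent furnished by Lemma~\ref{lem_det_Hoelder} depends only on $\alpha=\varepsilon$, on $\beta$, and on the fixed dimension $n$, and since the $\beta$ we shall produce depends only on $\varepsilon$, the resulting $\gamma$ depends only on $\varepsilon$, as asserted.

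Write $u=u_{1}+u_{2}$ along the given decomposition, with $u_{1}(\cdot,t)=u_{0}+\int_{0}^{t}a(\cdot,s)\ds$ and $u_{2}(\cdot,t)=\int_{0}^{t}b(\cdot,s)\,dB_{s}$. The drift part is immediate: by the assumption $P(\int_{0}^{T}\int_{D}\kabs{a}^{2}\dx\ds<\infty)=1$, for $P$-a.e.\ $\omega$ the map $t\mapsto u_{1}(\cdot,t)$ lies in $W^{1,2}(0,T;L^{2}(D))$, and Cauchy--Schwarz in the time variable gives $\knorm{u_{1}(\cdot,t)-u_{1}(\cdot,s)}_{L^{2}(D)}\le\kabs{t-s}^{1/2}\,\knorm{a}_{L^{2}(D\times(0,T))}$, that is, $u_{1}\in C^{1/2}(0,T;L^{2}(D))$ almost surely. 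So it only remains to prove $P(u_{2}\in C^{\beta}(0,T;L^{2}(D)))=1$ for some $\beta>0$.

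For the martingale term the obstruction is that $b$ is only assumed to satisfy $\int_{0}^{T}\knorm{b(\cdot,s)}_{L^{2}(D)}^{2+\varepsilon}\ds<\infty$ with probability one, with no uniform-in-$\omega$ bound, so the Burkholder--Davis--Gundy inequality and Kolmogorov's continuity theorem cannot be used directly. I would remove this obstruction by localization. Set $\tau_{R}=\inf\{t\in(0,T]\colon\int_{0}^{t}\knorm{b(\cdot,s)}_{L^{2}(D)}^{2+\varepsilon}\ds>R\}$ (and $\tau_{R}=T$ if this set is empty), let $\Omega_{R}=\{\tau_{R}=T\}$, and observe that $\{\Omega_{R}\}_{R>0}$ increases to a set of full probability. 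With $u_{2,R}(\cdot,t)=\int_{0}^{t\wedge\tau_{R}}b(\cdot,s)\,dB_{s}$ the stopped integrand satisfies $\int_{0}^{T}\knorm{b(\cdot,s)1_{s\le\tau_{R}}}_{L^{2}(D)}^{2+\varepsilon}\ds\le R$ uniformly in $\omega$. For every $p\ge1$, applying the Burkholder--Davis--Gundy inequality for the $L^{2}(D)$-valued martingale $u_{2,R}$ and then H\"older's inequality in $s$ with exponents $\tfrac{2+\varepsilon}{2}$ and $\tfrac{2+\varepsilon}{\varepsilon}$ yields
\[
E\big[\knorm{u_{2,R}(\cdot,t)-u_{2,R}(\cdot,s)}_{L^{2}(D)}^{p}\big]\le C_{p}\,R^{\frac{p}{2+\varepsilon}}\,\kabs{t-s}^{\frac{p\varepsilon}{2(2+\varepsilon)}}.
\]
Taking $p=p(\varepsilon)$ large enough that the exponent of $\kabs{t-s}$ exceeds $1$, Kolmogorov's continuity theorem for $L^{2}(D)$-valued processes (the Banach-space version, \cite[Theorem~3.3]{DAPZAB92}) produces a modification of $u_{2,R}$ that is $\beta$-H\"older continuous in $L^{2}(D)$ for every $\beta<\tfrac{\varepsilon}{2(2+\varepsilon)}$.

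It then remains to patch the localized pieces: on $\Omega_{R}$ one has $u_{2}=u_{2,R}$, so the H\"older bound transfers to $u_{2}$ there, and since $\bigcup_{R>0}\Omega_{R}$ has full measure, $u_{2}\in C^{\beta}(0,T;L^{2}(D))$ for $P$-a.e.\ $\omega$, with $\beta$ depending only on $\varepsilon$. Together with $u_{1}\in C^{1/2}(0,T;L^{2}(D))$ this gives $u\in C^{\beta}(0,T;L^{2}(D))$ almost surely, and Lemma~\ref{lem_det_Hoelder} then closes the argument. The step I expect to be the crux is the localization that makes the Burkholder--Davis--Gundy and Kolmogorov machinery available in spite of the merely almost-sure integrability of $b$, together with the bookkeeping that keeps all the exponents depending only on $\varepsilon$; everything else is routine.
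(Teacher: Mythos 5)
Your proposal is correct and follows essentially the same route as the paper's own proof: the same splitting into drift and stochastic-integral parts, the same stopping-time localization via $\tau_R$ and the sets $\Omega_R$, the same Burkholder--Davis--Gundy plus H\"older estimate feeding into the Banach-space Kolmogorov continuity theorem, and the same pathwise application of Lemma~\ref{lem_det_Hoelder} at the end. No gaps to report.
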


\subsection{A technical lemma}

In Kalita's paper a crucial point is to show higher regularity (such as higher integrability and differentiability) not only for the solution, but also for powers of the solution (resp. its gradient). For this purpose the following technical lemma was essential.

\begin{lemma}[\cite{KALITA94}]
\label{tech-kalita}
Let $u \colon \R^n \to \R^N$ be a function which is a.\,e. differentiable. Set $v = u \, \kabs{u}^{s}$ with $s \in (-1,\infty)$. Then, for
$\mu(s) := 1- (\frac{s}{2+s})^2$, we have a.\,e.
\begin{equation*}
Du \cdot Dv \geq \mu^{\frac{1}{2}}(s) \, \kabs{Du} \, \kabs{Dv} \,.
\end{equation*}
\end{lemma}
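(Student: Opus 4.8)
The plan is a direct pointwise computation, since the claimed inequality is purely algebraic once $Dv$ is expressed through $Du$. First I would fix a point $x$ at which $u$ is differentiable and $u(x)\neq 0$, deferring the set $\{u=0\}$ to the end. Writing $r:=|u(x)|>0$ and $\hat u:=u(x)/r$, I would decompose, for each $i\in\{1,\dots,n\}$, the column $\partial_i u\in\R^N$ as $\partial_i u=p_i+q_i$ with $p_i:=(\hat u\cdot\partial_i u)\,\hat u$ the component parallel to $\hat u$ and $q_i\perp\hat u$. Differentiating $v^\alpha=u^\alpha|u|^s$ gives
\[
\partial_i v = r^s\big(\partial_i u + s\,(\hat u\cdot\partial_i u)\,\hat u\big) = r^s\big((1+s)\,p_i+q_i\big),
\]
and hence, with $P:=\sum_i|p_i|^2$, $Q:=\sum_i|q_i|^2$ and using $p_i\perp q_i$ in $\R^N$,
\[
Du\cdot Dv = r^s\big((1+s)P+Q\big),\qquad |Du|^2 = P+Q,\qquad |Dv|^2 = r^{2s}\big((1+s)^2P+Q\big).
\]

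Next, since $s>-1$ makes $(1+s)P+Q\ge 0$, the quantity $Du\cdot Dv$ and the right-hand side of the asserted inequality are both nonnegative; after cancelling the common factor $r^s>0$ it therefore suffices to prove the squared inequality
\[
\big((1+s)P+Q\big)^2 \;\ge\; \mu(s)\,(P+Q)\big((1+s)^2P+Q\big).
\]
Expanding both sides and writing $a:=1+s$, so that $\mu(s)=1-\big(\tfrac{a-1}{a+1}\big)^2$, the difference of the two sides collapses, after collecting the $P^2$, $PQ$ and $Q^2$ terms, to
\[
\frac{(a-1)^2}{(a+1)^2}\big(a^2P^2-2aPQ+Q^2\big) \;=\; \frac{s^2}{(s+2)^2}\big((1+s)P-Q\big)^2 \;\ge\; 0,
\]
which proves the claim, with equality precisely when $s=0$ or $(1+s)P=Q$.

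Finally I would dispose of $\{u=0\}$: a.e.\ on this set $Du=0$, and there $Dv=0$ as well, so the inequality reduces to $0\ge 0$; hence it holds at almost every point. As for the main obstacle, there is essentially no analytic difficulty: the only step that really has to be seen correctly is the orthogonal splitting $\partial_i u = p_i+q_i$ with respect to $\hat u$, since this is exactly what — after the Frobenius norms are book-kept through the sum over $i$ — turns the estimate into the perfect square $\big((1+s)P-Q\big)^2$. The sole genuine (but standard and measure-zero) nuisance is the behaviour on $\{u=0\}$ in the range $-1<s<0$, where $v$ need not even be differentiable away from $\{Du=0\}$.
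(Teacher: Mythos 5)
Your computation is correct and is essentially the same route the paper takes: the paper quotes this lemma from Kalita without reproving it, but its proof of the modified version, Lemma~\ref{tech-kalita-2}~(iii), proceeds exactly as you do — differentiate $v$ pointwise, express $Du\cdot Dv$, $\kabs{Du}^2$, $\kabs{Dv}^2$ through the components of $Du$ parallel and orthogonal to $u$, and verify $\kabs{Du\cdot Dv}^2-\mu\,\kabs{Du}^2\kabs{Dv}^2\ge 0$. Your orthogonal split, which turns the difference into the exact square $\frac{s^2}{(s+2)^2}\big((1+s)P-Q\big)^2$, is only a cleaner bookkeeping of the Young-inequality step used in the paper, and your remark that a.e.\ on $\{u=0\}$ one has $Du=0$ (so the inequality is trivial wherever $Dv$ exists there) is an adequate treatment of the exceptional set.
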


We need the following modification of this result, which on the one hand 
allows to test the system with powers (truncated for large values)
and which on the other hand satisfies an estimate corresponding to the one from
Lemma~\ref{tech-kalita}.

\begin{lemma}
\label{tech-kalita-2}
For every $K > 0$ and every $q \geq 1$ there exists a $C^2$-function $T_{q,K}
\colon \R^+ \to \R^+$ such that
\begin{enumerate}
 \item[(i)] $T_{q,K}$ is strictly increasing and convex on $\R^+$, and it
satisfies $T_{q,K}(t) = t^{2q}$ for all $t \leq K$;
 \item[(ii)] for all $t \in \R^+$ and a constant $c(q)$ the
growth with respect to $t$ is estimated by 
\begin{equation*}
T_{q,K}(t) + T'_{q,K}(t) \, t + T''_{q,K}(t) \, t^2 \leq  c(q) \, \min
\big\{ K^{2q-2} t^{2}, t^{2q} \big\} ;
\end{equation*} 
moreover, the inequalities $T_{q,K}''(t) t - T_{q,K}'(t)\leq 2(q-1) T_{q,K}'(t)$
as well as $T''_{q,K}(t) t^2 \leq c(q) T'_{q,K}(t) t \leq c(q) T_{q,K}(t)$ hold
true on $\R^+$;
 \item[(iii)] If $u \colon \R^n \to \R^N$ is a function which is a.\,e. differentiable and $\mu(q):= 1- (\frac{q-1}{q})^2$, then for the function $v = T_{q,K}'(\kabs{u})
\kabs{u}^{-1} u$ the following inequality is satisfied a.\,e.:
\begin{equation*}
Du \cdot Dv \geq \sqrt{\mu(q)} \, \kabs{Du} \, \kabs{Dv} \geq
\sqrt{\mu(q)} \, T_{q,K}'(\kabs{u}) \, \kabs{u}^{-1} \, \kabs{Du}^2 \, .
\end{equation*}
\end{enumerate}

\begin{proof}
We first assume $K=1$. We set 
\begin{equation*}
T_{q,1} (t) = \left\{ \quad \begin{array}{ll} t^{2q} \qquad & \text{if } t
\leq 1 \\
a t^2 + bt + c & \text{if } t > 1
\end{array} \right.
\end{equation*}
for some coefficients $a,b,c \in \R$ to be determined as follows. The
$C^2$-regularity condition implies that the following linear system has to be
satisfied:
\begin{equation*}
\begin{pmatrix}
1 & 1 & 1 \\
2 & 1 & 0 \\
2 & 0 & 0 \\
\end{pmatrix}
\begin{pmatrix}
a \\ b \\ c
\end{pmatrix}
=
\begin{pmatrix}
1 \\ 2q \\ 2q (2q -1)
\end{pmatrix}
\Rightarrow
\begin{pmatrix}
a \\ b \\ c
\end{pmatrix}
=
\begin{pmatrix}
q (2q - 1) \\ -4q (q-1) \\ 1 -3q + 2q^2 
\end{pmatrix}.
\end{equation*}
We now calculate some crucial quantities. We first observe that
\begin{equation*}
T_{q,1}'(t)  =   \left\{ \quad \begin{array}{ll} 
				2q t^{2q-1} \qquad & \text{if } t \leq 1 \\
				2q  (2q - 1) t - 4q (q-1) & \text{if } t > 1
                              \end{array} \right.
\end{equation*}
is strictly increasing and positive on $\R^+$. Thus, we immediately obtain assertion (i) of the lemma. Furthermore, we have 
\begin{equation*}
T_{q,1}''(t) t - T_{q,1}'(t)  =   \left\{ \quad \begin{array}{ll} 
				4q (q-1) t^{2q-1} \qquad & \text{if } t \leq 1
\\
				4q (q-1) & \text{if } t > 1,
                              \end{array} \right.
\end{equation*}
which is again positive on $\R^+$. Moreover, for all $t \in \R^+$ we obtain
\begin{equation}
\label{T-estimate-quantitative}
T_{q,1}''(t) t - T_{q,1}'(t)  \leq  2(q-1) T_{q,1}'(t) \,,
\end{equation}
which in particular yields the inequality $T_{q,1}''(t) t^2 \leq c(q) T_{q,1}'(t)
t$ of assertion (ii). The last inequality $T'_{q,1}(t) t \leq c(q) T_{q,1}(t)$ is also checked easily. For the function $v$ given in (iii) we next compute
\begin{align*}
D_{i} v^{\alpha} & = T_{q,1}'(\kabs{u})
\frac{D_{i} u^{\alpha}}{\kabs{u}} + \big( T_{q,1}''(\kabs{u}) \kabs{u} - T_{q,1}'(\kabs{u})
\big)  \frac{D_{i} u \cdot u \, u^{\alpha}}{\kabs{u}^3} \,,\\
Du \cdot Dv & =  T_{q,1}'(\kabs{u})
\frac{\kabs{Du}^2}{\kabs{u}}  + \big( T_{q,1}''(\kabs{u}) \kabs{u} - T_{q,1}'(\kabs{u}) \big)
 \frac{\kabs{Du \cdot u}^2}{\kabs{u}^3}\,.
\end{align*}
In particular, this shows $Du \cdot Dv \geq 0$, using again the positivity of $T_{q,1}''(t) t - T_{q,1}'(t)$ and of $T_{q,1}'(t)$ on $\R^+$. Furthermore, we obtain
\begin{multline*}
\quad \kabs{Dv}^2 =  T_{q,1}'(\kabs{u})^2 \, \frac{\kabs{Du}^2}{\kabs{u}^2} + \big( T_{q,1}''(\kabs{u}) \kabs{u} - T_{q,1}'(\kabs{u})
\big)^2 \, \frac{\kabs{Du \cdot u}^2}{\kabs{u}^4} \\
	 {}+  2 \, T_{q,1}'(\kabs{u}) \, \big( T_{q,1}''(\kabs{u}) \kabs{u} -
T_{q,1}'(\kabs{u}) \big) \, \frac{\kabs{Du \cdot u}^2}{\kabs{u}^4}
	 \geq  T_{q,1}'(\kabs{u})^2 \, \frac{\kabs{Du}^2}{\kabs{u}^2}\,, \quad
\end{multline*}
which yields the second inequality in (iii). Now, keeping in mind the definition of $\mu(\cdot)$, we find via the previous estimate~\eqref{T-estimate-quantitative}
\begin{align*}
\kabs{Du \cdot Dv}^2 - \mu(q) \, \kabs{Du}^2 \kabs{Dv}^2 & = 
\Big(\frac{q-1}{q}\Big)^2 \, T_{q,1}'(\kabs{u})^2  \,
\frac{\kabs{Du}^4}{\kabs{u}^2} +  \big( T_{q,1}''(\kabs{u}) \kabs{u} - T_{q,1}'(\kabs{u})
\big)^2 \, \frac{\kabs{Du \cdot u}^4}{\kabs{u}^6} \\
	& \quad {}- \big( T_{q,1}''(\kabs{u}) \kabs{u} - T_{q,1}'(\kabs{u})
\big)^2 \, \frac{2q-1}{q^2}
	\frac{\kabs{Du \cdot u}^2 \kabs{Du}^2}{\kabs{u}^4}\\
	& \quad {} + 2 \, \Big(\frac{q-1}{q}\Big)^2 \, T_{q,1}'(\kabs{u}) \, 
	\big( T_{q,1}''(\kabs{u}) \kabs{u} - T_{q,1}'(\kabs{u}) \big) 
	\,  \frac{\kabs{Du \cdot u}^2 \kabs{Du}^2}{\kabs{u}^4} \\
	& \geq  \Big(\frac{q-1}{q}\Big)^2 \, T_{q,1}'(\kabs{u})^2  \,
\frac{\kabs{Du}^4}{\kabs{u}^2} + \big( T_{q,1}''(\kabs{u}) \kabs{u} - T_{q,1}'(\kabs{u})
\big)^2 \, \frac{\kabs{Du \cdot u}^4}{\kabs{u}^6} \\
	& \quad {}- 2 \, \frac{q-1}{q} \, T_{q,1}'(\kabs{u}) \, \big(
T_{q,1}''(\kabs{u}) 
	\kabs{u} - T_{q,1}'(\kabs{u}) \big) 
	\, \frac{\kabs{Du \cdot u}^2 \kabs{Du}^2}{\kabs{u}^4} \,,
\end{align*}
which is non-negative by Young's inequality. This finishes the proof of (iii) for the case $K=1$. To complete the proof of the lemma it is sufficient to observe that for general $K > 0$ the coefficients $a,b,c$ have to be replaced by $a K^{2q-2}, b K^{2q-1}, c K^{2q}$, and the conclusion then follows exactly as above.
\end{proof}
\end{lemma}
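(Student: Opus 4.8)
The plan is to build $T_{q,K}$ by an explicit $C^2$-gluing: keep the pure power $t\mapsto t^{2q}$ for $t\le K$ and, for $t>K$, switch to a quadratic polynomial $at^2+bt+c$ whose three coefficients are determined by matching the values of $T_{q,K}$, $T_{q,K}'$ and $T_{q,K}''$ to those of the power (and its first two derivatives) at $t=K$. This matching is a $3\times 3$ linear system with a triangular, hence nonsingular, coefficient matrix; for $K=1$ its solution is $a=q(2q-1)$, $b=-4q(q-1)$, $c=2q^2-3q+1$, and the case of general $K$ is obtained either by rescaling these to $aK^{2q-2}$, $bK^{2q-1}$, $cK^{2q}$ or, equivalently, by setting $T_{q,K}(t):=K^{2q}T_{q,1}(t/K)$. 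Since every inequality appearing in (i)--(iii) is invariant under the simultaneous rescalings $t\mapsto t/K$ and (in (iii)) $u\mapsto u/K$, it suffices to treat $K=1$, and I would record the rescaling only at the end, as in the statement itself.

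For (i) I would write $T_{q,1}'$ piecewise, namely $2q\,t^{2q-1}$ on $[0,1]$ and the affine function $2q(2q-1)t-4q(q-1)$ on $(1,\infty)$, note that the latter equals $2q>0$ at $t=1$, and conclude that $T_{q,1}'$ is positive and nondecreasing everywhere; this gives strict monotonicity, $T_{q,1}''\ge 0$ gives convexity, and $T_{q,1}=t^{2q}$ on $[0,1]$ holds by construction. For (ii) the key one-variable identity is $T_{q,1}''(t)\,t-T_{q,1}'(t)=4q(q-1)\min\{t^{2q-1},1\}$, which is nonnegative (a sign needed later in (iii)); it equals $2(q-1)T_{q,1}'(t)$ on $[0,1]$ and is $\le 2(q-1)T_{q,1}'(t)$ on $(1,\infty)$, since there the right-hand side is increasing and the left-hand side constant. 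Multiplying by $t$ gives $T_{q,1}''t^2\le(2q-1)T_{q,1}'t$; the bound $T_{q,1}'(t)t\le 2q\,T_{q,1}(t)$ is an equality on $[0,1]$, while on $(1,\infty)$ the difference equals a nonnegative multiple of $(t-1)^2$; and the two-sided growth estimate $T_{q,1}+T_{q,1}'t+T_{q,1}''t^2\le c(q)\min\{t^2,t^{2q}\}$ follows by evaluating the left side as $(1+2q+2q(2q-1))\,t^{2q}$ on $[0,1]$ and as an explicit quadratic in $t$ on $(1,\infty)$, compared against $t^{2q}$ respectively $t^2$.

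For (iii) I would differentiate $v=T_{q,1}'(\kabs u)\kabs u^{-1}u$ using $D_i\kabs u=\kabs u^{-1}(u\cdot D_iu)$, which gives
\[
D_iv^\alpha=T_{q,1}'(\kabs u)\,\frac{D_iu^\alpha}{\kabs u}+\big(T_{q,1}''(\kabs u)\kabs u-T_{q,1}'(\kabs u)\big)\,\frac{(u\cdot D_iu)\,u^\alpha}{\kabs u^3},
\]
from which one reads off $Du\cdot Dv=A\kabs u^{-1}\kabs{Du}^2+B\kabs u^{-3}\kabs{Du\cdot u}^2$ together with $\kabs{Dv}^2=A^2\kabs u^{-2}\kabs{Du}^2+(B^2+2AB)\kabs u^{-4}\kabs{Du\cdot u}^2$, where $A:=T_{q,1}'(\kabs u)>0$ and $B:=T_{q,1}''(\kabs u)\kabs u-T_{q,1}'(\kabs u)\ge 0$ by (ii). The second inequality in (iii) is then immediate, since $B\ge 0$ yields $Du\cdot Dv\ge A\kabs u^{-1}\kabs{Du}^2$ and $\mu(q)\le 1$. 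For the first, abbreviate $R:=\kabs{Du}^2$ and $Q:=\kabs{Du\cdot u}^2/\kabs u^2$, so that $0\le Q\le R$ by Cauchy--Schwarz; a direct computation shows that $\kabs u^2\big(\kabs{Du\cdot Dv}^2-\mu(q)\kabs{Du}^2\kabs{Dv}^2\big)$ equals the quadratic form $(1-\mu(q))A^2R^2+\big(2(1-\mu(q))AB-\mu(q)B^2\big)RQ+B^2Q^2$. Because $RQ\ge 0$, one may decrease its middle coefficient, and the single point at which the precise value $\mu(q)=1-(\tfrac{q-1}{q})^2$ is used is the inequality $2(1-\mu(q))AB-\mu(q)B^2\ge -2\tfrac{q-1}{q}AB$: writing $\rho:=\tfrac{q-1}{q}$, so that $1-\mu(q)=\rho^2$, it reduces after dividing by $(1+\rho)B$ to $(1-\rho)B\le 2\rho A$, which is precisely the bound $B\le 2(q-1)A$ of (ii). After this replacement the form becomes the perfect square $(\rho AR-BQ)^2\ge 0$, which proves (iii). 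I expect the main obstacle to be the bookkeeping in (iii): carrying the radial/tangential split $(R,Q)$ cleanly through the differentiation of $v$, and checking that after the $\mu(q)$-tuned estimate the surviving cross term reassembles into a square (equivalently, an application of Young's inequality); everything else is elementary single-variable calculus.
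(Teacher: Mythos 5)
Your proposal is correct and follows essentially the same route as the paper: the identical $C^2$-gluing of $t^{2q}$ with the quadratic $q(2q-1)t^2-4q(q-1)t+(2q^2-3q+1)$ (rescaled for general $K$), the same piecewise verification of the derivative inequalities, and for (iii) the same expansion of $\kabs{Du\cdot Dv}^2-\mu(q)\kabs{Du}^2\kabs{Dv}^2$ reduced via the key bound $T''_{q,1}(t)t-T'_{q,1}(t)\le 2(q-1)T'_{q,1}(t)$ to a perfect square (the paper phrases this last step as Young's inequality, you as completing the square, which is the same estimate). The $(A,B,R,Q,\rho)$ bookkeeping and the scaling remark $T_{q,K}(t)=K^{2q}T_{q,1}(t/K)$ are only notational streamlining, not a different argument.
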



\section{Higher differentiability of weak solutions}

In this section we start working on the solution $u$ of the parabolic
system~\eqref{equation}. First, we prove an upper bound for the average of \emph{weighted} norms of $Du$. This will be done in Section~\ref{sec_apriori} and serves also to explain the general strategy to obtain such estimates. We will then extract higher regularity properties of the solution, still following the ideas given in Section~\ref{sec_apriori}. More precisely, as final result of this section, we are interested in pathwise higher integrability of the gradient $Du$, which will be the core of the proof of the regularity result given in Theorem~\ref{main-result}. 

\subsection{An a~priori estimate}
\label{sec_apriori}

From Definition~\ref{def_weak_solution_ito} of a weak solution $u$ to~\eqref{equation}, no a~priori information is available on the expected value of the solution. In particular, we only know that $u(\omega)$ belongs to the space $V^2(D_T,\R^N)$ for $P$-almost every $\omega \in \Omega$, but it is still possible that the average $E[\knorm{u}_{V^2(D_T,\R^N)}]$ is infinite. Even if this cannot be excluded, we can win an a~priori information on the average of weighted norms of $u$.

The strategy for a~priori estimates for deterministic elliptic of parabolic systems is simply to ``test'' the equation with the solution (or some modification of it), and an estimate then follows by employing the regularity and growth properties of the system. For stochastic systems, testing with an the appropriate version is replaced by the application of an It\^o's formula for Banach spaces. Then, a first pathwise estimate follows (Step~1). Since we are interested in averages, the first estimate is rewritten (Step~2) by introducing weights depending on the solution itself. With these weights we can finally take the expectation (Step~3) and end up with the desired estimate, which we now state in its precise form.

\begin{lemma}
\label{lemma_apriori}
Let $u \in V^2(D_T,\R^N)$ be a weak solution to the initial boundary value problem
to~\eqref{equation} under the assumptions~\eqref{GV-A}$_{1,2}$,~\eqref{GV-H}$_{1,2}$ and with $u(\ccdot,0) = u_0(\cdot) \in L^2(D,\R^N)$. Suppose further that the
smallness condition $L_H^2 < 2 \kappa^{-1} ( 1 - (1-\nu^2)^{1/2})$ is satisfied,
and let $D_0 \subset D$ with $d_0 := \dist(D_0,\partial D) > 0$. Then there holds
\begin{equation*}
E \, \Big[ \int_0^T e^{-\int_0^t c_0 G_0(u,f)
\ds } \knorm{Du(t)}_{L^2(D_0)}^2 \dt \Big]
	\leq c_0 \big( \knorm{u_0}_{L^2(D)}^2 + 1 
	+ \E \big[ \knorm{f_H}_{L^2(D_T)}^2 \big] \big) 
\end{equation*}
for a constant $c_0$ depending only on $D,L,L_H,d_0,\kappa$ and $\nu$, and a function $G_0(u,f)$ given by~\eqref{aprioriG0}.

\begin{proof}
\textbf{Step~1.} A preliminary pathwise estimate. 
We start by multiplying the equation~\eqref{equation} with a standard cut-off function $\eta \in C^{\infty}(D,[0,1])$ which satisfies  $\eta \equiv 1$ on $D_0$ and $\kabs{D\eta}
\leq c(d_0)$. Obviously, the map $\eta \dq u$ has the same properties concerning
integrability and measurability as $u$, and the It\^o formula from Theorem
\ref{Ito-II} in Banach spaces may be applied with the Gelfand triple
$W_0^{1,2}(D,\R^N) \subset L^2(D,\R^N) \subset W^{-1,2}(D,\R^N)$. This yields
the existence of a subset $\Omega' \subset \Omega$ of full measure
$P(\Omega')=1$  and a function $u' \colon [0,T] \to W^{1,2}(D,\R^N)$ which satisfies: $u'$ is $\mathcal{F}_t$-adapted on $[0,T] \times
\Omega'$, continuous in~$t$ for every $\omega \in \Omega'$, and $u' = u \eta$ holds
for $P \times \Lm^1$-almost all $(t,\omega) \in [0,T] \times \Omega$. Moreover, using the integration by parts formula, we have for every $\omega \in \Omega'$
\begin{multline}
\label{apriori1}
\knorm{u'(t)}^2_{L^2(D)} + 2 \int_0^t \sp{ D( u(s)
\, \eta^2)}{ A(\ccdot,s,u(s),Du(s))}_{L^2(D)} \ds \\
	=  \knorm{u_0 \, \eta}^2_{L^2(D)} 
	+ 2 \int_0^t \sp{ u'(s) \, \eta}{ H(\cdot,s,Du(s)) \,
dB_s}_{L^2(D)} + \int_0^t \knorm{H(\cdot,s,Du(s)) \, \eta}_{L^2(D)}^2
\ds
\end{multline}
(with the convention $\kabs{M}^2 = \sum_{i,j=1}^n M_{ij}^2$ for every $n \times
n$ matrix). Next we need to estimate the second integral on the left-hand side of the previous identity, employing the assumptions~\eqref{GV-A}. For this purpose, we first observe with~\eqref{GV-A}$_{1,2}$ and Young's inequality that
\begin{align*}
\lefteqn{ \hspace{-0.5cm} \sp{Du(s) \, \eta^2}{A(\ccdot,s,u(s),Du(s))}_{L^2(D)} }\\
  & = \int_0^1 \sp{Du(s) \, \eta^2}{D_z A(\ccdot,s,u(s),rDu(s)) \, Du(s)}_{L^2(D)} \dr + \sp{Du(s) \, \eta^2}{A(\ccdot,s,u(s),0)}_{L^2(D)} \\
  & \geq \frac{1}{\kappa} \big(1 - (1-\nu^2)^{\frac{1}{2}} - \epsilon \big) \, \knorm{Du(s) \, \eta}_{L^2(D)}^2 - c(\epsilon^{-1}, L) \, \big(  \knorm{u(s)}_{L^{\frac{2 (n+2)}{n}}(D)}^{\frac{2(n+2)}{n}} + \knorm{f(s)}_{L^a(D)}^a \big)
\intertext{for all $s \in (0,T)$ and every $\epsilon > 0$. Moreover, we find}
\lefteqn{ \hspace{-0.5cm} \kabs{ \sp{u(s) \otimes D\eta \, \eta}{A(\ccdot,s,u(s),Du(s))}_{L^2(D)} } }\\ 
  & \leq \frac{\epsilon}{\kappa} \knorm{Du(s) \, \eta}_{L^2(D)}^2 + c(L,\epsilon^{-1},\kappa,d_0) \, \big( \knorm{u(s)}_{L^2(D)}^2 + \knorm{u(s)}_{L^{\frac{2 (n+2)}{n}}(D)}^{\frac{2(n+2)}{n}} + \knorm{f(s)}_{L^a(D)}^a \big) \,.
\end{align*}
Next, the integrand of the last term on the right-hand side of~\eqref{apriori1} is bounded via~\eqref{GV-H}$_{1,2}$ by
\begin{equation*}
\knorm{H(\cdot,s,Du(s)) \, \eta}_{L^2(D)}^2 \leq \big(L_H^2 + \frac{\epsilon}{\kappa}\big) \, \knorm{Du(s) \, \eta}_{L^2(D)}^2 + c(\epsilon^{-1},\kappa) \, \knorm{f_H(s)}_{L^2(D)}^2 \,.
\end{equation*}
Combining the last three inequalities (here enters the smallness assumption on $L_H$) with~\eqref{apriori1}, choosing $\epsilon$ sufficiently small (in dependency of $L_H$ and $\nu$) and using H\"older's inequality, we thus end up with the announced pathwise estimate
\begin{align}
\label{apriori2}
\lefteqn{ \hspace{-0.5cm} \knorm{u'(t)}^2_{L^2(D)} + c^{-1}(L_H,\kappa,\nu) \int_0^t \knorm{Du(s) \, \eta}_{L^2(D)}^2 \ds } \nonumber \\
  & \leq \knorm{u_0 \, \eta}^2_{L^2(D)}  + 2 \int_0^t \sp{ u'(s) \, \eta}{ H(\cdot,s,Du(s)) \, dB_s}_{L^2(D)} \nonumber \\
  & \quad {} + c_0(D,L,L_H,d_0,\kappa,\nu) \int_0^t \big( 1 + \knorm{u(s)}_{L^{\frac{2 (n+2)}{n}}(D)}^{\frac{2(n+2)}{n}} \!\! + \knorm{f(s)}_{L^a(D)}^a +  \knorm{f_H(s)}_{L^2(D)}^2 \big) \ds \,.
\end{align}

\textbf{Step~2.} An improved pathwise estimate. The next step consists in getting a pathwise estimate where the bound on the right-hand side contains a deterministic part almost \emph{independent} of the weak solution and the function $f$, and a stochastic part which might still depend on the solution. We start by defining
\begin{equation}
\label{aprioriG0}
G_0(u,f)(s) = 1 + \knorm{u(s)}_{L^{\frac{2 (n+2)}{n}}(D)}^{\frac{2(n+2)}{n}} + \knorm{f(s)}_{L^a(D)}^a
\end{equation}
for $s \in (0,T)$. Obviously, $G_0$ belongs to $L^1(0,T)$ with probability one. Then we use a Gronwall-type argument, by applying the one-dimensional It\^o-formula to ${\rm exp} (-\int_0^t c_0 G_0(u,f)(\tilde{s}) \, d\tilde{s}) \, (1+\knorm{u'(t)}^2_{L^2(D)})$, as e.\,g. in~\cite[Proof of Theorem~5.1]{Schmalfuss97}. Thus, we get
\begin{align}
\label{apriori3}
\lefteqn{ \hspace{-0.5cm} e^{-\int_0^t c_0 G_0(u,f)(\tilde{s}) \, d\tilde{s}} \,
\knorm{u'(t)}^2_{L^2(D)}  + c^{-1}(L_H,\kappa,\nu) \int_0^t
	e^{-\int_0^s c_0 G_0(u,f)(\tilde{s}) \, d\tilde{s} } \knorm{Du(s) \,
\eta}_{L^2(D)}^2 \ds} \nonumber \\
	& \leq  \knorm{u (0)\, \eta}^2_{L^2(D)} + 1 + 
	2 \int_0^t e^{-\int_0^s c_0 G_0(u,f)(\tilde{s}) \, d\tilde{s} } \, 
	\sp{ u'(s) \, \eta}{ H(\cdot,s,Du(s)) \, dB_s}_{L^2(D)} \nonumber
\\
	& \quad {}+ c_0 \int_0^t e^{-\int_0^s c_0 G_0(u,f)(\tilde{s}) \, d\tilde{s}
} \, \knorm{f_H(s)}_{L^2(D)}^2 \ds \,.
\end{align}
Note that we here have omitted a negative term which appeared on the right-hand side and the positive term containing the $1$ on the left-hand side. This is the desired improved pathwise estimate. We note that $u$ and $f$ still appear in the function $G_0$ in the deterministic integral on the right-hand side, but in a way that for greater values of $u$ or $f$ the integral gets smaller. At the same time obviously also the exponential factor on the left-hand side will get smaller, but this allows us now to pass to Step~3.  

\label{continuous_in_t}
\textbf{Step~3.} An estimate for the expected value with weights. Uniform estimates for the average of the weak solution (e.\,g. for expressions of the form $E[\knorm{u}]$ for some norm of $u$ or $Du$) of course cannot be expected under such weak assumptions as we have supposed in the lemma. But the previous inequality~\eqref{apriori3} now allows us to get a weighted inequality, with no stochastic terms on the right-hand side. Since the expectation of the stochastic integral is not a~priori known to vanish, we now apply a stopping time argument. 

From identity~\eqref{apriori1} it follows that the process $\knorm{ u'(t)}_{L^{2}(D)}^{2}$ has a continuous version in $t$, used in the following argument. For every $R>0$ we introduce the random time
\[
\tau_{R}:=\inf \Big\{  t \in [0,T]  \colon \int_{0}^{t} \knorm{u'(s)}_{L^{2}(D)}^{2} \, \knorm{H(s,Du(s)) \eta }_{L^{2}(D)  }^{2} \ds > R \Big\}
\]
with $\tau_{R}=T$ when the set is empty. We note that $\knorm{u'(s)}_{L^{2}(D)}^{2}  \knorm{H(s,Du(s)) \eta }_{L^{2}(D)}^{2}$ is in $L^1(0,T)$ with probability one, because of the property $u\in V_{0}^{2}(D_{T}%
,\mathbb{R}^{N})$ and the assumption~\eqref{GV-H}$_2$ on $H$. 
Hence, we have in particular $P(\lim_{R\rightarrow\infty}\tau_{R}=T)=1$ and
\[
P \big(  \lim_{R\rightarrow\infty} \knorm{ u'(t\wedge\tau_{R}) }_{L^{2}(D)}^{2}= \knorm{ u'(t) }_{L^{2}(D)}^{2} \big) = 1
\]
for every $t \in [0,T]$. Now we take inequality~\eqref{apriori3} at time $t\wedge\tau_{R}$ and get
\begin{align*}
\lefteqn{ \hspace{-0.5cm} e^{-\int_0^{t\wedge\tau_{R}} c_0 G_0(u,f)(\tilde{s}) \, d\tilde{s}} \,
\knorm{u'({t\wedge\tau_{R}})}^2_{L^2(D)}  + c^{-1} \int_0^{t\wedge\tau_{R}}
	e^{-\int_0^s c_0 G_0(u,f)(\tilde{s}) \, d\tilde{s} } \knorm{Du(s) \,
\eta}_{L^2(D)}^2 \ds}  \\
	& \leq  \knorm{u (0)\, \eta}^2_{L^2(D)} + 1 + 
	2 \int_0^{t\wedge\tau_{R}} e^{-\int_0^s c_0 G_0(u,f)(\tilde{s}) \, d\tilde{s} } \, 
	\sp{ u'(s) \, \eta}{ H(\cdot,s,Du(s)) \, dB_s}_{L^2(D)} 
\\
	& \quad {}+ c_0 \int_0^{t\wedge\tau_{R}} e^{-\int_0^s c_0 G_0(u,f)(\tilde{s}) \, d\tilde{s}
} \, \knorm{f_H(s)}_{L^2(D)}^2 \ds \,.
\end{align*}
Now we have
\begin{multline*}
\quad \int_{0}^{t\wedge\tau_{R}}e^{-\int_{0}^{s}c_{0}G_0(u,f)\,(  \tilde
{s}) \,d\tilde{s}} \, \sp{u'(s)\eta}{H(s,Du(s)) \, dB_{s}}_{L^{2}(D)} \\
=\int_{0}^{t}e^{-\int_{0}^{s} c_{0} G_0(u,f)\,(\tilde{s})
\,d\tilde{s}} \, 1_{s\leq\tau_{R}} \sp{u'(s)\eta}
{H(s,Du(s)) \, dB_{s}}_{L^{2}(D)} \quad
\end{multline*}
and
\begin{multline*}
\quad \int_{0}^{t} e^{-2 \int_{0}^{s}c_{0}G_0(u,f)\,(\tilde{s})
\,d\tilde{s}} 1_{s\leq\tau_{R}} \knorm{ u'(s)}_{L^{2}(D)}^{2}
\knorm{H(s,Du(s)) \eta }_{L^{2}(D)}^{2} \ds\\
 \leq \int_{0}^{t\wedge\tau_{R}} \knorm{ u'(s) }_{L^{2}(D)}^{2}
 \knorm{ H(s,Du(s)) \eta }_{L^{2}(D)}^{2} \ds \leq R \quad
\end{multline*}
by definition of $\tau_{R}$. Thus the stopped stochastic integral above is a
martingale, hence with expected value zero. This implies
\begin{multline*}
\quad E \Big[  e^{-\int_{0}^{t\wedge\tau_{R}}c_{0}G_0(u,f)\,(
\tilde{s})  d\tilde{s}}\, \knorm{u'(t\wedge\tau
_{R}) }_{L^{2}(D)}^{2}  \Big]  +E\Big[ c^{-1} \int_{0}^{t\wedge
\tau_{R}}e^{-\int_{0}^{s}c_{0}G_0(u,f)\,(\tilde{s})  d\tilde{s}
} \knorm{Du(s) \,\eta }_{L^{2}(D)}^{2} \ds \Big]
\\
\leq \Vert\triangle_{k,h}u_0\,\eta\Vert_{L^{2}(D)}^{2}+1+ c_0 \, E\Big[
\,\int_{0}^{T}\, \knorm{ f_{H}(s) }_{L^{2}(D)}^{2} \ds \, \Big]  \,. \quad
\end{multline*}
On the left-hand-side we now apply Fatou's lemma to the first term and the monotone convergence theorem to the second one, and we get
\begin{multline*}
\quad E\Big[  e^{-\int_{0}^{t}c_{0}G_0(u,f)\,(\tilde{s})  d\tilde{s}}\, \knorm{u'(t) }_{L^{2}(D)}^{2} \Big]  +E \Big[ c^{-1} \int_{0}^{t}e^{-\int_{0}^{s}c_{0}
G_0(u,f)\,(\tilde{s})  d\tilde{s}} \knorm{Du(s) \,\eta }_{L^{2}(D)}^{2} \ds \Big]  \\
\leq \knorm{ u_0\,\eta }_{L^{2}(D)}^{2}+1+ c_0 \, E \Big[
\int_{0}^{T} \knorm{ f_{H}(s) }_{L^{2}(D)}^{2} \ds \, \Big] \quad
\end{multline*}
for every $t\in [0,T]$. This proves the bound claimed by the lemma.
\end{proof}
\end{lemma}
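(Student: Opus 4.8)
The plan is to carry out exactly the three-step program announced before the statement: a pathwise energy identity, a Gronwall-type reweighting, and a stopping-time argument that legitimizes taking expectations. First I would localize by a standard cut-off $\eta$ with $\eta\equiv 1$ on $D_0$ and $\kabs{D\eta}\leq c(d_0)$, so that $\eta u(t)$ lies in $W^{1,2}_0(D,\R^N)$ with the same measurability and integrability as $u$. Since $u$ is merely a weak solution --- not a priori a limit of smooth approximations --- the rigorous substitute for ``testing the equation with $u\eta^2$'' is the Hilbert-space It\^o formula (Theorem~\ref{Ito-II}) applied with the Gelfand triple $W^{1,2}_0(D,\R^N)\subset L^2(D,\R^N)\subset W^{-1,2}(D,\R^N)$; the growth assumption~\eqref{GV-A}$_1$ together with $u(\omega)\in V^2(D_T,\R^N)$ is precisely what puts $\diverg A(\cdot,s,u,Du)$ into $W^{-1,2}(D,\R^N)$ with the integrability demanded there, so part~(iii) of that theorem produces the continuous-in-time version $u'$ of $\eta u$ and, after an integration by parts, the energy identity~\eqref{apriori1}. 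To control the $A$-term I would split $A(\cdot,s,u,Du)=\int_0^1 D_zA(\cdot,s,u,rDu)\,Du\,\dr+A(\cdot,s,u,0)$ and extract coercivity from~\eqref{GV-A}$_2$ via $\sp{D_zA\,\xi}{\xi}=\kappa^{-1}\big(\kabs{\xi}^2-\sp{\xi-\kappa D_zA\,\xi}{\xi}\big)\geq\kappa^{-1}\big(1-(1-\nu^2)^{1/2}\big)\kabs{\xi}^2$, bounding $A(\cdot,s,u,0)$ and the $D\eta$-contribution by~\eqref{GV-A}$_1$ and Young; the resulting martingale-free lower-order terms are then exactly what is collected into the function $G_0(u,f)$ of~\eqref{aprioriG0}. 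The last integral in~\eqref{apriori1} is estimated by~\eqref{GV-H}$_{1,2}$ as $\leq(L_H^2+\tfrac{\epsilon}{\kappa})\knorm{Du\,\eta}_{L^2(D)}^2+c\,\knorm{f_H}_{L^2(D)}^2$, and the hypothesis $L_H^2<2\kappa^{-1}(1-(1-\nu^2)^{1/2})$ is precisely what leaves, after absorption and choosing $\epsilon$ small, a strictly positive multiple of $\int_0^t\knorm{Du\,\eta}^2_{L^2(D)}\ds$ on the left-hand side, i.e.\ the pathwise estimate~\eqref{apriori2}.

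Next, observing that $G_0(u,f)$ lies in $L^1(0,T)$ with probability one, I would apply the one-dimensional It\^o formula to $t\mapsto\exp\!\big(-\int_0^t c_0 G_0(u,f)\,d\tilde s\big)\big(1+\knorm{u'(t)}^2_{L^2(D)}\big)$: the drift generated by the exponential cancels the bad deterministic integral $c_0\int_0^t G_0\,\ds$ on the right of~\eqref{apriori2}, at the cost of merely damping the coercive term by the still-positive weight, which yields the weighted pathwise inequality~\eqref{apriori3} whose right-hand side retains only the stochastic integral and the harmless $\knorm{f_H}^2_{L^2(D)}$-term. Since no $L^1(\Omega)$-bound on $\knorm{u}_{V^2}$ is available, that stochastic integral is a priori only a local martingale, so I would introduce the stopping times $\tau_R=\inf\{t:\int_0^t\knorm{u'(s)}^2_{L^2(D)}\knorm{H(s,Du(s))\,\eta}^2_{L^2(D)}\ds>R\}$ (and $\tau_R=T$ otherwise), which increase to $T$ almost surely because $u\in V^2_0(D_T,\R^N)$ and~\eqref{GV-H}$_2$ make that integrand finite a.s. Evaluating~\eqref{apriori3} at $t\wedge\tau_R$, the stopped stochastic integral has quadratic variation bounded by $R$ (the exponential weight being $\leq1$), hence is a genuine martingale of zero expectation, so applying $E[\,\cdot\,]$ removes it; letting then $R\to\infty$ with Fatou's lemma on the boundary term and monotone convergence on the nonnegative coercive integral, discarding the first (nonnegative) term, specializing to $t=T$, and using $\eta\equiv1$ on $D_0$, gives the asserted bound.

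The genuinely delicate point --- the one place where the stochastic setting forces something beyond the classical deterministic argument --- is the interplay in the last two steps between the reweighting and the stopping: because one has no integrability in $\omega$ whatsoever, the exponential weight is needed to render the drift harmless, while the stopping time (together with the weight being bounded by one) is what turns the stochastic integral into a true martingale whose expectation can legitimately be dropped before passing to the limit. By comparison, the coercivity algebra extracted from~\eqref{GV-A}$_2$, the bookkeeping that makes the smallness condition on $L_H$ effective, and the replacement of a naive test function by Theorem~\ref{Ito-II} are routine once this framework is in place.
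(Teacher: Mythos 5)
Your proposal is correct and follows essentially the same route as the paper's proof: the Hilbert-space It\^o formula with the Gelfand triple $W^{1,2}_0 \subset L^2 \subset W^{-1,2}$ applied to $\eta u$, the decomposition $A=\int_0^1 D_zA(\cdot,s,u,rDu)\,Du\,\dr+A(\cdot,s,u,0)$ with coercivity extracted from~\eqref{GV-A}$_2$, the weight $G_0$ with the one-dimensional It\^o/Gronwall trick, and the stopping times $\tau_R$ built from $\int_0^t\knorm{u'}^2\knorm{H\eta}^2\ds$ followed by Fatou and monotone convergence. The only cosmetic deviation is that the paper keeps the estimate for every $t\in[0,T]$ rather than specializing to $t=T$, which changes nothing in the conclusion.
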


\subsection{Existence of second space derivatives}
\label{section_second_derivatives}

We next study the existence of second order space derivatives. For
deterministic elliptic and parabolic partial differential equations it is a
standard procedure to establish the existence of higher order derivatives by
finite difference quotients methods. The basic idea in the deterministic case is
the following. Once the norm of finite difference quotients of $Du$ 
are kept under control \emph{independently} of its step size, i.\,e. $\knorm{\dq
Du}_{L^p} \leq C$ with $C$ independent of $h$ and with $p \in (1,\infty)$, then
the weak derivative $D_k Du$ exists and has finite norm in $L^p$ (and as long as
one is away from the boundary also the reverse it true). So uniformly bounded
difference quotients of $Du$ can heuristically be considered as second
derivatives $D_k Du$. This uniform bound in turn is usually achieved by
``testing the system'' with appropriate modifications of the solutions (formally
one might think of $\triangle_{k,-h} \dq u$) and relies on the one hand on the
ellipticity of the vector field $A$ and on the other hand on its regularity with
respect to the $x$ and $u$ variables (we note that it seems mandatory to have at
least Lipschitz-regularity in order to expect the existence of full second space
derivatives).

For the stochastic perturbed system~\eqref{equation} the approach for proving the existence of higher order derivatives is still very similar, but we need some
modifications due to the stochastic terms. The above strategy (with testing replaced by the the use of the It\^o formula in Banach spaces) applied to our stochastic system gives --~after some standard, though very technical computations~-- a preliminary pathwise estimate for finite difference quotients of $u$ and $Du$ (this corresponds in some sense to Step~1 in the proof of the previous Lemma~\ref{lemma_apriori}). But since this estimate still involves a stochastic integral, it is not yet possible to gain immediately any information on second order derivatives. In a second step, this pathwise estimate is rewritten (here again some Gronwall-type inequality is needed), which allows in the third step to take the expectation of a weighted version of $\knorm{\dq Du}_{L^2}$ and to bound it independently of the stepsize $h$. This is still sufficient to deduce the existence of $D_k Du$ with probability one (see Section~\ref{sect-weak-der}). 

Given a deterministic initial condition $u_0$ (sufficiently regular) we now give
the precise statement on the boundedness of the expectation of finite difference
quotients of $Du$.

\begin{lemma}
\label{Diff-quotients}
Let $u \in V^2(D_T,\R^N)$ be a weak solution to the initial boundary value problem
to~\eqref{equation} under the assumptions~\eqref{GV-A},~\eqref{GV-H} and with
$u(\ccdot,0) = u_0(\cdot) \in W^{1,2}(D,\R^N)$. Suppose further that the
smallness condition $L_H^2 < 2 \kappa^{-1} ( 1 - (1-\nu^2)^{1/2})$ is satisfied,
and let $D' \subset D$ with $d' := \dist(D',\partial D) > 0$. Then there holds
\begin{multline*}
\quad \sup_{\kabs{h} < d'} \, \E \, \Big[ \, \sup_{t \in (0,T)} e^{-\int_0^t c'
G'(u,f) \ds } \knorm{\dq u(t)}^2_{L^2(D')} + \int_0^T e^{-\int_0^t c' G'(u,f)
\ds } \knorm{D \dq u(t)}_{L^2(D')}^2 \dt \Big] \\
	 \leq c' \, \big(  \knorm{D_k u_0}_{L^2(D)}^2 + 1 
	+ \E \big[ \knorm{f_H^{\frac{a}{a-2}}}_{L^2(D_T)}^2 \big] \big) \quad
\end{multline*}
for every $k \in \{1,\ldots,n\}$, a constant $c'$ depending only on
$n,D,T,L,L_H,d',\kappa$, and $\nu$, and a function $G'(u,f)$ given by~\eqref{higher2}
further below.

\begin{proof}
We here proceed similarly to the proof of Lemma~\ref{lemma_apriori}, with the main difference that instead of $u$ we now need to estimate the difference quotients $\dq u$.

\textbf{Step~1.} We first observe that if $u$ is a solution of~\eqref{equation}, then for all $t \in [0,T]$ by definition also the following identity holds true for $P$-a.\,s.:
\begin{align*}
\sp{\eta \, \dq u(t) - \eta \, \dq u_0}{\varphi}_{L^2(D)} 
	& =   \int_0^t \sp{\eta \, \diverg 
	\dq A(\cdot,s,u(s),Du(s))}{\varphi}_{W^{-1,2}(D);W^{1,2}_0(D)} \ds \\
	& \quad {} + \int_0^t \sp{\varphi}{\eta \, \dq H(\cdot,s,Du(s)) \big)
\, dB_s}_{L^2(D)} 
\end{align*}
for all $\varphi \in W^{1,2}_0(D,\R^N)$. Here $k \in \{1,\ldots,n\}$ is
arbitrary, $h \in \R$ with $\kabs{h} < d'$, and for sets $D' \subset D_0 \subset D$ we denote by $\eta \in C^{\infty}(D_0,[0,1])$ a standard cut-off function satisfying $\eta \equiv 1$ on $D'$ and $\kabs{D\eta} \leq c(d')$. Therefore, the map $\eta \dq u$ has the same properties concerning integrability and measurability as $u$, and the It\^o formula from Theorem~\ref{Ito-II} in Banach spaces is again applied with the Gelfand triple
$W_0^{1,2}(D,\R^N) \subset L^2(D,\R^N) \subset W^{-1,2}(D,\R^N)$. We hence get
a subset $\Omega' \subset \Omega$ of full measure $P(\Omega')=1$ and a function $u_k' \colon [0,T] \to W^{1,2}(D,\R^N)$ with the following properties: $u_k'$ is $\mathcal{F}_t$-adapted on $[0,T] \times \Omega'$, continuous in $t$ for every $\omega \in \Omega'$, and satisfies $u_k' =  \dq u \eta$ for $P \times \Lm^1$-almost all $(t,\omega) \in [0,T] \times \Omega$. Moreover, for every $\omega \in \Omega'$ we have
\begin{align}
\label{higher1}
\lefteqn{\hspace{-0.5cm} \knorm{u_k'(t)}^2_{L^2(D)} + 2 \int_0^t \sp{ D(\dq u(s)
\, \eta^2)}{ \dq A(\ccdot,s,u(s),Du(s))}_{L^2(D)} \ds } \nonumber \\
	& =  \knorm{\dq u_0 \, \eta}^2_{L^2(D)} 
	+ 2 \int_0^t \sp{ u_k'(s) \, \eta}{ \dq H(\cdot,s,Du(s)) \,
dB_s}_{L^2(D)} \nonumber \\
	& \quad {}+ \int_0^t \knorm{\dq H(\cdot,s,Du(s)) \, \eta}_{L^2(D)}^2
\ds \,. \quad
\end{align}
Our first aim is to deduce a pathwise estimate for finite
differences of $u$ and $Du$, respectively. To this end we first study in detail
the second term on the left-hand side. For almost every $t \in [0,T]$ we
decompose the finite difference quotient applied on $A(x,t,u,Du)$ as follows
\begin{align}
\label{def-ABC}
\lefteqn{\hspace{-0.75cm} \dq A(x,t,u(x),Du(x))} \nonumber  \\
	& =  h \, \big[ A(x+h e_k,t, u(x+h e_k),Du(x+h e_k)) -  A(x + h e_k,t,
u(x+h e_k),Du(x)) \big] 
	\nonumber \\
	& \quad {} + h \, \big[ A(x + h e_k,t, u(x+h e_k),Du(x)) -  A(x +
e_k,t, u(x),Du(x)) \big] \nonumber \\
	& \quad {} + h \, \big[ A(x + h e_k,t, u(x),Du(x)) -  A(x,t,
u(x),Du(x)) \big] \nonumber \\
	& =  \int_0^1 D_z A(x + h e_k,t, u(x+h e_k),Du(x) + r \, h \, \dq
Du(x)) \dr \, 
	\dq Du(x) \nonumber  \\
	& \quad {} + \int_0^1 D_u A(x + h e_k,t, u(x) + r \, h \, \dq
u(x),Du(x)) \dr \, \dq u(x) 
	\nonumber \\ 
	& \quad {} + \int_0^1 D_x A(x + r \, h e_k,t, u(x),Du(x)) \dr
\nonumber \\
	& =:  {\cal A} (h) + {\cal B} (h) + {\cal C} (h)
\end{align}
with the obvious abbreviations. Using the assumptions~\eqref{GV-A},
H\"older's and Young's inequality, we now estimate the different terms arising
from this decomposition in equation~\eqref{higher1} on time slices $t \in (0,T)$
(on such slices we omit the notion of $t$). We first find for almost every $t
\in (0,T)$ and every $\epsilon > 0$
\begin{align*}
\lefteqn{\hspace{-0.75cm} \sp{D(\dq u \, \eta^2)}{{\cal A} (h) }_{L^2(D)} } \\
	& =  \kappa^{-1} \sp{D(\dq u \, \eta^2)}{D \dq u }_{L^2(D)} 
	- \kappa^{-1} \sp{D(\dq u \, \eta^2)}{D \dq u - \kappa \, {\cal A} (h)
}_{L^2(D)} \\
	& \geq  \kappa^{-1} \knorm{D \dq u \, \eta}_{L^2(D)}^2 
	- 2 \epsilon \, \knorm{D \dq u \, \eta}_{L^2(D)}^2  - c(\kappa,\epsilon)
\, \knorm{\dq u \, D\eta}_{L^2(D)}^2 \\
	& \quad {} - \kappa^{-1} \, (1-\nu^2)^{\frac{1}{2}} \, \knorm{D \dq u \,
\eta}_{L^2(D)}^2 \\
	& \geq  \big( \kappa^{-1} (1 - (1-\nu^2)^{\frac{1}{2}}) - 2 \epsilon
\big) \, \knorm{D \dq u \, \eta}_{L^2(D)}^2 
	 - c(\kappa,\epsilon,\knorm{D\eta}_{L^{\infty}(D)}) \, \knorm{D_k
u}_{L^2(D)}^2  
\end{align*}
where in the last line we have used the fact that the norm of the finite difference
quotient of a compactly supported function is always bounded by the norm of the
partial derivative (provided that the stepsize is sufficiently small). This
lower bound will be crucial (and can be understood as some ellipticity of the
vector field $A$ up to lower order terms). We next observe with the
Sobolev-Poincar\'{e} embedding (applied on every time-slice)
\begin{align*}
\lefteqn{\hspace{-0.75cm} \babs{ \sp{D (\dq u \, \eta^2)}{{\cal B} (h)
}_{L^2(D)} }}\\
	& \leq  L \, \big(\knorm{D \dq u \, \eta}_{L^2(D)} + 2 \knorm{\dq u \,
D\eta}_{L^2(D)} \big)
	\, \knorm{\dq u \, \eta}_{L^{\frac{2n}{n-2}}(D)}^{\theta} \\
	& \qquad {} \times \knorm{\dq u \,	
	\eta}_{L^2(D)}^{1-\theta} \, \big(
\knorm{Du}_{L^{\frac{2n}{(n+2)\theta}}(D)}^{\frac{2}{n+2}} 
	+ \knorm{u}_{L^{\frac{2}{\theta}}(D)}^{\frac{2}{n}} +
\knorm{f}_{L^{\frac{n}{\theta}}(D)} \big) \\
	& \leq  \big( \knorm{D \dq u \, \eta}_{L^2(D)}^{1+\theta} 
	+ \knorm{\dq u \, D\eta}_{L^2(D)}^{1+\theta} \big) 
	\\
	& \qquad {} \times c(n,D,L) \, \knorm{\dq u \, \eta}_{L^2(D)}^{1-\theta}
\, 
	\big( \knorm{Du}_{L^{\frac{2n}{(n+2)\theta}}(D)}^{\frac{2}{n+2}} 
	+ \knorm{u}_{L^{\frac{2}{\theta}}(D)}^{\frac{2}{n}} +
\knorm{f}_{L^{\frac{n}{\theta}}(D)} \big)
\intertext{for every $\theta \in (0,1)$; in the two-dimensional case $n=2$,
$\frac{2n}{n-2}$ shall be interpreted as any arbitrary number greater than $2$.
We note that we have omitted the step of passing from the shifted to the
original domain in the first inequality and we have applied $c d^{\theta} +
c^{\theta} d \leq c^{1+\theta} + d^{1+\theta} $ for all $c,d \geq 0$ to get from
the first to the second inequality. To estimate further we choose $\theta=
\frac{n}{n+2}$, according to the integrability assumptions of $u$ (using the
embedding given in~\eqref{embedding}), $Du$ and $f$. Thus, Young's inequality
gives}
\lefteqn{\hspace{-0.75cm} \babs{ \sp{D (\dq u \, \eta^2)}{{\cal B} (h)
}_{L^2(D)} }}\\
	& \leq  \epsilon \, \knorm{D \dq u \, \eta}_{L^2(D)}^2  
	+ c(n,D,L,\knorm{D\eta}_{L^{\infty}(D)},\epsilon) \, \big(\knorm{\dq u
\, \eta}_{L^2(D)}^{2} 
	+ 1 \big) \\
	& \qquad {} \times \big( \knorm{Du}_{L^2(D)}^{2} +
	\knorm{u}_{L^{\frac{2(n+2)}{n}}(D)}^{\frac{2(n+2)}{n}} +
\knorm{f}_{L^{n+2}(D)}^{n+2} \big) \,.
\end{align*}
Finally, using Young's inequality and standard properties of finite difference
quotients, we estimate the last term involving ${\cal C}(h)$ by
\begin{align*}
\lefteqn{\hspace{-0.5cm} \babs{ \sp{D (\dq u \, \eta^2)}{{\cal C} (h) }_{L^2(D)}
}} \\
	& \leq  L \, \big(\knorm{D \dq u \, \eta}_{L^2(D)} + 2 \knorm{\dq u \,
D\eta}_{L^2(D)} \big) 
	\big( \knorm{Du}_{L^2(D)} 
	+ \knorm{u}_{L^{\frac{2(n+2)}{n}}(D)}^{\frac{n+2}{n}} +
\knorm{f}_{L^{4}(D)}^2 \big) \\
	& \leq  \epsilon \, \knorm{D \dq u \, \eta}_{L^2(D)}^2  + 
	c(D,L,\knorm{D\eta}_{L^{\infty}(D)},\epsilon) \, \big( 1+
\knorm{Du}_{L^2(D)}^2 
	+ \knorm{u}_{L^{\frac{2(n+2)}{n}}(D)}^{\frac{2(n+2)}{n}} +
\knorm{f}_{L^{n+2}(D)}^{n+2} \big) \,.
\end{align*}
Now we have estimated all terms coming from the integral involving $\dq
A(x,t,u,Du)$. Next we study the last integral in equation~\eqref{higher1}. Employing the
properties~\eqref{GV-H}, we find
\begin{equation*}
\knorm{\dq H(s,Du(s)) \, \eta}_{L^2(D)} 
	\leq  \knorm{f_H^{\frac{a}{a-2}}(s) +  \kabs{Du(s)} \, \eta}_{L^2(D)} + L_H \knorm{D \dq u(s) \, \eta}_{L^2(D)}\,.
\end{equation*}
Before summarizing the previous estimates for the single terms, we introduce,
for ease of notation, the function
\begin{equation}
\label{higher2}
 G'(u,f)(s) :=  1 + \knorm{Du(s)}_{L^2(D)}^2 
	+ \knorm{u(s)}_{L^{\frac{2(n+2)}{n}}(D)}^{\frac{2(n+2)}{n}} 
	+ \knorm{f(s)}_{L^{a}(D)}^{a}
\end{equation}
with  $s \in (0,T)$, which belongs to $L^1(0,T)$ almost surely. Note that by definition we have $G'(u,f) \geq G_0(u,f)$ with $G_0(u,f)$ denoting the function introduced in Lemma~\ref{lemma_apriori}. Combining the latter estimates with the decomposition given in~\eqref{def-ABC}, using standard properties for finite difference quotients and choosing $\epsilon=\epsilon(\kappa,\nu,L_H)$ sufficiently small, we hence infer from~\eqref{higher1} that for every $\omega \in \Omega'$ there holds 
\begin{align}
\label{higher3}
\lefteqn{ \hspace{-0.5cm} \knorm{u_k'(t)}^2_{L^2(D)} + c^{-1}(L_H,\kappa,\nu)  \int_0^t \knorm{D \dq u(s)
\, \eta}_{L^2(D)}^2 \ds } \nonumber
	\\
	& \leq  \knorm{\dq u (0)\, \eta}^2_{L^2(D)} + c' \int_0^t 
	\big(\knorm{\dq u(s) \, \eta}_{L^2(D)}^{2} 
	+ 1 \big) \, G'(u,f)(s) \ds \nonumber \\
	& \quad {} + 2 \int_0^t \sp{ u_k'(s) \, \eta}{ \dq H(\cdot,s,Du(s)) \,
dB_s}_{L^2(D)} 
	+ 2 \int_0^t \knorm{f_H^{\frac{a}{a-2}}(s)}_{L^2(D)}^2 \ds \,, 
\end{align}
and the constant $c'$ depends only on $n,D,L,L_H,d',\kappa$ and $\nu$. Here we assume $c' \geq c_0$ with $c_0$ denoting the constant given in Lemma~\ref{lemma_apriori}. 
This is the preliminary pathwise estimate on the finite difference quotients (which however involves the stochastic integral) and concludes the Step~1. 

\textbf{Step~2.} Before passing to the expectation value as described in
the beginning we still need the Gronwall-type argument, similarly as in the proof of Lemma~\ref{lemma_apriori}. However, we here observe that the second integral on the right-hand side is in general not known to be finite, but the first factor of the integrand ``almost'' happens to appear in the sum of its left-hand side (in the sense that $u_k'$ differs from $\dq u \, \eta $ only on a negligible set). Hence, to get rid of this possibly uncontrollable term we apply the one-dimensional It\^o-formula (recalling $a > n+2$), and we obtain
\begin{align}
\label{higher4}
\lefteqn{ \hspace{-0.5cm} e^{-\int_0^t c' G'(u,f)(\tilde{s}) \, d\tilde{s}} \,
\knorm{u_k'(t)}^2_{L^2(D)}  + c^{-1} \int_0^t e^{-\int_0^s c' G'(u,f)(\tilde{s}) \, d\tilde{s} } \knorm{D \dq u(s) \,
\eta}_{L^2(D)}^2 \ds} \nonumber \\
	& \leq c' \int_0^t e^{-\int_0^s c' G'(u,f)(\tilde{s}) \, d\tilde{s}} 
	G'(u,f) \, \big( \knorm{\dq u(s) \, \eta}_{L^2(D)}^{2}  - \knorm{u_k'(s)}^2_{L^2(D)}\big) \ds \nonumber \\
	& \quad {} + \knorm{\dq u (0)\, \eta}^2_{L^2(D)} + 1 + 
	2 \int_0^t e^{-\int_0^s c' G'(u,f)(\tilde{s}) \, d\tilde{s} } \, 
	\sp{ u_k'(s) \, \eta}{ \dq H(\cdot,s,Du(s)) \, dB_s}_{L^2(D)} \nonumber
\\
	& \quad {}+ 2 \int_0^t e^{-\int_0^s c' G'(u,f)(\tilde{s}) \, d\tilde{s}
} 
	\, \knorm{f_H^{\frac{a}{a-2}}(s)}_{L^2(D)}^2 \ds \,.
\end{align}
Here, we again note that the average of the first integral on the right-hand side vanishes, due to the fact that $u_k'$ and $\dq u \, \eta $ coincide on $[0,T] \times \Omega$ except for a set of $\Lm^1 \times P$-measure zero. 

\textbf{Step~3.} We now derive the estimate for the average with weights. In contrast to Lemma~\ref{lemma_apriori}, we now derive in a first step an estimate for the (weighted) average of the $L^2(L^2)$-norm of $D \dq u$ (which proceeds in exactly the same way as before). Then, we use this estimate to get also an upper bound for the (weighted) average of the $L^{\infty}(L^2)$-norm of $\dq u$.  

We first note that identity~\eqref{higher1} implies that the process $\knorm{u_{k}'
(t) }_{L^{2}(D)}^{2}$ has a continuous version in $t$. Now, for every $R>0$, we introduce the random time
\[
\tau_{R}:=\inf \Big\{  t\in [0,T] \colon \int_{0}^{t} \knorm{u_{k}'(s) }_{L^{2}(D)}^{2} \, \knorm{ \triangle_{k,h} H(s,Du(s)) \eta }_{L^{2}(D)}^{2} \ds > R \Big\}
\]
with $\tau_{R}=T$ when the set is empty. Notice that
\[
\int_{0}^{T}\left\Vert u_{k}^{\prime}(s)\right\Vert_{L^{2}(D)
}^{2}\left\Vert \triangle_{k,h}H(s,Du(s))
\eta\right\Vert _{L^{2}(D)}^{2} \ds < \infty
\]
with probability one, because of the property $u\in V_{0}^{2}(D_{T}%
,\mathbb{R}^{N})$ and the assumption~\eqref{GV-H}$_2$ on $H$. 
Hence, when $R \rightarrow \infty$, $\tau_{R}$ is eventually equal to $T$, 
with probability one. In particular, $P(\lim_{R\rightarrow\infty}\tau_{R}=T)=1$ and for every $t \in [0,T]$ we have
\[
P \Big(  \lim_{R\rightarrow\infty}\Vert u_{k}^{\prime}(t\wedge\tau_{R}%
)\Vert_{L^{2}(D)}^{2}=\Vert u_{k}^{\prime}(t)\Vert_{L^{2}(D)}^{2} \Big) = 1 \,.
\] 

\textbf{Step~3a.} We compute inequality~\eqref{higher4} at time $t\wedge\tau_{R}$ and get
\begin{align*}
&  \hspace{-0.5cm} e^{-\int_{0}^{t\wedge\tau_{R}}c'G'(u,f)\,(\tilde
{s})  d\tilde{s}} \knorm{ u_{k}'(t\wedge\tau_{R}
)}_{L^{2}(D)}^{2} + c^{-1} \int_{0}^{t\wedge\tau_{R}}e^{-\int_{0}%
^{s}c'G'(u,f)\,(\tilde{s})  d\tilde{s}}\Vert D \triangle
_{k,h} u(s)  \,\eta\Vert_{L^{2}(D)}^{2} \ds\\
&  \leq c' \int_0^{t\wedge\tau_{R}} e^{-\int_0^s c' G'(u,f)(\tilde{s}) \, d\tilde{s}} 
	G'(u,f) \, \big( \knorm{\dq u(s) \, \eta}_{L^2(D)}^{2}  - \knorm{u_k'(s)}^2_{L^2(D)}\big) \ds \\
  & \quad {} + \Vert\triangle_{k,h}u_0\,\eta\Vert_{L^{2}(D)}^{2}+1+2\int
_{0}^{t\wedge\tau_{R}}e^{-\int_{0}^{s}c'G'(u,f)\,(  \tilde{s})
\,d\tilde{s}}\, \sp{ u_{k}^{\prime}(s) \eta}{\triangle_{k,h}H(
s,Du(s)) \, dB_{s} }_{L^{2}(D)}\\
& \quad {}+ 2 \int_{0}^{t\wedge\tau_{R}}e^{-\int_{0}^{s}c'G'(u,f)\,(  \tilde
{s})  \,d\tilde{s}}\,\Vert f_{H}^{\frac{a}{a-2}}(s)
\Vert_{L^{2}(D)}^{2} \ds\,.
\end{align*}
Now we have
\begin{multline*}
\quad \int_{0}^{t\wedge\tau_{R}}e^{-\int_{0}^{s}c'G'(u,f)\,(  \tilde
{s}) \,d\tilde{s}} \, \sp{u_{k}^{\prime}(s)\eta}{\triangle_{k,h} H(s,Du(s)) \, dB_{s}}_{L^{2}(D)} \\
=\int_{0}^{t}e^{-\int_{0}^{s} c' G'(u,f)\,(\tilde{s})
\,d\tilde{s}} \, 1_{s\leq\tau_{R}} \sp{u_{k}^{\prime}(s)\eta}
{\triangle_{k,h} H(s,Du(s)) \, dB_{s}}
_{L^{2}(D)} \quad
\end{multline*}
and
\begin{multline*}
\quad \int_{0}^{t} e^{-2 \int_{0}^{s}c'G'(u,f)\,(\tilde{s})
\,d\tilde{s}} 1_{s\leq\tau_{R}} \knorm{ u_{k}^{\prime}(s)}_{L^{2}(D)}^{2}
\knorm{ \triangle_{k,h} H(s,Du(s)) \eta }_{L^{2}(D)}^{2} \ds\\
 \leq \int_{0}^{t\wedge\tau_{R}} \knorm{ u_{k}^{\prime}(s) }_{L^{2}(D)}^{2}
 \knorm{ \triangle_{k,h} H(s,Du(s)) \eta }_{L^{2}(D)}^{2} \ds \leq R \quad
\end{multline*}
by definition of $\tau_{R}$. Thus, the stopped stochastic integral above is a
martingale, hence with expected value zero. This implies (using that $u_k'$ equals $\dq u \, \eta $ outside a set of $\Lm^1 \times P$-measure zero)
\begin{multline*}
\quad E \Big[  e^{-\int_{0}^{t\wedge\tau_{R}}c'G'(u,f)\,(
\tilde{s})  d\tilde{s}} \knorm{u_{k}'(t\wedge\tau_{R})}_{L^{2}(D)}^{2} \Big]  +c^{-1} E\Big[  \int_{0}^{t \wedge \tau_{R}}e^{-\int_{0}^{s}c'G'(u,f)\,(\tilde{s})  d\tilde{s}
} \knorm{ D \dq u(s)  \,\eta }_{L^{2}(D)}^{2} \ds \Big]
\\
\leq \knorm{ \triangle_{k,h}u_0 \,\eta }_{L^{2}(D)}^{2}+1+2 \, E\Big[
\,\int_{0}^{T} \knorm{ f_{H}^{\frac{a}{a-2}}(s) }_{L^{2}(D)}^{2} \ds\, \Big] \, . \quad
\end{multline*}
We apply Fatou's lemma to the first term and monotone convergence theorem to the
second one on the left-hand-side, and we get
\begin{multline*}
\quad E\Big[  e^{-\int_{0}^{t}c'G'(u,f)\,(\tilde{s}) d\tilde{s}} \knorm{u_{k}'(t) }_{L^{2}(D)}^{2} \Big] + c^{-1} E \Big[  \int_{0}^{t}e^{-\int_{0}^{s}c'
G'(u,f)\,(\tilde{s})  d\tilde{s}} \Vert D \dq u(s)  \,\eta\Vert_{L^{2}(D)}^{2} \ds \Big]  \\
\leq \knorm{ \triangle_{k,h}u_0\,\eta }_{L^{2}(D)}^{2}+1+ 2 \, E \Big[
\int_{0}^{T} \knorm{ f_{H}^{\frac{a}{a-2}}(s) }_{L^{2}(D)}^{2} \ds \, \Big] \quad
\end{multline*}
for every $t \in [0,T]$. This proves one of the two bounds claimed
by the Lemma. 

\textbf{Step~3b.} It is almost our final estimate except that we need the supremum
in time inside the first expected value, and thus we have to repeat the
previous computations by means of martingale inequalities. The previous
estimate (as well as the a~priori estimate from Lemma~\ref{lemma_apriori}) 
will be used in the next one; we found it convenient to proceed in
two steps. From the stopped inequality above we have
\begin{align*}
&  \hspace{-0.5cm} E \Big[  \sup_{t\in [0,T]  }e^{-\int
_{0}^{t\wedge\tau_{R}}c'G'(u,f)\,(\tilde{s})  d\tilde{s}
} \knorm{ u_{k}^{\prime}(t\wedge\tau_{R}) }_{L^{2}(D)}^{2}
\Big]  \\
& \leq \Vert\triangle_{k,h}u_0\,\eta\Vert_{L^{2}(D)}^{2}+1+2 \, E \Big[ \int_{0}
^{T} \knorm{ f_{H}^{\frac{a}{a-2}}(s) }_{L^{2}(D)}
^{2} \ds \Big]  \,.\\
& \quad {} +2 \, E \Big[  \sup_{t\in [0,T] } \Babs{ \int_{0}
^{t}e^{-\int_{0}^{s}c'G'(u,f)\,(\tilde{s})  \,d\tilde{s}
}\,1_{s\leq\tau_{R}}\left\langle u_{k}^{\prime}(s)\eta,\,\triangle
_{k,h}H(s,Du(s))  dB_{s}\right\rangle _{L^{2}
(D)} } \, \Big]  .
\end{align*}
We apply again Fatou's lemma to the expected value on the left-hand-side. The
last term on the right-hand-side is estimated, by means of the
Burkholder-Davis-Gundy inequality, by
\begin{align*}
& \hspace{-0.5cm} C \, E\Big[  \Big(  \int_{0}^{T}e^{-2\int_{0}^{s}
c'G'(u,f)\,(\tilde{s})  \,d\tilde{s}}\,1_{s\leq\tau_{R}
}\left\Vert u_{k}^{\prime}(s)\right\Vert _{L^{2}(D)}^{2}\left\Vert
\triangle_{k,h}H(s,Du(s))  \eta\right\Vert
_{L^{2}(D)}^{2} \ds\Big)^{1/2} \Big]  \\
& =C \, E \Big[  \Big(  \int_{0}^{T}e^{-2\int_{0}
^{s\wedge\tau_{R}}c'G'(u,f)\,(\tilde{s})  \,d\tilde{s}
}\,1_{s\leq\tau_{R}}\left\Vert u_{k}^{\prime}(s\wedge\tau_{R})\right\Vert
_{L^{2}(D)}^{2}\left\Vert \triangle_{k,h}H(s,Du(s)) \eta\right\Vert _{L^{2}(D)}^{2} \ds \Big)  ^{1/2} \Big]  \\
& \leq C \, E \Big[  I_{1}^{1/2}I_{2}^{1/2}\Big]  \leq
\frac{1}{2} E \big[  I_{1}\big]  +\frac{C^{2}}{2} 
E \big[  I_{2} \big]
\end{align*}
where
\begin{align*}
I_{1}  & =\sup_{t \in [0,T]  }e^{-\int_{0}^{t\wedge\tau_{R}}
c'G'(u,f)\,(\tilde{s}) \,d\tilde{s}} \knorm{
u_{k}'(t\wedge\tau_{R}) }_{L^{2}(D)}^{2} \,,  \\
I_{2}  & =\int_{0}^{T}e^{-\int_{0}^{s}c'G'(u,f)\,(\tilde{s})
\,d\tilde{s}}\knorm{ \triangle_{k,h}H(s,Du(s))  \eta }_{L^{2}(D)}^{2} \ds.
\end{align*}
Hence, we have proved
\[
\frac{1}{2}E\big[  I_{1} \big]  \leq \Vert\triangle_{k,h}u_0\,\eta
\Vert_{L^{2}(D)}^{2}+1+2 \, E \Big[  \int_{0}^{T} \knorm{ f_{H}^{\frac{a}{a-2}
}(s) }_{L^{2}(D)}^{2} \ds \Big]  +\frac{C^{2}}{2} 
E\big[  I_{2}\big]  .
\]
From the estimate above for $\knorm{ \triangle_{k,h} H(s,Du(s)) \eta}_{L^{2}(D)}^{2}$ 
and the estimate of Step~3a, we know that $E[I_{2}]$ is bounded from
above via 
\begin{align*}
E[I_{2}] & \leq 4 \, E \Big[ \int_{0}^{T} e^{-\int_{0}^{s}c'G'(u,f)\,(\tilde{s})
\,d\tilde{s}} \knorm{ Du(s) \, \eta}_{L^2(D)}^2 \ds \Big] \\
  & \quad {} + c \, \Big(  \knorm{ \triangle_{k,h}u_0\,\eta }_{L^{2}(D)}^{2}+1+ E \Big[
\int_{0}^{T} \knorm{ f_{H}^{\frac{a}{a-2}}(s) }_{L^{2}(D)}^{2} \ds \, \Big] \Big) \,.
\end{align*}
With a suitable choice of $D_0$ (in dependency of $D$ and $D'$) and keeping in mind $c' G'(u,f) \geq c_0 G_0(u,f)$ by construction, the first average on the right-hand side of the last inequality is bounded due to Lemma~\ref{lemma_apriori}. Hence we get the bound for $E[I_{1}]$ as asserted in the statement of the lemma. Since $h$ was arbitrary and $\eta=1$ on $D^{\prime}$, the proof is complete.
\end{proof}
\end{lemma}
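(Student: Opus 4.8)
The plan is to run the three-step scheme of Lemma~\ref{lemma_apriori}, but now applied to the finite difference quotients $\dq u$ instead of to $u$ itself. First I would observe that, since $u$ is a weak solution of~\eqref{equation} in the sense of Definition~\ref{def_weak_solution_ito}, the shifted function solves the same system with coefficients $A(x+he_k,\cdot)$ and noise coefficient $H(x+he_k,\cdot)$; subtracting and dividing by $h$ shows that $\dq u$ satisfies a weak equation driven by $\dq A(\cdot,s,u,Du)$ in the drift and $\dq H(\cdot,s,Du)$ in the stochastic term. Multiplying by a cut-off $\eta\in C^\infty_c(D,[0,1])$ with $\eta\equiv 1$ on $D'$ and $\kabs{D\eta}\le c(d')$, and applying the It\^o formula in the Gelfand triple $W^{1,2}_0(D,\R^N)\subset L^2(D,\R^N)\subset W^{-1,2}(D,\R^N)$ (Theorem~\ref{Ito-II}) to the squared $L^2$-norm of $\eta\,\dq u$, yields a pathwise identity in the spirit of~\eqref{apriori1}, together with a continuous-in-$t$ representative $u_k'$ of $\eta\,\dq u$.

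The substance of \textbf{Step~1} is then to bound from below the drift term $\langle D(\dq u\,\eta^2),\dq A\rangle_{L^2(D)}$. I would decompose $\dq A$ by a telescoping argument into three pieces, according to whether the increment acts on the gradient slot, on the $u$-slot, or on the explicit $x$-dependence, writing each as an integral of $D_zA$, $D_uA$, $D_xA$ tested against $\dq Du$, $\dq u$, $1$ respectively. The $D_z$-piece, via the near-identity assumption~\eqref{GV-A}$_2$, produces the coercive term $\kappa^{-1}(1-(1-\nu^2)^{1/2})\,\knorm{D\dq u\,\eta}_{L^2(D)}^2$ up to an $\epsilon$-error and a lower-order contribution controlled by $\knorm{D_ku}_{L^2(D)}^2$ (using that a difference quotient of a compactly supported function is bounded by the corresponding derivative). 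The $D_u$-piece is the delicate one: using~\eqref{GV-A}$_3$, H\"older and the Sobolev--Poincar\'e inequality on time slices with the interpolation exponent $\theta=\tfrac{n}{n+2}$ — chosen to match the integrability of $u$, $Du$ and $f$ furnished by the parabolic embedding~\eqref{embedding} — followed by Young's inequality, it is dominated by $\epsilon\,\knorm{D\dq u\,\eta}_{L^2(D)}^2$ plus $c\,(\knorm{\dq u\,\eta}_{L^2(D)}^2+1)\,G'(u,f)$. The $D_x$-piece and the It\^o correction $\knorm{\dq H\,\eta}_{L^2(D)}^2$ are handled via~\eqref{GV-A}$_4$ and~\eqref{GV-H}; the latter produces the term $L_H\knorm{D\dq u\,\eta}_{L^2(D)}$, and this is exactly where the hypothesis $L_H^2<2\kappa^{-1}(1-(1-\nu^2)^{1/2})$ enters: choosing $\epsilon$ small one absorbs all $\knorm{D\dq u\,\eta}^2$ contributions into the coercive term, arriving at a pathwise estimate whose right-hand side consists of the data term $\knorm{D_ku_0}_{L^2(D)}^2+\int_0^T\knorm{f_H^{a/(a-2)}}_{L^2(D)}^2$, a residual drift $\int_0^t(\knorm{\dq u\,\eta}_{L^2(D)}^2+1)\,G'(u,f)\,ds$, and a stochastic integral.

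\textbf{Step~2} removes the residual drift by a stochastic Gronwall device: since that drift almost coincides with the good left-hand side ($u_k'$ equals $\eta\,\dq u$ for $\Lm^1\times P$-a.e.\ $(t,\omega)$), I apply the one-dimensional It\^o formula (Theorem~\ref{Ito-I}) to $e^{-\int_0^t c'G'(u,f)\,ds}\,(1+\knorm{u_k'(t)}_{L^2(D)}^2)$, and the generated terms cancel the bad drift up to a quantity of vanishing average. \textbf{Step~3} passes to the expectation. As the mean of the stochastic integral is not a priori zero, I introduce stopping times $\tau_R$ keeping $\int_0^{t\wedge\tau_R}\knorm{u_k'}_{L^2(D)}^2\,\knorm{\dq H\,\eta}_{L^2(D)}^2\,ds\le R$, so the stopped stochastic integral is a genuine martingale; Fatou (for the boundary term) and monotone convergence (for the gradient term) as $R\to\infty$ give the bound for $E\big[\int_0^T e^{-\int_0^t c'G'}\knorm{D\dq u(t)}_{L^2(D')}^2\,dt\big]$ (Step~3a). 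To put the $\sup_t$ inside the expectation (Step~3b) I repeat the stopped estimate, apply Burkholder--Davis--Gundy to the martingale term, split the resulting product by Young so that half of $E[\sup_t e^{-\int c'G'}\knorm{u_k'(t\wedge\tau_R)}_{L^2(D)}^2]$ is absorbed on the left, and bound the remaining factor using the $L^2(L^2)$-estimate just obtained \emph{together with} the a priori estimate of Lemma~\ref{lemma_apriori} applied on an intermediate domain $D_0$ with $D'\Subset D_0\Subset D$ (admissible since $c'G'\ge c_0G_0$ by construction). Letting $R\to\infty$, and noting that $h$ was arbitrary and $\eta\equiv1$ on $D'$, finishes the proof. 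I expect \textbf{Step~1} to be the main obstacle — organizing the telescoping decomposition of $\dq A$ and calibrating the exponents so every lower-order term lands inside the single integrable weight $G'(u,f)$ while the $\knorm{D\dq u\,\eta}^2$ coefficients stay small enough to absorb (the interplay of the ellipticity defect $1-(1-\nu^2)^{1/2}$ with $L_H$ is tight, and the cross terms from $D\eta$ require care). The genuinely stochastic subtlety is the self-referential absorption in Step~3b, which is only legitimate because Step~3a and Lemma~\ref{lemma_apriori} are already in hand; the ordering of those sub-steps is essential.
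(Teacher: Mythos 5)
Your proposal is correct and follows essentially the same route as the paper's proof: the same telescoping decomposition of $\dq A$ into $D_z$-, $D_u$-, $D_x$-pieces with coercivity from~\eqref{GV-A}$_2$ and Sobolev--Poincar\'e interpolation at $\theta=\tfrac{n}{n+2}$, the same exponential-weight Gronwall device via the one-dimensional It\^o formula, and the same stopping-time/Fatou/monotone-convergence argument followed by Burkholder--Davis--Gundy with the Young split and the absorption into $E[I_1]$, using Lemma~\ref{lemma_apriori} on an intermediate domain together with $c'G'\ge c_0G_0$. No gaps of substance; the ordering of Step~3a before Step~3b, which you flag as essential, is exactly how the paper proceeds.
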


\begin{remark}
For SPDEs having first space derivatives of the solution in the
coefficient of the noise, the most general condition for existence of
solutions in $L^2$, which becomes also a condition for an improvement of 
$W^{k,2}$-regularity, is more precise than just the control on the 
Lipschitz constant of $H$ expressed by the statement of Lemma~\ref{Diff-quotients}; 
see~\cite{PARDOUX75, KRYROZ79}. However, when we go to $W^{k,p}$-regularity with 
$p>2$, the computations are too involved and the algebraic simplicity of the 
condition of~\cite{PARDOUX75, KRYROZ79} seems to be lost. 
For this reason we have simplified the estimate also for $p=2$.
\end{remark}

Applying Theorem~\ref{thm-derivatives} with $(p,q) = (2,2)$ and 
$(p,q) = (2,\infty)$ and summing over $k \in \{1,\ldots,n\}$ we then infer 
from the previous lemma that second order spatial derivatives of $u$ exist
almost surely. We should 
note that this result does not extend up to the boundary of $D$ since the 
constant $c'$ blows up for $\dist(D',D) \searrow 0$, but the result holds on
any fixed subset $D' \Subset D$.

\begin{corollary}
\label{cor-second-der}
Let $u \in V^2(D_T,\R^N)$ be a weak solution under the assumptions of the
Lemma~\ref{Diff-quotients}. Then there holds $Du \in V^2(D'_T,\R^N)$
with probability one, and
\begin{multline*}
\quad \E \, \Big[ \, \sup_{t \in (0,T)} e^{-\int_0^t c' G'(u,f) \ds } \knorm{D
u}^2_{L^2(D')} + \int_0^T e^{-\int_0^t c' G'(u,f) \ds }
\knorm{D^2u}_{L^2(D')}^2 \dt \Big] \\
	 \leq c' \Big( \knorm{D u_0}_{L^2(D)}^2 + 1 
	+ \E \big[ \knorm{f_H^{\frac{a}{a-2}}}_{L^2(D_T)}^2 \big] \Big)\quad
\end{multline*}
for the constant $c'$ from Lemma~\ref{Diff-quotients}. Moreover, we have for all $k \in \{1,\ldots,n\}$
\begin{equation*}
e^{- \frac{1}{2} \int_0^t c' G'(u,f) \ds } \dq Du \, \rightarrow \, e^{- \frac{1}{2}
\int_0^t c' G'(u,f) \ds } D_k Du \quad \text{weakly in } L^2(D_T' \times \Omega ) \,. 
\end{equation*}
\end{corollary}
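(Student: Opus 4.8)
The plan is to feed the uniform-in-$h$ bounds of Lemma~\ref{Diff-quotients} into the abstract difference-quotient criterion of Theorem~\ref{thm-derivatives}, taking as weight the process
\[
Y(t) := e^{-\frac12 \int_0^t c' G'(u,f)(\tilde s)\, d\tilde s} \, .
\]
First I would check that $Y$ is admissible for Theorem~\ref{thm-derivatives}: because $G'(u,f) \geq 1 > 0$ (see~\eqref{higher2}), the map $t \mapsto Y(t)$ is nonincreasing with values in $(0,1]$, and since $G'(u,f) \in L^1(0,T)$ with probability one we get $\inf_{t \in [0,T]} Y(t) = Y(T) > 0$ almost surely, i.\,e.\ $P(\inf_t Y > 0) = 1$. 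Note also $Y(t)^2 = e^{-\int_0^t c' G'(u,f)\,\ds}$, which is exactly the exponential factor appearing both in Lemma~\ref{Diff-quotients} and in the bound claimed in the corollary.

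Next, fix $D' \Subset D$ and $k \in \{1,\dots,n\}$ (if one insists on the literal hypothesis of Theorem~\ref{thm-derivatives} one should interpose a domain $D' \Subset D'' \Subset D$ and run Lemma~\ref{Diff-quotients} on $D''$; the constants there depend on $\dist(D',\partial D)$, which is why the conclusion stays local). Apply Theorem~\ref{thm-derivatives} with $(p,q) = (2,2)$ to the function $Du$, as indicated in Remark~\ref{rem-diff-quot}. Since finite differences commute with weak derivatives, $\dq(Du) = D(\dq u)$, so the $\int_0^T$-term in Lemma~\ref{Diff-quotients} is precisely
\[
\E\Big[ \int_0^T \knorm{Y(t)\,\dq(Du)(t)}_{L^2(D')}^2 \dt \Big] \;\leq\; c'\big( \knorm{Du_0}_{L^2(D)}^2 + 1 + \E[\knorm{f_H^{\frac{a}{a-2}}}_{L^2(D_T)}^2] \big)
\]
uniformly in $h$. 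Theorem~\ref{thm-derivatives} then yields $D_k Du \in L^2(D'_T)$ with probability one, the weak convergence $Y\,\dq Du \to Y\,D_k Du$ in $L^2(D'_T \times \Omega)$ --- which is the last assertion of the corollary --- and $\E[\knorm{Y\,D_k Du}_{L^2(D'_T)}^2] \leq c'(\cdots)$ with the same right-hand side. In parallel I would apply Theorem~\ref{thm-derivatives} with $(p,q) = (2,\infty)$ to $u$ itself; here the $\sup_t$-term of Lemma~\ref{Diff-quotients} reads $\E[\knorm{Y\,\dq u}_{L^\infty(0,T;L^2(D'))}^2] \leq c'(\cdots)$, whence $D_k u \in L^\infty(0,T;L^2(D'))$ with probability one together with $\E[\knorm{Y\,D_k u}_{L^\infty(0,T;L^2(D'))}^2] \leq c'(\cdots)$.

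Combining the two conclusions for each $k$ (as in Remark~\ref{rem-diff-quot}) gives $D_k u \in L^\infty(0,T;L^2(D')) \cap L^2(0,T;W^{1,2}(D')) = V^2(D'_T,\R^N)$; summing over $k \in \{1,\dots,n\}$ yields $Du \in V^2(D'_T,\R^N)$ with probability one. For the displayed estimate I would use $\kabs{Du}^2 = \sum_k \kabs{D_k u}^2$ and $\kabs{D^2 u}^2 = \sum_k \kabs{D_k Du}^2$ together with the subadditivity of the supremum (resp.\ additivity of the integral in time), which lets the $n$ bounds obtained above add up to
\[
\E\Big[ \sup_{t \in (0,T)} Y(t)^2 \knorm{Du}_{L^2(D')}^2 + \int_0^T Y(t)^2 \knorm{D^2 u}_{L^2(D')}^2 \dt \Big] \;\leq\; c'\big( \knorm{Du_0}_{L^2(D)}^2 + 1 + \E[\knorm{f_H^{\frac{a}{a-2}}}_{L^2(D_T)}^2] \big)
\]
after relabelling $c'$ so as to absorb the factor $n$; since $Y(t)^2 = e^{-\int_0^t c' G'(u,f)\,\ds}$ this is exactly the claimed bound.

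The whole argument is essentially bookkeeping once Lemma~\ref{Diff-quotients} and Theorem~\ref{thm-derivatives} are in place; the main points that deserve care --- rather than a genuine obstacle --- are the admissibility of the weight $Y$ (this is where the pathwise $L^1$-integrability of $G'(u,f)$ is used), the precise function-space identifications $L^2(\Omega;L^2(0,T;L^2(D'))) = L^2(D'_T \times \Omega)$ and $L^2(\Omega;L^\infty(0,T;L^2(D')))$ versus the supremum-in-time expression, so that the hypotheses of Theorem~\ref{thm-derivatives} are met verbatim, and the already-noted fact that one works on a fixed $D' \Subset D$ because $c'$ degenerates as $\dist(D',\partial D) \searrow 0$.
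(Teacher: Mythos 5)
Your proposal is correct and follows essentially the same route as the paper: the paper's (one-sentence) proof is precisely to apply Theorem~\ref{thm-derivatives} with $(p,q)=(2,2)$ and $(p,q)=(2,\infty)$, with the weight $e^{-\frac12\int_0^t c'G'(u,f)\ds}$, to the uniform-in-$h$ bounds of Lemma~\ref{Diff-quotients} and sum over $k$, exactly as you do (including the localization remark about $c'$ blowing up as $\dist(D',\partial D)\searrow 0$). Your additional bookkeeping (admissibility of the weight via $G'(u,f)\in L^1(0,T)$ a.s., commuting $\dq$ with $D$, lower semicontinuity giving the estimate, absorbing the factor $n$ into $c'$) fills in the details the paper leaves implicit and is sound.
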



\subsection{Iteration}

In the next step we want to iterate the procedure from the previous section, in
a way such that we do not only know the spacial gradient $Du$ to belong to the
space $V^2$ with probability one, but that we get this result also for certain
powers of $Du$. For convenience we introduce the function 
\begin{equation*}
W_q \colon \R^k \to \R \quad \text{defined by} \quad W_q(\xi)  := \kabs{\xi}^q
\end{equation*}
for every $q \geq 0$.
We start by briefly describing the strategy how this regularity improvement
is achieved. First we observe from the results in Section~\ref{section_second_derivatives} that there exists a subset of $\Omega$ of full measure on which $Du$ belongs to $V^2_{\rm{loc}}(D_T)$, hence we can now take advantage of higher integrability properties for $u$ and $Du$. This shall be done with the following (but formal) iteration scheme:
\begin{align*}
W_{q_j}(Du) \in V^2 \quad &\longrightarrow \quad Du \in L^{2q_j \frac{n+2}{n}}
\text{ and }
u \in L^{2q_j (\frac{n+2}{n})^2} \cap L^{\infty}(L^{2q_j \frac{n}{n-2}})\\
	&\longrightarrow  
	\quad D_uA(x,t,u,Du) \in L^{\min\{q_j \frac{(n+2)^2}{n},a\}}  \text{ and
}
	D_x A(x,t,u,Du) \in L^{\min\{2q_j \frac{n+2}{n},\frac{a}{2}\}}
\\[0.15cm]
	&\longrightarrow  
	\quad W_{q_{j+1}}(Du) \in V^2
\end{align*}
for a sequence $\{q_j\}_{j \in \N}$ of numbers $q_j \geq 1$ for all $j \in \N$.
The first implication indeed follows from the Sobolev's embedding for the space
$V^{2,p}$, the second one from the growth conditions on the vector field $A$, the
third one from the iteration (and a convergence result concerning finite
difference quotients). After a finite number of steps we then arrive at a final
(maximal) higher integrability exponent, which essentially reflects how close
the vector field $A$ is to the Laplace system. This should be understood in the
following sense: the closer $\nu$ is to one (note that $\nu = 1$ corresponds to
the case $A(x,t,u,z) = z$ plus potential lower order terms), the more
integrability for $Du$ can be gained in the iteration and the better will be the
final regularity properties of $u$. Finally, we note that in every step of the
iteration we will have to reduce the radius of the parabolic cylinder and we
will also have to restrict ourselves to smaller subsets of $\Omega$.
Nevertheless, the higher integrability results will always be true on sets of
probability one.

We now start with some preliminary remarks and consider again the equation
\eqref{equation}
\begin{equation*}
du \, = \, \diverg A (x,t,u,Du) \dt + H(x,t,Du) \, dB_t 
\end{equation*}
in $D_T$. We observe that $\diverg A(x,t,u,Du)$ is well defined in view
of the regularity assumptions~\eqref{GV-A} and the existence of second
order spatial derivatives, see Corollary~\ref{cor-second-der}. More precisely,
it is easy to check that for every weak solution $u \in V^2_0(D_T,\R^N)$  we have:
$\diverg A(x,t,u,Du) \in L^{2}_{\rm{loc}}(D',\R^N)$ with probability one, and the
equation above holds for $\Ln$-almost every $x \in D'$ for $\Lm^1 \times P
$-almost all $(t,\omega) \in (0,T) \times \Omega $. Hence, we can now work
immediately with this equation without passing to its weak formulation. 

In the next lemma we will provide the main step of the iteration argument:
\begin{lemma}
\label{diff-quotients-iteration}
Let $u \in V^2(D_T,\R^N)$ be a weak solution to the initial boundary value problem
to~\eqref{equation} under the assumptions~\eqref{GV-A},~\eqref{GV-H} and with
initial values $u(\ccdot,0) = u_0(\cdot) \in W^{1,2}(D,\R^N)$, and assume that
\begin{equation*}
E \Big[ \knorm{ Y_p^p \, W_{p }(Du) }_{L^{2  \frac{n+2}{n}}(D'_T)}^{\frac{2}{p}} \Big] \leq C_p < \infty 
\end{equation*}
for some $p \geq 1$, a set $D' \subset D$, and $Y_p \colon [0,T] \times \Omega  \to (0,1]$ given by $Y_p(t,\omega) = {\rm exp}(- \int_0^t G_p(s,\omega) ds)$ for some function $G_p$ which is in $L^1(0,T)$ with probability one. Let
$D'' \subset D'$ with $d := \dist(D'',\partial D') > 0$. Then for every number
$q \geq 1$ satisfying
\begin{equation}
\label{restrict-q}
q  \leq \min\Big\{ p \frac{n+2}{n}, 1+ p \frac{n+2}{n} \frac{a-4}{a}, \frac{a-2}{2}\Big\} \quad \text{and} \quad L_H^2 <
\frac{1}{\kappa (q - \frac{1}{2})} \Big( \Big[ 1 - \Big( \frac{q-1}{q}\Big)^2
\Big]^{\frac{1}{2}} - \Big[ 1 - \nu^2 \Big]^{\frac{1}{2}} \Big) \,,
\end{equation}
all initial values $u_0 \in W^{1,2q}(D,\R^N)$, and every $k \in \{1,\ldots,n\}$ there holds
\begin{multline*}
\quad \sup_{\kabs{h} < d} \E \, \Big[ \Big( \sup_{t \in (0,T)}  \knorm{Y^q_q \, W_q(\dq
u)}^{2}_{L^2(D'')} 
	+ \int_0^T \knorm{Y_q^q \,  DW_q(\dq u)}_{L^2(D'')}^2 \dt \Big)^{\frac{1}{q}} \Big] \\
	\leq  c \, \Big( \knorm{W_q(D_k u_0)}^{2}_{L^{2}(D)} + 1 + 
	\E \big[ \knorm{f_H(s) )}_{L^a(D_T)}^a  \big] \Big)^{\frac{1}{q}} \,, \quad
\end{multline*}
for $Y_q \colon [0,T] \times \Omega \to (0,1]$ given by $Y_q(t,\omega) = {\rm exp}(- \int_0^t G_q(s,\omega) ds)$ for some function $G_q$ which is in $L^1(0,T)$ with probability one, and a constant $c$ depending only on
$n,p,D,T,L,L_H,d,\kappa,\nu$, and $C_p$.
\end{lemma}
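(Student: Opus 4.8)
The plan is to run, for the difference-quotient system, the three-step scheme of Lemmas~\ref{lemma_apriori} and~\ref{Diff-quotients}, the new ingredient being that $\dq u$ is now tested against a \emph{truncated power} of itself, built from Lemma~\ref{tech-kalita-2}. Fix $k$, fix $h$ with $\kabs{h}<d$, pick a cut-off $\eta\in C^\infty(D',[0,1])$ with $\eta\equiv1$ on $D''$ and $\kabs{D\eta}\le c(d)$, fix a truncation level $K>0$, and set $v:=T_{q,K}'(\kabs{\dq u})\,\kabs{\dq u}^{-1}\,\dq u$. By Corollary~\ref{cor-second-der} and the remark following it, on a set of full probability the process $\dq u(\cdot,t)$ solves, in the Gelfand triple $W^{1,2}_0(D,\R^N)\subset L^2(D,\R^N)\subset W^{-1,2}(D,\R^N)$, the It\^o equation $d(\dq u)=\diverg\dq A(\cdot,s,u,Du)\,ds+\dq H(\cdot,s,Du)\,dB_s$ with $\diverg\dq A\in L^2_{\mathrm{loc}}$. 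Since $T_{q,K}$ is $C^2$ with $T_{q,K}''$ bounded by $c(q)K^{2q-2}$ (Lemma~\ref{tech-kalita-2}(ii)), the functional $\xi\mapsto\int_D\eta^2T_{q,K}(\kabs{\xi})\,dx$ is a genuine $C^2$ map on $L^2(D,\R^N)$ with bounded second derivative, and It\^o's formula applied to it --- in the spirit of Theorem~\ref{Ito-II}, whose squared-norm version is the template --- yields, pathwise for a.e.\ $\omega$, the identity
\begin{equation*}
\mathcal{E}_K(t)+2\int_0^t\sp{D(\eta^2v)}{\dq A(\cdot,s,u,Du)}_{L^2(D)}\,ds=\mathcal{E}_K(0)+2\int_0^t\sp{\eta^2v}{\dq H(\cdot,s,Du)\,dB_s}_{L^2(D)}+R_K(t)
\end{equation*}
for $\mathcal{E}_K(t):=\int_D\eta^2T_{q,K}(\kabs{\dq u(t)})\,dx$, with It\^o correction $0\le R_K(t)\le\tfrac12\int_0^t\int_D\eta^2T_{q,K}''(\kabs{\dq u})\,\kabs{\dq H}^2\,dx\,ds$. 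The truncation serves only to make this It\^o formula legitimate; all upper bounds below are taken uniform in $K$ via the growth estimates of Lemma~\ref{tech-kalita-2}(ii), and one lets $K\to\infty$ at the end by Fatou's lemma.

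\emph{Step~1 (pathwise estimate).} Decompose $\dq A=\mathcal{A}(h)+\mathcal{B}(h)+\mathcal{C}(h)$ as in~\eqref{def-ABC}. For the principal part write $\sp{Dv}{\mathcal{A}(h)}=\kappa^{-1}\sp{Dv}{D\dq u}-\kappa^{-1}\sp{Dv}{D\dq u-\kappa\mathcal{A}(h)}$ and combine the pointwise ellipticity~\eqref{GV-A}$_2$ with the two inequalities of Lemma~\ref{tech-kalita-2}(iii), $D\dq u\cdot Dv\ge\sqrt{\mu(q)}\,\kabs{D\dq u}\,\kabs{Dv}\ge\sqrt{\mu(q)}\,T_{q,K}'(\kabs{\dq u})\,\kabs{\dq u}^{-1}\,\kabs{D\dq u}^2$; this produces the good term $\kappa^{-1}\big(\sqrt{\mu(q)}-(1-\nu^2)^{1/2}\big)\int\eta^2T_{q,K}'(\kabs{\dq u})\,\kabs{\dq u}^{-1}\,\kabs{D\dq u}^2$, which in the limit $K\to\infty$ controls $c(q)^{-1}\int\eta^2\kabs{DW_q(\dq u)}^2$. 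In $R_K$ one uses $\kabs{\dq H}\le L_H\kabs{D\dq u}+c(L)\big(f_H^{a/(a-2)}+\kabs{Du}\big)$ from~\eqref{GV-H} together with $T_{q,K}''(t)\,t\le(2q-1)T_{q,K}'(t)$ (Lemma~\ref{tech-kalita-2}(ii)), so that the dangerous part of $R_K$ is bounded by $(q-\tfrac12)(1+\epsilon)L_H^2\int\eta^2T_{q,K}'(\kabs{\dq u})\,\kabs{\dq u}^{-1}\,\kabs{D\dq u}^2$ and is strictly dominated by the good term --- this being exactly the role of the smallness hypothesis $L_H^2<\kappa^{-1}(q-\tfrac12)^{-1}\big(\sqrt{\mu(q)}-(1-\nu^2)^{1/2}\big)$ in~\eqref{restrict-q}. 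The remaining contributions --- those carrying $D\eta$, those of $\mathcal{B}(h)$ and $\mathcal{C}(h)$ (which involve $D_uA$ and $D_xA$ with the growth~\eqref{GV-A}$_{3,4}$), and the rest of $R_K$ --- are absorbed via Young's and Sobolev--Poincar\'e inequalities: a small multiple into $\int\eta^2\kabs{DW_q(\dq u)}^2$, the rest into $\int_0^t\big(\mathcal{E}_K(s)+1\big)G_q(s)\,ds$ plus a remainder whose weighted expectation will be controlled. Here $G_q\in L^1(0,T)$ a.s.\ collects $\knorm{Du(s)}$, $\knorm{u(s)}$ and $\knorm{f(s)}$ in the exponents dictated by~\eqref{GV-A}, and it is precisely the three restrictions on $q$ in~\eqref{restrict-q} which --- via the embedding~\eqref{embedding} for $V^{2,p}$, upgrading the hypothesis $E[\knorm{Y_p^pW_p(Du)}_{L^{2(n+2)/n}(D'_T)}^{2/p}]\le C_p$ to weighted $L^{2p(n+2)/n}$-control of $Du$ and the associated control of $u$ --- make these exponents admissible; the constraint $q\le(a-2)/2$, in particular, is what lets the $f_H$-part of $\dq H$ be bounded in terms of $\knorm{f_H}_{L^a(D_T)}^a$. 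The outcome is, for a.e.\ $\omega$, a differential inequality
\begin{equation*}
\mathcal{E}_K(t)+c^{-1}\!\int_0^t\!\int_D\eta^2\kabs{DW_q(\dq u)}^2\,dx\,ds\le\mathcal{E}_K(0)+1+\int_0^t\!\big(\mathcal{E}_K(s)+1\big)G_q(s)\,ds+\mathcal{M}_K(t)+\mathcal{R}(t),
\end{equation*}
with $\mathcal{M}_K(t)=2\int_0^t\sp{\eta^2v}{\dq H\,dB_s}_{L^2(D)}$ and $\mathcal{R}$ a remainder whose expectation, after weighting, is bounded by $c\,(1+E[\knorm{f_H}_{L^a(D_T)}^a]+C_p)$.

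\emph{Steps~2--3 (Gronwall and expectation).} As in Lemma~\ref{lemma_apriori}, apply the one-dimensional It\^o formula to $e^{-\int_0^t\tilde cG_q(s)\,ds}(\mathcal{E}_K(t)+1)$ to remove $\int_0^t(\mathcal{E}_K(s)+1)G_q(s)\,ds$ from the right-hand side, choosing $G_q$ (enlarged if necessary to dominate $G_p$, so that $Y_q$ is at most a power of $Y_p$) so that $Y_q(t)^{2q}=e^{-\int_0^t\tilde cG_q}$; this gives a weighted pathwise inequality with $Y_q^{2q}\mathcal{E}_K(t)$ and $\int_0^tY_q^{2q}\int\eta^2\kabs{DW_q(\dq u)}^2$ on the left. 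Then introduce, for $R>0$, the stopping time $\tau_R$ built from the quadratic variation of the stopped stochastic term, exactly as in Lemma~\ref{Diff-quotients}, so that it is a true martingale of zero mean; take the supremum in $t$ and control the stochastic term by the Burkholder--Davis--Gundy inequality --- now with exponent $1/q\in(0,1]$ --- together with Young's inequality and, for the residual bracket, the a~priori estimate of Lemma~\ref{lemma_apriori} and the hypothesis, just as in Step~3b of Lemma~\ref{Diff-quotients}. Raising the pathwise bound to the power $1/q$ before taking $E$ is legitimate because $1/q\le1$ gives $(a+b)^{1/q}\le a^{1/q}+b^{1/q}$, and $E$ of the $f_H$-term is handled by Jensen's inequality and $Y_q\le1$, producing $c\,(1+E[\knorm{f_H}_{L^a(D_T)}^a])^{1/q}$. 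Finally let $R\to\infty$ (Fatou) and $K\to\infty$ (Fatou, using the $K$-uniformity noted above and that $T_{q,K}(t)\to t^{2q}$, $T_{q,K}'(t)\,\kabs{t}^{-1}\to c(q)\,t^{2q-2}$ pointwise), and use $\mathcal{E}_K(0)\le\knorm{\dq u_0}_{L^{2q}(D)}^{2q}\le\knorm{D_ku_0}_{L^{2q}(D)}^{2q}=\knorm{W_q(D_ku_0)}_{L^2(D)}^2$, $\eta\equiv1$ on $D''$, and the independence of all constants of $h$ and $K$, to arrive at the claimed bound.

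\emph{Main obstacle.} The two genuinely delicate points are: (i) making the nonlinear It\^o formula of the first paragraph rigorous --- which is exactly why one truncates at level $K$, so that the energy functional is a bona fide $C^2$ map on $L^2$ with bounded second derivative --- and then recovering the untruncated estimate, which works because Lemma~\ref{tech-kalita-2}(ii) renders all subsequent upper bounds uniform in $K$; and (ii) the exponent bookkeeping in Step~1, namely checking that the three constraints on $q$ in~\eqref{restrict-q} are precisely what is needed for every lower-order term (those involving $D_uA$, $D_xA$, and the $f_H$-part of $\dq H$) to be absorbed using the hypothesis on $Y_p^pW_p(Du)$ together with~\eqref{GV-A},~\eqref{GV-H} and~\eqref{embedding}, while keeping the coefficient of $\int\eta^2\kabs{DW_q(\dq u)}^2$ positive --- the latter being what pins down the $L_H$-smallness condition.
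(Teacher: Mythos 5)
Your overall scheme is the paper's: truncate the power via Lemma~\ref{tech-kalita-2}, use the decomposition~\eqref{def-ABC}, extract the good term from Lemma~\ref{tech-kalita-2}(iii) and absorb the It\^o correction through $T_{q,K}''(t)t\le(2q-1)T_{q,K}'(t)$, which is exactly where the smallness condition on $L_H$ in~\eqref{restrict-q} enters; then exponential Gronwall weight, stopping time, Burkholder--Davis--Gundy at power $1/q$, and Fatou in $K$. One implementation difference: you invoke a nonlinear It\^o formula for the $C^2$ functional $\xi\mapsto\int_D\eta^2T_{q,K}(\kabs{\xi})\,dx$ on $L^2$, ``in the spirit of'' Theorem~\ref{Ito-II}; that theorem only covers the squared norm, and the paper instead applies the one-dimensional It\^o formula (Theorem~\ref{Ito-I}) at a.e.\ fixed $x$ -- legitimate because Corollary~\ref{cor-second-der} makes $\diverg\dq A$ an $L^2_{\rm loc}$ function so the equation holds pointwise -- and then integrates in $x$ and uses Fubini, which the truncation justifies. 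Your route can be made rigorous for the same reason (the drift is in $L^2_{\rm loc}$, so a Hilbert-space It\^o formula for $C^2$ functionals applies after localization), but as written the justification is looser than what the paper provides.

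The genuine gap is in your Step~3, where you say the stopping time is introduced ``exactly as in Lemma~\ref{Diff-quotients}'' and you then ``let $R\to\infty$ (Fatou)''. In Lemma~\ref{Diff-quotients} the quantity defining $\tau_R$, namely $\int_0^T\knorm{u_k'(s)}_{L^2(D)}^2\knorm{\dq H(s,Du(s))\eta}_{L^2(D)}^2\,ds$, is finite a.s.\ \emph{a priori} (it only involves $u\in V^2$ and~\eqref{GV-H}$_2$), so $P(\lim_{R\to\infty}\tau_R=T)=1$ is immediate and Fatou recovers the estimate on all of $[0,T]$. Here the corresponding quantity is $\int_0^T\bnorm{\kabs{\dq u(s)}^{2q-1}\eta^2\kabs{\dq H(\cdot,s,Du)}}_{L^1(D)}^2\,ds$, which involves powers of $\dq u$ that are precisely what the lemma is trying to control; its a.s.\ finiteness is \emph{not} known beforehand, so without further argument $\tau:=\lim_{R\to\infty}\tau_R$ could be strictly less than $T$ on a set of positive probability, and your limit in $R$ only yields the claimed bounds with $\sup_{t\in(0,\tau)}$ and $\int_0^\tau$ in place of $(0,T)$. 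The paper closes this in its Step~3c by an a posteriori bootstrap: from the stopped estimates one deduces that $\sup_{t\le\tau}\bnorm{W_q(\dq u(t))\eta}_{L^2}^2$ and $\int_0^\tau\bnorm{\kabs{\dq u}^{q-1}D\dq u\,\eta}_{L^2}^2\,ds$ are finite a.s., hence (by the same H\"older/growth estimates) so is the integral defining $\tau_R$ up to time $\tau$; if $\tau(\omega)<T$ this integral would have to be infinite by the definition of $\tau_R$, a contradiction, whence $P(\tau=T)=1$. Your proposal needs this (or an equivalent) argument; as stated, the passage $R\to\infty$ does not deliver the estimate on $[0,T]$.
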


\begin{remarks}
In the case of additive noise (with $L_H = 0$) the second condition
\eqref{restrict-q} for the restriction on the integrability exponent $q$ reduces
to the inequality $q < \frac{1}{1-\nu}$. For multiplicative noise instead, the
right-hand side in the second inequality~\eqref{restrict-q} is decreasing in $q$ 
(note that for $q=1$ it just reproduces the condition required in Lemma~\ref{Diff-quotients}) and allows the following interpretation. Obviously, the previous restriction $q < \frac{1}{1-\nu}$ for additive noise remains valid, and in fact 
the more multiplicative noise is considered (in the sense that $L_H$ should not be too small), the smaller will be the maximal integrability exponent which still 
satisfies both inequalities in~\eqref{restrict-q}. For this reason multiplicative 
noise might destroy some regularity in form of integrability of the gradient $Du$.

Moreover, we comment on the scaling of the hypothesis and the assertion with respect to $u$ and $u_0$, respectively, in order to avoid confusion. In view of the definition of $W_p$ it is easy to see that Lemma~\ref{diff-quotients-iteration} is stated in a way such that an weighted average of a quadratic quantity in $Du$ gives an information about the weighted average of a quadratic quantity in $\dq u$. In this sense, the scaling is the natural one.
\end{remarks}

\begin{proof}
We now follow the line of arguments from the proof of Lemma~\ref{Diff-quotients} (and of Lemma~\ref{lemma_apriori}), but this time we will estimate powers of the difference quotients $\dq u$.

\textbf{Step~1.} We consider $k \in \{1,\ldots,n\}$ arbitrary, $h \in \R$ with $\kabs{h} < d$,
and $\eta \in C^{\infty}(D',[0,1])$ a standard cut-off function satisfying $\eta
\equiv 1$ on $D''\Subset D'$ and $\kabs{D\eta} \leq c(d)$. We first observe
that, by the integrability assumption on $Du$ and the integrability assumption on $G_p$ (which implies strict positivity of $\inf_{t \in [0,T]} Y_p$ for $P$-almost every $\omega$), with probability one we have
\begin{equation*}
u \in L^{2p (\frac{n+2}{n})^2}_{\rm{loc}}\big(D'_T,\R^{N}\big) \cap
L^{\infty}_{\rm{loc}}\big(0,T; L^{2p \frac{n}{n-2}}(D',\R^N)\big) \,.
\end{equation*}
Furthermore, due to the restriction on $q$, it is guaranteed that
$W_q(Du)$ belongs to $L^2$ locally on $D'_T$ with probability one. For almost every (fixed) $x \in D$ we first consider finite differences in direction $e_k$
and stepsize $h$ of the differential equation~\eqref{equation}, i.e.
\begin{equation*}
d \, \eta^{\frac{1}{q}} \, \dq u(x,t)  =  \eta^{\frac{1}{q}} \, \diverg  \dq
A (x,t,u,Du) \dt +  \eta^{\frac{1}{q}} \, \dq H(x,t,Du) \, dB_t
\end{equation*}
in $(0,T)$ for $q\geq 1$. We next introduce (because of technical reasons) for
$K>0$ the approximating function $T_{q,K}(\cdot)$ of class $C^2$ according to Lemma
\ref{tech-kalita-2}, and we recall that $T_{q,K}$ satisfies in particular the
polynomial growth conditions $T_{q,K}(t) = t^{2q}$ for all $t \leq K$ and
$T_{q,K}(t) \leq c(q) K^{2q-2} t^{2}$ for all $t \in \R$. Employing the
one-dimensional It\^o formula (note that $\diverg A(x,t,u,Du)$ is as a
composition of $\mathcal{F}_t$-adapted functions again $\mathcal{F}_t$-adapted)
from Theorem~\ref{Ito-I}, applied with $g(t,u(x,t))= \eta^2 \, T_{q,K}
(\kabs{\dq  u(x,t)})$, we obtain the identity
\begin{align*}
\lefteqn{ \hspace{-0.5cm} d \big( \eta^2  T_{q,K} (\kabs{\dq u(x,t)}) \big) } \\
	&  =  \eta^{2} T'_K (\kabs{\dq u(x,t)}) \kabs{\dq u(x,t)}^{-1} 
	\sp{\dq u(x,t)}{\diverg \dq A (x,t,u,Du)}_{\R^N} \dt  \nonumber \\ 
	& \quad {}+ \frac{1}{2}  \eta^{2} \, \big[ T_{q,K}''(\kabs{\dq u(x,t)})
\kabs{\dq u(x,t)} 
	-  T_{q,K}'(\kabs{\dq u(x,t)}) \big] \nonumber \\
	& \qquad {} \times \kabs{\dq u(x,t)}^{-3}  \kabs{\sp{\dq u(x,t)}{\dq
H(x,t,Du)}}^2 
	\dt \nonumber \\
	& \quad {}+ \frac{1}{2} \eta^{2} \, T_{q,K}'(\kabs{\dq u(x,t)}) \,
\kabs{\dq u(x,t)}^{-1} \, 
	\kabs{\dq H(x,t,Du)}^2 \dt \nonumber \\
	& \quad {}+  \eta^{2} \, T'_{q,K} (\kabs{\dq u(x,t)}) \kabs{\dq
u(x,t)}^{-1} \, 
	\sp{\dq u(x,t)}{\dq H(x,t,Du)dB_t}_{\R^N}.
\end{align*}
In order to prove the assertion of the lemma, we start with a simple observation
concerning the terms involving $\dq H(x,t,Du)$. Taking into account the
properties of the function $T_{q,K}$, see Lemma~\ref{tech-kalita-2}, we estimate
\begin{multline*}
\big[ T_{q,K}''(\kabs{\dq u(x,t)}) \kabs{\dq u(x,t)} 
	-  T_{q,K}'(\kabs{\dq u(x,t)}) \big] \, \kabs{\dq u(x,t)}^{-3}  
	\kabs{\sp{\dq u(x,t)}{\dq H(x,t,Du)}}^2 \\
	\leq  2(q-1) \, T_{q,K}'(\kabs{\dq u(x,t)}) \, \kabs{\dq u(x,t)}^{-1}
\, \kabs{\dq H(x,t,Du)}^2 \,.
\end{multline*}
We next introduce the abbreviation 
\begin{equation*}
V(\xi)  :=  T'_{q,K} (\kabs{\xi}) \, \kabs{\xi}^{-1} \xi
\end{equation*}
for all $\xi \in \R^N$, and we note $\kabs{V(\xi)} = T'_{q,K} (\kabs{\xi})$. Now we integrate
over $x \in D$, and then we apply Fubini which due to the truncation procedure
is always allowed, see Lemma~\ref{tech-kalita-2} ii). Applying the integration
by parts formula, we hence obtain
\begin{align}
\label{ineq-it-prelim}
\lefteqn{\hspace{-0.5cm} \bnorm{ (T_{q,K} \kabs{\dq u(t)})^{\frac{1}{2}} \eta
}^2_{L^2(D)} 
	+ \int_0^t \sp{D \big(  V(\dq u(s)) \, \eta^2 \big)}
	{\dq A (\cdot,s,u,Du)}_{L^2(D)} \ds} \nonumber \\
	& \leq  \bnorm{ (T_{q,K} \kabs{\dq u_0})^{\frac{1}{2}} \eta
}^2_{L^2(D)} \nonumber \\
	& \quad {}+ (q - 2^{-1}) \int_0^t 
	\bnorm{T_{q,K}'(\kabs{\dq u(s)})^{\frac{1}{2}} \, 
	\kabs{\dq u(s)}^{-\frac{1}{2}} \, \dq H(\cdot,s,Du) \, \eta}_{L^2(D)}^2 \ds
\nonumber \\
	& \quad {}+ \int_0^t  
	\sp{  V(\dq u(x,s)) \,\eta^2 }{\dq H(\cdot,s,Du) \, dB_s}_{L^2(D)}
\end{align}
Now the second term on the left-hand side of this inequality shall be
estimated. Using the decomposition introduced in~\eqref{def-ABC} and applying
Lemma~\ref{tech-kalita-2}, we first find for every $\epsilon > 0$:
\begin{align*}
\lefteqn{\hspace{-0.25cm} \sp{D\big( V(\dq u(s)) \,\eta^2 \big)}{{\cal A} (h)
}_{L^2(D)} } \\
	& =  \kappa^{-1} \sp{D\big( V(\dq u(s)) \, \eta^2 \big)}{ D \dq u
}_{L^2(D)} 
	- \kappa^{-1} \sp{D\big( V(\dq u(x,s)) \, \eta^2 \big)}
	{D \dq u - \kappa \, {\cal A} (h) }_{L^2(D)} \\
	& \geq  \kappa^{-1} \knorm{D \big(V(\dq u(s))\big) \cdot D \dq u \,
\eta^2}_{L^1(D)} 
	- 2 \epsilon \, \knorm{T_{q,K}'(\kabs{\dq u})^{\frac{1}{2}} \, \kabs{\dq
u}^{-\frac{1}{2}}
	 \, D \dq u \, \eta}_{L^2(D)}^2 \\
	& \quad {}  - c(q,\kappa,\epsilon) \, 
	\knorm{\kabs{\dq u }^{q} \, D\eta}_{L^2(D)}^2 
	- \kappa^{-1} \, (1-\nu^2)^{\frac{1}{2}} \, 
	\knorm{D\big(V(\dq u(s))\big)
	\, \kabs{D \dq u } \, \eta^2}_{L^1(D)}  \\
	& \geq  \big( \kappa^{-1} \, \mu^{\frac{1}{2}}(q) - 
	\kappa^{-1} \, (1-\nu^2)^{\frac{1}{2}} - 2 \epsilon \big) \, 
	\knorm{T_{q,K}'(\kabs{\dq u})^{\frac{1}{2}} \, 
	\kabs{\dq u}^{-\frac{1}{2}} \, D \dq u \, \eta}_{L^2(D)}^2 \\
	& \quad {} - c(q,\kappa,\epsilon,\knorm{D\eta}_{L^{\infty}(D)}) \, 
	\knorm{W_{q}(\dq u)}_{L^2(D')}^2 \,. 
\end{align*}
We observe from the definition of $\mu(q)$ and the second bound in
\eqref{restrict-q} on $q$ that the factor $\mu^{\frac{1}{2}}(q)  -
(1-\nu^2)^{\frac{1}{2}}$ appearing in the previous inequality is always strictly
positive. Now, for the second term in the decomposition~\eqref{def-ABC} we obtain via the inequalities $T''_{q,K}(t) t^2 \leq c(q) T'_{q,K}(t) t \leq c(q) T_{q,K}(t)$ on $\R^+$ and the Sobolev-Poincar\'{e} embedding (applied on every time-slice):
\begin{align*}
\lefteqn{\hspace{-0.25cm} \babs{ \sp{D \big( V(\dq u(s)) \, \eta^2\big)}{{\cal B}
(h) }_{L^2(D)} }}\\
	& \leq  c(L,q)  \, \big( \knorm{T_{q,K}'(\kabs{\dq u})^{\frac{1}{2}}
\, 
	\kabs{\dq u}^{-\frac{1}{2}} D \dq u \, \eta}_{L^2(D)} 
	+ \knorm{T_{q,K}'(\kabs{\dq u})^{\frac{1}{2}} \, 
	\kabs{\dq u}^{\frac{1}{2}} \, D\eta}_{L^2(D)} \big) \\
	& \qquad {} \times \knorm{T_{q,K}'(\kabs{\dq u})^{\frac{1}{2}} 
	\kabs{\dq u}^{-\frac{1}{2}} \, {\cal B} (h)  \, \eta}_{L^2(D)} \\
	& \leq  c(L,q)  \, \big( \knorm{T_{q,K}'(\kabs{\dq u})^{\frac{1}{2}}
\, 
	\kabs{\dq u}^{-\frac{1}{2}} D \dq u \, \eta}_{L^2(D)} 
	+ \knorm{T_{q,K}(\kabs{\dq u})^{\frac{1}{2}} \, D\eta}_{L^2(D)} \big) \\
	& \qquad {} \times \knorm{T_{q,K}'(\kabs{\dq u})^{\frac{1}{2}} \, 
	\kabs{\dq u}^{-\frac{1}{2}} \dq u \,
\eta}_{L^{\frac{2n}{n-2}}(D)}^{\theta} 
	\, \knorm{T_{q,K}'(\kabs{\dq u})^{\frac{1}{2}} \, 
	\kabs{\dq u}^{-\frac{1}{2}} \dq u \, \eta}_{L^2(D)}^{1-\theta} \\
	& \qquad {} \times \big( \knorm{Du}_{L^{\frac{2n}{(n+2)\theta}}(\supp
\eta)}^{\frac{2}{n+2}} 
	+ \knorm{u}_{L^{\frac{2}{\theta}}(\supp \eta)}^{\frac{2}{n}} 
	+ \knorm{f}_{L^{\frac{n}{\theta}}(D)} \big) \\
	& \leq  \big( \knorm{T_{q,K}'(\kabs{\dq u})^{\frac{1}{2}} \, 
	\kabs{\dq u}^{-\frac{1}{2}} D \dq u \, \eta}_{L^2(D)}^{1+\theta} 
	+ \knorm{T_{q,K}(\kabs{\dq u} )^{\frac{1}{2}} \, D\eta}_{L^2(D)}^{1+\theta} \big) 
	\\
	& \qquad {} \times c(n,D,T,L,q) \, \knorm{T_{q,K}(\kabs{\dq
u})^{\frac{1}{2}} 
	\, \eta}_{L^2(D)}^{1-\theta} \, 
	\big( \knorm{Du}_{L^{\frac{2n}{(n+2)\theta}}(\supp
\eta)}^{\frac{2}{n+2}} 
	+ \knorm{u}_{L^{\frac{2}{\theta}}(\supp \eta)}^{\frac{2}{n}} 
	+ \knorm{f}_{L^{\frac{n}{\theta}}(D)} \big)
\end{align*}
for every $\theta \in (0,1)$. We now choose $\theta = \max\{p^{-1}
(\frac{n}{n+2})^2,\frac{n}{a}\}$, for which the last expression in brackets of
the previous inequality is consequently bounded with probability one, according
to the integrability assumptions on $f, Du$ and the consequences on the
integrability of $u$ explained at the beginning of the proof. Young's inequality
then implies
\begin{align*}
\babs{ \sp{D \big( V(\dq u(x,s)) \,  \eta^2 \big)}{{\cal B} (h) }_{L^2(D)} } 
	& \leq  \epsilon \, \knorm{T_{q,K}'(\kabs{\dq u})^{\frac{1}{2}} \, 
	\kabs{\dq u}^{-\frac{1}{2}} D \dq u \, \eta}_{L^2(D)}^2  \\
	& \quad {}+ c(n,D,T,L,q,\knorm{D\eta}_{L^{\infty}(D)},\epsilon) \, 
	\big(\knorm{T_{q,K}(\kabs{\dq u})^{\frac{1}{2}} 
	\, \eta}_{L^2(D)}^{2} + 1 \big) \\
	& \qquad {} \times \big( 1 + \knorm{Du}_{L^{2p \frac{n+2}{n}}(\supp
\eta)}^{2p \frac{n+2}{n}} +
	\knorm{u}_{L^{2p (\frac{n+2}{n})^2}(\supp \eta)}^{2p (\frac{n+2}{n})^2} 
	+ \knorm{f}_{L^{a}(D)}^{a} \big) \,.
\end{align*}
Finally,  via the bounds for $q$ in terms of $n,p,a$ and $\nu$, the last term in the decomposition involving ${\cal C}(h)$ is estimated with Young's inequality and the well-known estimates for finite difference quotients by
\begin{align*}
\lefteqn{\hspace{-0.25cm} \babs{ \sp{D \big( V(\dq u(s)) \, \eta^2 \big)}{{\cal C}
(h) }_{L^2(D)} }} \\
	& \leq  c \, \big(\knorm{T_{q,K}'(\kabs{\dq u})^{\frac{1}{2}}
	\kabs{\dq u}^{-\frac{1}{2}} D \dq u \, \eta}_{L^2(D)} 
	+ \knorm{T_{q,K}(\kabs{\dq u} )^{\frac{1}{2}} \, D\eta}_{L^2(D)} \big) \\
	& \qquad {} \times \knorm{T_{q,K}'(\kabs{\dq u})^{\frac{1}{2}} 
	\kabs{\dq u}^{-\frac{1}{2}} \, {\cal C} (h)  \, \eta}_{L^2(D)} \\
	& \leq  \epsilon \, \knorm{T_{q,K}'(\kabs{\dq u})^{\frac{1}{2}} \, 
	\kabs{\dq u}^{-\frac{1}{2}} D \dq u \, \eta}_{L^2(D)}^2  \\
	& \quad {} + 
	c(D,T,L,q,\knorm{D\eta}_{L^{\infty}(D)},\epsilon) \, 
	\big( 1+ \knorm{Du}_{L^{2p \frac{n+2}{n}}(\supp \eta)}^{2p
\frac{n+2}{n}} +
	\knorm{u}_{L^{2p (\frac{n+2}{n})^2}(\supp \eta)}^{2p (\frac{n+2}{n})^2} 
	+ \knorm{f}_{L^{a}(D)}^{a} \big)
\intertext{provided that $4q \leq a$. For the general case, one again has to argue more subtle, using the Sobolev embedding on time slices as for the term with ${\cal B}(h)$. With the analogous calculations as before this yields}
\lefteqn{\hspace{-0.25cm} \babs{ \sp{D \big( V(\dq u(s)) \, \eta^2 \big)}{{\cal C}
(h) }_{L^2(D)} }} \\
  & \leq \epsilon \, \knorm{T_{q,K}'(\kabs{\dq u})^{\frac{1}{2}} \, 
	\kabs{\dq u}^{-\frac{1}{2}} D \dq u \, \eta}_{L^2(D)}^2  \\
  & \quad {}+ c(n,D,T,L,q,\knorm{D\eta}_{L^{\infty}(D)},\epsilon) \, 
	\big(\knorm{T_{q,K}(\kabs{\dq u})^{\frac{1}{2}} 
	\, \eta}_{L^2(D)}^{2} + 1 \big) \\
	& \qquad {} \times \big( 1 + \knorm{Du}_{L^{2p \frac{n+2}{n}}(\supp
\eta)}^{2p \frac{n+2}{n}} +
	\knorm{u}_{L^{2p (\frac{n+2}{n})^2}(\supp \eta)}^{2p (\frac{n+2}{n})^2} 
	+ \knorm{f}_{L^{a}(D)}^{a} \big) \,.
\end{align*}
It now still remains to handle the second term on the right-hand side of inequality~\eqref{ineq-it-prelim}. With the assumptions~\eqref{GV-H} on $H$ and Young's inequality, we easily find
\begin{align*}
\lefteqn{\hspace{-0.25cm} \bnorm{T_{q,K}'(\kabs{\dq u(s)})^{\frac{1}{2}} \, 
	\kabs{\dq u(s)}^{-\frac{1}{2}} \, \dq H(\cdot,s,Du) \, \eta}_{L^2(D)}^2 }\\
  & \leq (L_H^2 + \epsilon) \, \knorm{T_{q,K}'(\kabs{\dq u})^{\frac{1}{2}} \, 
	\kabs{\dq u}^{-\frac{1}{2}} D \dq u \, \eta}_{L^2(D)}^2  \\
  & \quad {} +  c(L,L_H,\varepsilon) \, \big( 1 + \knorm{Du}_{L^{2p \frac{n+2}{n}}(\supp
\eta)}^{2p \frac{n+2}{n}} + \knorm{f_H}_{L^{a}(D)}^{a} \big)\,.
\end{align*}
For every $s \in (0,T)$ we now define
\begin{equation}
\label{def-G''}
G''(u,f)(s) :=  \frac{2q}{c''} G_p(s) + 1 + \knorm{Du}_{L^{2p \frac{n+2}{n}}(D')}^{2p
\frac{n+2}{n}} +
	\knorm{u}_{L^{2p (\frac{n+2}{n})^2}(D')}^{2p (\frac{n+2}{n})^2} 
	+ \knorm{f}_{L^{a}(D)}^{a} \,,
\end{equation}
which is a $L^{1}(0,T)$ with probability one. Furthermore, we set $G_q := \frac{1}{2q} c'' G''(u,f) \geq G_p$ which immediately gives $Y_q \leq Y_p$. Then, taking into account
the smallness condition~\eqref{restrict-q}, choosing $\epsilon$ sufficiently
small and combining the previous estimates for the various terms arising in~\eqref{ineq-it-prelim}, we find a preliminary (though still $K$-depending) pathwise estimate
\begin{align*}
\lefteqn{ \hspace{-0.5cm} \knorm{ T_{q,K} (\kabs{\dq
u(t)})^{\frac{1}{2}} \eta }^{2}_{L^{2}(D)} +  c^{-1}(L_H,\kappa,\nu)  \int_0^t 
\knorm{T_{q,K}'(\kabs{\dq u})^{\frac{1}{2}} \, \kabs{\dq u}^{-\frac{1}{2}}
	 \, D \dq u \, \eta}_{L^2(D)}^2 \ds } 
	\\
	& \leq  \knorm{ T_{q,K} (\kabs{\dq u_0})^{\frac{1}{2}} \eta
}^{2}_{L^{2}(D)} 
	+ c'' \int_0^t \big(\knorm{ T_{q,K} (\kabs{\dq
u(t)})^{\frac{1}{2}} \eta}_{L^2(D)}^{2} 
	+ 1 \big) \, G''(u,f) \ds  \\
	& \quad {} + c \int_0^t 
	\bnorm{f_H(s)}_{L^a(D)}^a \ds + \int_0^t  
	\sp{  V(\dq u(x,t)) \, \eta^2}{\dq H(\cdot,s,Du)\,
dB_s}_{L^2(D)}  \,.
\end{align*}

\textbf{Step~2.} We may now apply in a first step It\^o's formula in exactly the same way as before in the derivation of estimate~\eqref{higher4}:
\begin{align*}
\lefteqn{  \hspace{-0.5cm}  e^{-\int_0^t c'' \, G''(u,f) \ds } \, 
\bnorm{  T_{q,K} (\kabs{\dq
u(t)})^{\frac{1}{2}} \eta}^{2}_{L^{2}(D)}  } \\
\lefteqn{ \hspace{-0.5cm} {}+ c^{-1} \int_0^t
	e^{-\int_0^s c'' \, G''(u,f) \, d\tilde{s} } \, 
	\bnorm{T_{q,K}'(\kabs{\dq u})^{\frac{1}{2}} \, \kabs{\dq u}^{-\frac{1}{2}}
	 \, D \dq u \, \eta }_{L^2(D)}^2 \ds  }  \\
	& \leq  \bnorm{T_{q,K} (\kabs{\dq u_0})^{\frac{1}{2}} \eta }^{2}_{L^{2}(D)}
	+ 1  + c \int_0^t \bnorm{f_H(s)}_{L^a(D)}^a \ds  \\
	& \quad {} + c \int_0^t e^{-\int_0^s c'' \, G''(u,f) \, d\tilde{s} } \,  
	\sp{V(\dq u(x,t)) \, \eta^2 }{\dq H(\cdot,s,Du) \,
dB_s}_{L^2(D)} \,.
\end{align*}

\textbf{Step~3.} Similarly to the proof of Lemma~\ref{Diff-quotients}, we introduce the random time
\begin{equation*}
\tau _{R}:=\inf \Big\{ t\in [ 0,T] :\int_{0}^{t} \bnorm{
\left\vert \Delta _{k,h}u(s) \right\vert ^{2q-1}\eta
^{2}\left\vert \Delta _{k,h}H\left( \cdot ,s,Du\right) \right\vert
}_{L^{1}(D) }^{2} \ds>R \Big\} 
\end{equation*}%
with $\tau _{R}=T$ when the set is empty. Differently from Lemma~\ref{Diff-quotients}, the
property 
\begin{equation*}
P\Big( \int_{0}^{T} \bnorm{ \left\vert \Delta _{k,h}u(s)
\right\vert ^{2q-1}\eta ^{2}\left\vert \Delta _{k,h}H\left( \cdot
,s,Du\right) \right\vert }_{L^{1}( D) }^{2} \ds<\infty
\Big) =1
\end{equation*}
which is needed to have $P\left( \lim_{R\rightarrow \infty }\tau
_{R}=T\right) =1$ is not clear a priori. We shall prove it a posteriori.

Notice that, by Lemma~\ref{tech-kalita-2}, 
\begin{multline*}
\quad \int_{0}^{t\wedge \tau _{R}}e^{-2\int_{0}^{s}c'' G''(u,f) d\tilde{s}} \Big( \int_{D} \kabs{V(\dq u(x,s))} \, \eta^{2} \left\vert
\Delta _{k,h}H(s,Du) \right\vert dx \Big)^{2} \ds \\
\leq \int_{0}^{t\wedge \tau _{R}} \bnorm{ \left\vert \Delta
_{k,h}u(s) \right\vert^{2q-1}  \eta ^{2} \left\vert \Delta _{k,h}H(
\cdot ,s,Du) \right\vert }_{L^{1}(D)}^{2} \ds \leq R \,. \quad
\end{multline*}

\textbf{Step~3a.} 
The last calculation shows that the stochastic integral from Step~2, stopped at $\tau _{R}$, is a martingale (and thus it has zero expectation). Therefore (as in Lemma~\ref{Diff-quotients})
\begin{align*}
\lefteqn{  \hspace{-0.5cm} E \Big[ e^{-\int_{0}^{t\wedge \tau _{R}}c'' G''(
u,f) ds}  \bnorm{  T_{q,K} (\kabs{\dq
u(t\wedge \tau _{R})})^{\frac{1}{2}} \eta}^{2}_{L^{2}(D)} \Big] } \\
\lefteqn{ \hspace{-0.5cm} {}+ c^{-1} E \Big[ \int_{0}^{t\wedge \tau _{R}}e^{-\int_{0}^{s}c'' G''(u,f) d\tilde{s}}\bnorm{T_{q,K}'(\kabs{\dq u})^{\frac{1}{2}} \, \kabs{\dq u}^{-\frac{1}{2}} \, D \dq u \, \eta }_{L^2(D)}^2  \ds \Big] } \\
& \leq  \bnorm{T_{q,K} (\kabs{\dq u_0})^{\frac{1}{2}} \eta }^{2}_{L^{2}(D)} +1+c \, E \Big[ \int_{0}^{T} \bnorm{
f_{H}(s)} _{L^{a}(D) }^{a} \ds \Big] .
\end{align*}
At this stage we may pass to the limit $K \to \infty$ via Fatou's Lemma on the
left-hand side and monotone convergence on the right-hand side, and we obtain
\begin{align}
\lefteqn{  \hspace{-0.5cm} E \Big[ e^{-\int_{0}^{t\wedge \tau _{R}}c'' G''(
u,f) ds} \bnorm{
W_{q}( \Delta _{k,h}u (t \wedge \tau_{R})) \, \eta}_{L^{2}(D) }^{2} \Big] } \nonumber \\
\lefteqn{ \hspace{-0.5cm} {}+ c^{-1} E \Big[ \int_{0}^{t\wedge \tau _{R}}e^{-\int_{0}^{s}c'' G''(u,f) d\tilde{s}} \bnorm{ \kabs{\dq u(s)}^{q-1} D \dq u(s) \, \eta }_{L^2(D)}^2  \ds \Big] } \nonumber \\
& \leq  \bnorm{W_q (\dq u_0) \, \eta }^{2}_{L^{2}(D)} +1+c \, E \Big[ \int_{0}^{T} \bnorm{
f_{H}(s)} _{L^{a}(D) }^{a} \ds \Big] .
\label{first step}
\end{align}

\textbf{Step~3b.} Next we apply Burkholder-Davis-Gundy inequality to the inequality above
stopped at $\tau _{R}$, raised to the power $\frac{1}{q}$. Taking the limit $K \to \infty$ as in~\eqref{first step}, we get
\begin{align*}
\lefteqn{ \hspace{-0.5cm} E \Big[ \sup_{t \in [0,T] }e^{- \frac{1}{q} \int_{0}^{t\wedge \tau _{R}}c'' G''(u,f) ds} \bnorm{ W_{q}( \Delta _{k,h}u (t \wedge \tau_{R})) \, \eta }^{\frac{2}{q}}_{L^{2}(D)}  \Big] } \\
&\leq \bnorm{W_q (\dq u_0) \, \eta}^{\frac{2}{q}}_{L^{2}(D)} 
+1+c \, E \Big[ \Big( \int_{0}^{T} \bnorm{ f_{H}(s)}_{L^{a}(D) }^{a} \ds \Big)^{\frac{1}{q}} \Big]  \\
& \quad {}+ C \, E \Big[ \Big( \int_{0}^{T\wedge \tau
_{R}}e^{-2\int_{0}^{s}c'' G''( u,f) d\tilde{s}} 
\bnorm{ \kabs{ \dq u(s)}^{2q-1} \eta^{2} \left\vert \Delta _{k,h}H( \cdot ,s,Du) \right\vert
}_{L^{1}(D) }^{2} \ds \Big)^{\frac{1}{2q}} \Big] \,.
\end{align*}
Since due to H\"older's inequality we have
\begin{align*}
\lefteqn{ \hspace{-0.5cm} \bnorm{ \kabs{ \Delta _{k,h} u(s)}^{2q-1}\eta^{2} 
\left\vert \Delta _{k,h}H(\cdot ,s,Du)
\right\vert }_{L^{1}(D)}^{2}}  \\
&\leq \left\Vert \left\vert \Delta _{k,h}u(s) \right\vert ^{q}\eta
\right\Vert _{L^{2}(D) }^{2} \bnorm{ \left\vert \Delta
_{k,h}u (s) \right\vert ^{q-1}\eta \left\vert \Delta
_{k,h}H( \cdot ,s,Du) \right\vert }_{L^{2}(D) }^{2} \,,
\end{align*}
the last term of the previous inequality, similarly to the proof of Lemma~\ref{Diff-quotients}, is bounded by
\begin{equation*}
c \, E\big[ I_{1}^{1/2}I_{2}^{1/2} \big] \leq \frac{1}{2}E[ I_{1}] +
\frac{C^{2}}{2}E[ I_{2}] 
\end{equation*}
where
\begin{align*}
I_{1}& =\sup_{t\in [ 0,T] }e^{- \frac{1}{q} \int_{0}^{t\wedge \tau
_{R}}c'' G''(u,f) \,d\tilde{s}}\, \bnorm{ W_{q}\left( \Delta _{k,h}u( t \wedge \tau _{R})
\right) \eta }_{L^{2}(D) }^{\frac{2}{q}}  \, ,\\
I_{2}& = \Big( \int_{0}^{T \wedge \tau _{R}} e^{-\int_{0}^{s}c'' G''(u,f) \,d\tilde{s}}\, \bnorm{ \left\vert \Delta_{k,h}u(s) \right\vert ^{q-1}\eta \left\vert \Delta _{k,h}H( \cdot ,s,Du) \right\vert }_{L^{2}(D) }^{2} \ds \Big)^{\frac{1}{q}} \, .
\end{align*}
Hence, we have proved that
\begin{equation*}
\frac{1}{2}E[ I_{1}] \leq \bnorm{  W_{q}( \Delta_{k,h}u_0) \eta }_{L^{2}(D)}^{\frac{2}{q}}
+1+c \, E \Big[ \Big( \int_{0}^{T}\bnorm{f_{H}(s)}_{L^{a}(D)}^{a} \ds \Big)^{\frac{1}{q}}\Big] + 
\frac{C^{2}}{2}E[I_{2}] \,.
\end{equation*}
Now, by the assumptions~\eqref{GV-H} on $H$, Young's inequality and the bound on $q$, we have
\begin{align*}
\frac{C^2}{2} E[ I_{2}] & \leq C \, E\Big[ \Big( \int_{0}^{T \wedge \tau
_{R}} e^{-\int_{0}^{s}c'' G''(u,f) d\tilde{s}
}\bnorm{ \kabs{ \Delta _{k,h}u(s) }^{q-1} D \dq u(s) \, \eta }_{L^{2}(D)}^{2} \ds \Big)^{\frac{1}{q}}\Big] \nonumber \\ 
& \quad{}+ C + C \, E
\Big[ \Big( \int_{0}^{T} \bnorm{ f_{H}(s) }_{L^{a}(D)}^{a} \ds \Big)^{\frac{1}{q}} \Big] + \frac{1}{4} E[ I_{1}] \\
 & \quad {}+  C \, E \Big[ \Big( \int_{0}^{T} e^{-\int_{0}^{s}c'' G''(u,f) d\tilde{s}}  \bnorm{ W_{q}(Du(s)) }_{L^{2}(D') }^{2} \ds \Big)^{\frac{1}{q}} \Big] \,.
\label{crucial}
\end{align*}
We observe that the last term remains bounded, due to the assumption of the lemma on the average and the choice of $G''(u,f)$ (which ensures that $c'' G''(u,f) \geq 2 q G_p$). Thus, by inequality~\eqref{first step} proved above, we find
\begin{equation*}
\frac{1}{4}E[I_{1}] \leq c \bnorm{ W_{q}( \Delta
_{k,h}u_0) \, \eta}_{L^{2}(D) }^{\frac{2}{q}}+c+c \, E
\Big[ \int_{0}^{T} \bnorm{ f_{H}(s) }_{L^{a}(D)}^{a} \ds \Big]^{\frac{1}{q}} 
\end{equation*}
with a new constant. 

\textbf{Step~3c.} In Step~3a and Step~3b we almost proved the two bounds claimed by the lemma since the previous inequality along with~\eqref{first step} gives us
\begin{align*}
\lefteqn{ \hspace{-0.5cm} E\Big[ \sup_{t \in [0,T] }e^{- \frac{1}{q} \int_{0}^{t \wedge \tau _{R}}c'' G''(u,f) ds} \bnorm{W_{q}( \Delta _{k,h}u( t \wedge \tau _{R}))
\eta }_{L^{2}(D) }^{\frac{2}{q}} \Big] } \\
\lefteqn{ \hspace{-0.5cm} {}+ E\Big[ \Big( \int_{0}^{T \wedge \tau _{R}}e^{-\int_{0}^{s}c'' G''(u,f) d\tilde{s}} \bnorm{ \left\vert \Delta
_{k,h}u(s)\right\vert^{q-1} D \dq u(s) \, \eta }_{L^{2}(D)}^{2} \ds \Big)^{\frac{1}{q}}\Big] } \\
&\leq c \, \Big( \bnorm{ W_{q}( \Delta _{k,h}u_0) \, \eta }_{L^{2}(D) }^{2}
+c+c \, E \Big[ \int_{0}^{T} \bnorm{f_{H}(s)}_{L^{a}(D) }^{a} \ds \Big] \Big)^{\frac{1}{q}} \,.
\end{align*}
It now remains to justify (as already observed above) the limit $\tau_R \to T$ as $R \to \infty$ with probability one. Indeed, since $R\mapsto \tau _{R}$ is non-decreasing and bounded above by $T$, there exists the a.s. limit 
\begin{equation*}
\tau :=\lim_{R\rightarrow \infty }\tau _{R}
\end{equation*}
and $\tau (\omega) \in [0,T]$. By Fatou's lemma and monotone convergence, 
\begin{multline*}                                                                                
\quad E\Big[ \Big( \sup_{t \in [0,\tau] }e^{- \int_{0}^{t}c'' G''(u,f) ds} \bnorm{ W_{q}( \Delta_{k,h}u(t) ) \, \eta }_{L^{2}(D)}^{2} \\ + 
\int_{0}^{\tau} e^{-\int_{0}^{s}c'' G''(u,f) d\tilde{s}} \bnorm{ \kabs{ \Delta _{k,h}u(s)}^{q-1} D \dq u(s) \, \eta }_{L^{2}(D)}^{2} \ds \Big)^{\frac{1}{q}} \Big] \quad
\end{multline*}
is finite, hence the argument of the expectation is finite with probability
one. Since $\int_{0}^{T}c'' G''(u,f) \ds$ is finite with probability one, we get
\begin{equation*}
\sup_{t \in [0,\tau]} \bnorm{W_{q}(\Delta_{k,h}u(t) ) \, \eta }_{L^{2}(D)}^{2} 
+\int_{0}^{\tau} \bnorm{ \kabs{\Delta_{k,h}u(s) }^{q-1} D \dq u(s) \, \eta }_{L^{2}(D) }^{2} \ds<\infty 
\end{equation*}
with probability one. Thus (with the same inequalities used above)
\begin{multline*}
\quad \int_{0}^{\tau }\bnorm{ \kabs{ \Delta _{k,h}u(s) }^{2q-1} \eta^{2} \left\vert \Delta _{k,h}H( \cdot,s,Du) \right\vert }_{L^{1}(D) }^{2} \ds \\ 
  \leq  C \Big( 1 + \sup_{t \in [0,\tau]} \bnorm{W_{q}(\Delta_{k,h}u(t) ) \, \eta }_{L^{2}(D)}^{2}  + \int_{0}^{\tau } \bnorm{ \kabs{\Delta
_{k,h}u(s) }^{q-1} D \dq u(s) \, \eta }_{L^{2}(D)}^{2} \ds \\
  + \int_{0}^{T} \bnorm{ W_{q}(Du(s)) }_{L^{2}(D') }^{2} \ds  
  + \int_{0}^{T} \bnorm{ f_{H}(s) }_{L^{a}(D)}^{a} \ds  \Big)^2 < \infty \quad
\end{multline*}
with probability one. If $\tau(\omega) <T$, by definition of $
\tau_{R}$ we have
\begin{equation*}
\int_{0}^{\tau }\bnorm{ \kabs{ \Delta _{k,h}u(s) }^{2q-1}\eta^{2} \left\vert \Delta _{k,h}H(\cdot
,s,Du) \right\vert }_{L^{1}(D)}^{2} \ds=\infty 
\end{equation*}
which is false, hence $P( \tau =T ) =1$. Having this basic fact,
the same estimates just proved give us the result of the lemma, by taking into account the inequality $\kabs{DW(\dq u)} \leq q \kabs{\dq u}^{q-1} \kabs{D \dq u}$ and the definition of $G_q$ (and hence of $Y_q$) given after~\eqref{def-G''}.
\end{proof}

\section{Proof of the regularity result}

Having the previous lemma at hand, we may now proceed to our main result.

\begin{theorem}
\label{main-result}
Let $u$ be a weak solution to the initial boundary value problem
to~\eqref{equation} with initial values $u(\ccdot,0) = u_0(\cdot) \in W^{1,a-2}(D,\R^N)$. Assume further the assumptions~\eqref{GV-A} with $\nu >  (n- 2)/n$ such that
\begin{equation*}
L_H^2 < (L_H^*)^2(n) := 
\frac{2}{\kappa \, (n-1)} \Big( \Big[ 1 - \Big( \frac{n-2}{n}\Big)^2
\Big]^{\frac{1}{2}} - \Big[ 1 - \nu^2 \Big]^{\frac{1}{2}} \Big) \,.
\end{equation*}
Then there exists $\alpha > 0$ depending only on $n,\nu$ and $a$ such that for 
every subset $D_c \Subset D$ we have
\begin{equation*}
P \big( \knorm{u}_{C^{0,\alpha}(D_c \times [0,T],\R^N)}  <  \infty \big) 
=  1 \,.
\end{equation*}

\begin{proof}
To prove the result, we want to apply Proposition~\ref{prop_Hoelder}. Therefore, the crucial point is to show higher integrability of $Du$ for ``great'' powers with probability one, in order that hypothesis~\eqref{int_assumption_Hoelder} of the proposition is satisfied. We start by defining a sequence
\begin{align*}
\tilde{q}_0 & := 1 \,, \\
\tilde{q}_{j+1} & := \min\Big\{ q_j \frac{n+2}{n} , 1 + q_j
\frac{n+2}{n} \frac{a-4}{a}, \frac{a-2}{2}, q_j + 1 \Big\}  \text{ for } j \geq 1 \,.
\end{align*}
Before defining a further sequence $(q_{j})$ in order to perform the iteration, we make some observations on $L_H^*(s)$ as a function in $s \in [1,2/(1 - \nu)]$ (we note that $L_H^*(2q)$ already appeared in hypothesis~\eqref{restrict-q} which gave an upper bound for $q$ in the iteration). Clearly, $L_H^*(s)$ is strictly decreasing in $s$, with $L_H^*(2/(1 - \nu)) = 0$. 

We now set $q_{j} = \tilde{q}_{j}$ as long as $\tilde{q}_{j} > \tilde{q}_{j-1}$
and $L_H^*(2\tilde{q}_{j}) > L_H$, and for the first index $j$ which doesn't
satisfy these assumptions any more we set $q_{j} = q^*$ for a number $q^* > n/2$
(which is determined below). In what
follows we shall denote this set of indices by $J \subset N_0$. We first study
some properties of the sequence $\tilde{q}$ and give a definition of the final
member $q^*$ of the sequence $(q_{j})_{j \in J}$: the first and the forth term
in the rewritten formula for $\tilde{q}$ are strictly increasing in $j$ and
diverge for $j \to \infty$, whereas the the monotonicity properties of the
second term depend on both the values of $a$ and the size of $q_j$. More
precisely, if $a \geq 2(n+2)$, then the second term increases with $j$ and
diverges for $j \to \infty$, but for every $a \in (n+2,2(n+2))$ it increases
only up to $q_{max}(a,n) = na / (4(n+2) -2a) > n/2$. Observing $L_H^*(n)> L_H$ by assumption, we thus define
\begin{equation*}
q^*  :=    \text{arbitrary number in } \Big(\frac{n}{2}, \min \Big \{(L^*_H)^{-1}(L_H),\frac{a-2}{2},q_{max} \Big \}\Big) \,.
\end{equation*}
It is easy to calculate that this number $q^*$ is reached after a finite number
of steps (depending only on $n,\nu,a$ and the difference $q_{max}-q^*$ (in the
sense that the number of steps diverges as $q^* \nearrow q_{max}$), hence
$\kabs{J} < \infty$, i.e. $(q_{j})_{j \in J}$ is a finite sequence.

We are now going to establish by induction that for every $j \in J$  we have
\begin{align*}
(i) \quad & \sup_{\kabs{h} < \dist(D_j,\partial D_{j-1})} E \big[
\bnorm{Y_{q_j}^{q_j} \, W_{q_j}(\dq u) }^{2/q_j}_{V^2(D_j \times (0,T))} \big]  \leq  C_j \text{
for all } k \in \{1,\ldots,n\} \,, \\
(ii) \quad & \sup_{\kabs{h} < \dist(D_j,\partial D_{j-1})} E \big[ \bnorm{ Y_{q_j}^{q_j}
W_{q_j}(\dq u) }^{2/q_j}_{L^{2 \frac{n+2}{n}}(D_j \times (0,T))} \big]  \leq  c(n,D_j)
\, C_j \text{ for all } k \in \{1,\ldots,n\} \,, \\
(iii) \quad &  E \big[ \bnorm{ Y_{q_j}^{q_j}
W_{q_j}(Du) }^{2/q_j}_{L^{2 \frac{n+2}{n}}(D_j \times (0,T))} \big]  \leq  
\, \widetilde{C}_j  \, ,\\
(iv) \quad & Du \in L^{\infty}(0,T; L^{2q_j}(D_j,\R^{nN})) \text{ with
probability one.} 
\end{align*}
Here $(Y_{q_j})_{j \in J}$ is a sequence of random variables given by $Y_{q_j}(t,\omega) = {\rm exp}(- \int_0^t G_{q_j}(s,\omega) ds)$ for each $j \in J$, for a sequence of functions $(G_{q_j})_{j \in J}$ which are in $L^1(0,T)$ with probability one and which will be determined later, and $(D_j)_{j \in J}$ is a monotone decreasing sequence of open sets satisfying $D_c \subset D_{j} \subset D_{j-1} \subset \ldots \subset D_0 \subset D_{-1} = D$. 

We start by setting
\begin{equation*}
Y_1  :=  e^{- \frac{1}{2} \int_0^t c' G'(u,f) \ds} \,,
\end{equation*}
where $G'(u,f)$ was defined in~\eqref{higher2}. It is obvious from its definition
that $Y_1 \colon [0,T] \times \Omega \to (0,1]$ satisfies $P( \inf_{t \in [0,T]} Y_1 > 0)  =1$. We then observe from Lemma~\ref{Diff-quotients} that
\begin{multline*}
\quad \sup_{\kabs{h} < d} \, \E \, \Big[ \, \sup_{t \in (0,T)} \knorm{Y_1 \dq
u}^2_{L^2(D_0)} + \int_0^T \knorm{Y_1 D \dq u}_{L^2(D_0)}^2 \dt \Big] \\
	 \leq  c' \Big( \knorm{D_k u_0}_{L^2(D)}^2 + 1 
	+ \E \big[ \knorm{f_H^{\frac{a}{a-2}}}_{L^2(D_T)}^2 \big] \Big) =: C_0 \quad
\end{multline*}
is satisfied for every open set $D_0$ compactly supported in $D$. By definition
of the space $V^2$, this establishes the statement (i)$_0$. Furthermore,
(ii)$_0$ follows immediately from the Sobolev embedding~\eqref{embedding},
applied for $P$-almost every $\omega$ to the functions $Y_1 \, \dq u$, for $k
\in \{1,\ldots,n\}$. To conclude the first step of the iteration it only remains
to justify the statements (iii)$_0$ and (iv)$_0$. To this end we take advantage
of Theorem~\ref{thm-derivatives} twice, in the way as explained in Remark
\ref{rem-diff-quot} (and actually as already performed in Corollary~\ref{cor-second-der}). First we apply it with the choices $p=q=2q_0 \frac{n+2}{n}$
to the inequality from (ii)$_0$ (for all $k \in \{1,\ldots,n\}$), leading to the
existence of $Du$ in the Lebesgue space $L^{2 (n+2)/n}(D_0 \times (0,T),\R^{nN})$ with the required estimate for the average of $Y_1 Du$; secondly, we apply it with the choice
$p=2q_0$ and $q=\infty$ to (i)$_0$ --~more precisely to the first term in the
$V^2$-norm~-- and, keeping in mind the pathwise strict positivity of $Y_1$, we end up with the existence of $Du$ in $L^{\infty}(0,T;L^{2}(D_0,\R^{nN}))$ with probability one.

We now proceed to the inductive step. Assume for a given $j \in J$ that
(i)$_\ell$--(iv)$_\ell$ are valid on open sets
$D_{\ell} \subset D_{\ell-1}$ with random variables $Y_{q_\ell} \colon [0,T] \times \Omega \to (0,1]$ of the required form for all $\ell \in \{0,\ldots,j-1\}$. Then,
keeping in mind (iii)$_{j-1}$ and the definition of the number $q^*$, we note that the assumptions of Lemma
\ref{diff-quotients-iteration} are satisfied (for $p,D'$ replaced by
$q_{j-1},D_{j-1}$), and we hence deduce (with the admissible choice $q =
q_{j}$) the estimate
\begin{multline*}
\quad \sup_{\kabs{h} < d_j} \E \Big[ \Big( \sup_{t \in (0,T)} 
\knorm{Y_{q_j}^{q_j} \, W_{q_j}(\dq u)}^2_{L^2(D_j)} 
	+ \int_0^T \knorm{Y_{q_j}^{q_j} \, DW_{q_j}(\dq u)}_{L^2(D_j)}^2 \dt \Big)^{\frac{1}{q_j}} \Big] \\
	\leq  c \, \Big( \knorm{W_{q_{j-1}}(D_k u(x,0))}^{2}_{L^{2}(D)} + 1 + 
	\E \big[ \knorm{f_H(s) )}_{L^a(D_T)}^a \, \big]
	\Big)^{\frac{1}{q_j}}  =: C_j \quad
\end{multline*}
for every $k \in \{1,\ldots,n\}$, a domain $D_j \subset D_{j-1}$ satisfying $d_j
:= \dist (D_j , \partial D_{j-1}) > 0$ and a random variable $Y_{q_j}$ defined via $G_{q_j}$ given in Lemma~\ref{diff-quotients-iteration} and satisfying in particular $P( \inf_{t \in [0,T]} Y_{q_j} > 0)  =1$. This shows (i)$_j$, and (ii)$_j$ in turn
is an immediate consequence after the application of the Sobolev embedding as
above. Moreover, the statements (iii)$_j$ and (iv)$_j$ again follow from
(ii)$_j$ and (i)$_j$, respectively, after the application of Theorem
\ref{thm-derivatives} with the choices $p=q=2q_j \frac{n+2}{n}$ and $p=2q_j$, $q
= \infty$, respectively. This finishes the proof of the induction.

As an immediate consequence of the induction, we can now conclude the desired higher integrability result to a great power, via the following observation. 
Via (iv) we find in the limit 
\begin{equation*}
Du \in L^{\infty}(0,T; L^{2q^*}(D_c,\R^{nN}))
\end{equation*}
with probability one, and by definition the exponent $2q^*$ is greater than the space dimension $n$. Hence, assumption~\eqref{int_assumption_Hoelder} of Proposition~\ref{prop_Hoelder} is guaranteed. For its application we still need to check the integrability condition on $a(x,s), b(x,s)$ given by
\begin{equation*}
 a(x,s) := \diverg A(x,s,u,Du) \qquad \text{and} \qquad b(x,s) := H(x,s,Du) \,.  
\end{equation*}
Since $A(x,t,u,z)$ is differentiable in $x$, $u$, and $z$ with bounds~\eqref{GV-A}, we obtain $a \in L^2(D_c \times (0,T),\R^N)$ with probability one as a direct consequence of $Du \in V^2(D_c \times (0,T),\R^{nN})$ and $f \in L^a(D_T) \subset L^4(D_T)$. Furthermore, the growth of $H$ according to~\eqref{GV-H} with $f_H \in L^a(D_T \times \Omega)$ implies $b \in L^{2+\epsilon}(0,T;L^2(D_c,\R^{n'N})$ with probability one. Thus, Proposition~\ref{prop_Hoelder} yields the asserted H\"older continuity of $u$ with probability one and finishes the proof of the theorem. 
\end{proof}
\end{theorem}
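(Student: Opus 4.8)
The plan is to reduce the assertion to the pathwise H\"older criterion of Proposition~\ref{prop_Hoelder}, whose essential hypothesis~\eqref{int_assumption_Hoelder} requires $Du\in L^\infty(0,T;L^{n+\varepsilon}(D_c))$ with probability one for some $\varepsilon>0$. So the heart of the matter is to gain \emph{higher integrability of $Du$ up to an exponent strictly above the space dimension $n$}, valid on a set of full probability. First I would record the base of the induction: by Lemma~\ref{Diff-quotients} together with Corollary~\ref{cor-second-der} (and the finite-difference criterion of Theorem~\ref{thm-derivatives} applied with $(p,q)=(2,2)$ and $(p,q)=(2,\infty)$), one obtains $Du\in V^2_{\rm loc}(D_T)$ with probability one, with a controlled weighted average --- this is exactly statement $(i)_0$--$(iv)_0$ with $q_0=1$ and weight $Y_1=\exp(-\tfrac12\int_0^t c'G'(u,f)\,ds)$.

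The core is then a finite iteration. I would define $\tilde q_0=1$ and $\tilde q_{j+1}=\min\{q_j\tfrac{n+2}{n},\,1+q_j\tfrac{n+2}{n}\tfrac{a-4}{a},\,\tfrac{a-2}{2},\,q_j+1\}$, and set $q_j=\tilde q_j$ as long as the sequence is still strictly increasing \emph{and} the compatibility condition $L_H<L_H^*(2\tilde q_j)$ from~\eqref{restrict-q} is respected (here using that $L_H^*(s)$ is strictly decreasing in $s$ with $L_H^*(2/(1-\nu))=0$, and that $L_H^*(n)>L_H$ by hypothesis); once this fails I fix the final value $q_{|J|}=q^*$ for an arbitrary $q^*\in(\tfrac n2,\min\{(L_H^*)^{-1}(L_H),\tfrac{a-2}{2},q_{\max}(a,n)\})$, which is reachable in finitely many steps. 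The inductive step is an application of Lemma~\ref{diff-quotients-iteration} with $(p,D')$ replaced by $(q_{j-1},D_{j-1})$ and $q=q_j$: from the weighted bound on $W_{q_{j-1}}(Du)$ in $V^2$ one reads off (via the $V^{2,p}$-embedding~\eqref{embedding}) higher integrability of $Du$ and $u$, feeds it into the growth assumptions~\eqref{GV-A} on $D_uA$ and $D_xA$, and concludes a uniform (in $h$) bound on the weighted $V^2$-norm of $W_{q_j}(\triangle_{k,h}u)$; then Theorem~\ref{thm-derivatives}, in the form of Remark~\ref{rem-diff-quot}, upgrades this to $W_{q_j}(Du)\in V^2$ on a slightly smaller domain $D_j\Subset D_{j-1}$, still with probability one. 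Iterating $|J|<\infty$ times and passing to $j=|J|$ yields $Du\in L^\infty(0,T;L^{2q^*}(D_c,\R^{nN}))$ with $2q^*>n$, with probability one.

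Finally I would verify the remaining integrability hypotheses of Proposition~\ref{prop_Hoelder} for $a(x,s):=\diverg A(x,s,u,Du)$ and $b(x,s):=H(x,s,Du)$. Since $A$ is $C^1$ in $(x,u,z)$ with the bounds~\eqref{GV-A} and $Du\in V^2(D_c\times(0,T))$, together with $f\in L^a(D_T)\subset L^4(D_T)$, one gets $a\in L^2(D_c\times(0,T),\R^N)$ almost surely; and the at-most-linear growth of $H$ in~\eqref{GV-H}$_2$ with $f_H\in L^a$ gives $b\in L^{2+\varepsilon}(0,T;L^2(D_c,\R^{n'N}))$ almost surely. Proposition~\ref{prop_Hoelder} then delivers $u\in C^{0,\gamma}(D_c\times[0,T])$ with probability one, with $\gamma$ depending only on $n,\nu,a$.

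\medskip
\textbf{Where the difficulty lies.} The routine parts are the embeddings and the growth-condition bookkeeping. The delicate point is orchestrating the iteration so that at \emph{every} step the \emph{pair} of constraints in~\eqref{restrict-q} holds simultaneously for $q=q_j$: the integrability gain forces $q_j$ upward, while the right-hand side of the second inequality in~\eqref{restrict-q} (namely $L_H^*(2q)$) is decreasing in $q$, so multiplicative noise ($L_H>0$) caps the attainable exponent; one must check that the assumption $\nu>(n-2)/n$ with $L_H<L_H^*(n)$ is exactly what guarantees the cap lies above $n/2$, i.e. that one can still cross the threshold $2q^*>n$ before the compatibility condition breaks. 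Secondarily, one has to be careful that the sequence of weights $Y_{q_j}$ stays pathwise strictly positive (the exponents $G_{q_j}$ remain in $L^1(0,T)$ a.s.) and that the successively shrinking domains still contain $D_c$, which is arranged by choosing the $D_j$ in advance.
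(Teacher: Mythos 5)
Your proposal reproduces the paper's own argument in all essentials: the same base case from Lemma~\ref{Diff-quotients}, Corollary~\ref{cor-second-der} and Theorem~\ref{thm-derivatives}, the same finite iteration of Lemma~\ref{diff-quotients-iteration} along the sequence $\tilde q_j$ capped at $q^*\in(\tfrac n2,\min\{(L_H^*)^{-1}(L_H),\tfrac{a-2}{2},q_{\max}\})$, and the same conclusion via Proposition~\ref{prop_Hoelder} after checking the integrability of $\diverg A(x,s,u,Du)$ and $H(x,s,Du)$. It is correct and there is nothing substantive to add.
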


\section{Regularity of the average due to noise}
\label{sec_noise-regularization}

It has been recently proved that a Stratonovich bilinear multiplicative noise
may have a regularizing effect on certain classes of PDEs, see~\cite{FLALNM}
for a review, based on a number of works including~\cite{FLAGUBPRI10,FLAGUBPRI10a,ATTFLA11}. In most cases, \textit{uniqueness} by
noise is the topic of these works. The problem of the interaction between noise and
\textit{singularities} is more difficult and less explored. But two examples
are known: 
\begin{enumerate}
 \item[(i)] for linear transport equations of the form
\[
du = (b(x,t) \cdot Du)  \dt +\sigma Du\circ dB_t
\]
with $b\in C(0,T;C_{b}^{\alpha}(\mathbb{R}^{n},\mathbb{R}^{n}))$, where 
regular initial condition may develop discontinuities in finite time in the case $\sigma=0$ (think of the simple example in dimension $n=1$ given by 
$b(x)  = - {\rm sign}(x)  \sqrt{\kabs{x}}$), it is known that $C^{1}$-smoothness 
is preserved for $\sigma \neq 0$, see~\cite{FLAGUBPRI10,FLAGUBPRI11}, 
where similar results have been also proved for linear continuity equations;
 \item[(ii)] for the point vortex motion associated to the 2D Euler equations, it has
been proved that coalescence of vortices cannot happen when a suitable
Stratonovich bilinear multiplicative noise is added to the equations, see
\cite{FLAGUBPRI10a}. 
\end{enumerate}
One should also notice that other singularities, like those
arising in the inviscid Burgers equation, do not disappear under noise, see
\cite{FLALNM}, so each equation requires its own understanding and
investigation. Moreover, no general method exists to investigate these kind
of properties.

Our aim here is to give a simple partial result in this direction (namely the effect of
noise on singularities) for \textit{linear systems}. We consider the linear
stochastic system with Stratonovich bilinear multiplicative noise of the form
\begin{equation}
du  =\operatorname{div} \big( A(x,t)  Du \big) \dt + \sigma Du \circ dB_t  \,,
\qquad
u|_{t=0}=u_{0} \label{linear SPDE}
\end{equation}
with bounded measurable coefficient matrix $A$, where $B_t$ is
a Brownian motion in $\mathbb{R}^{n}$, defined on a filtered probability space
$\left(  \Omega,F_{t},P\right)  $. The space variable $x$ varies in a possibly
unbounded regular open domain $D\subset\mathbb{R}^{n}$. On $A$ we assume that
there exist $\lambda_{0},\lambda_{1}>0$ such that
\begin{equation}
\lambda_{0} \kabs{\xi}^{2} \leq \sp{A(x,t) \, \xi}{\xi} \qquad 
\text{and} \qquad \kabs{A(x,t) \, \xi} \leq \lambda_{1} \kabs{\xi}
\label{linear assumption}
\end{equation}
for all $\xi\in\mathbb{R}^{nN}$, a.\,e. $(x,t)  \in D \times [0,T]$. Actually, this is analogous to assumption~\eqref{GV-A}$_2$ (then $\nu$ corresponds to the ratio  $\frac{\lambda_0}{\lambda_1}$), rewritten for vector fields which are linear in the gradient variable. We further note that for now we do not assume any regularity with respect to $x$, but at the same time we do not allow any dependency on $\Omega$. Let us clarify the vector notation used in the stochastic part: $\sigma Du(x,t)  \circ dB_t  $ is
a vector with $N$ components, and
\[
(  \sigma Du(x,t)  \circ dB_t )
^{\alpha}=\sigma\sum_{i=1}^{n}D_i u^{\alpha}(x,t)
\circ dB^{i}_t \, .
\]

\begin{remark}
Let us recall that Stratonovich noise is the natural one for modelling: the so
called \textit{Wong-Zakai principle}, proved for several classes of SPDEs (see
for instance the appendix of~\cite{FLAGUBPRI10} for the linear transport
equation),
states that solutions $u_{n}\left(  x,t\right)  $ of deterministic equations
with smooth random coefficients $B_{n}(t)  $ of the form%
\[
\frac{\partial u_{n}}{\partial t}=\operatorname{div}
\big( A(x,t)  Du_{n}\big)  + \sigma Du_{n} \frac{dB_{n}(t)  }{dt}\, , 
\qquad u_n|_{t=0}=u_{0} 
\]
converge (in proper topologies and under proper assumptions on $B_{n}$, the
details depend on the problem and result) to solutions $u$ of the previous
SPDE with Stratonovich noise (not It\^{o} noise). We have stated the principle
for our system of parabolic equations just for sake of definiteness, but in
fact it has not been proved before in this generality. We do not want to give
a proof here, which would require a considerable work. We only quote this fact
by analogy with other equations, as a general motivation for the choice of
Stratonovich noise.
\end{remark}

Let us give the definition of weak solution to equation~\eqref{linear SPDE},
similarly to~\cite{FLAGUBPRI10}. To understand one of the requirements (the fact
that $s\mapsto\int_{D}u(x,s)  D\varphi(x)  \dx$ must have a modification which is a
continuous adapted semi-martingale), we recall a few facts about Stratonovich
stochastic integrals, taken for instance from~\cite{KUNITA84}. If $B_t$ is a $(\Omega,F_{t},P)$-Brownian motion in $\mathbb{R}^{n}$
and $X\left(  t\right)  $ is a continuous $F_{t}$-adapted semi-martingale, the
following uniform-in-time limit exists in probability%
\[
\int_{0}^{t}X(s)  \circ dB_s  =\lim_{n\rightarrow
\infty}\sum_{t_{i}\in\pi_{n},t_{i}\leq t}\frac{X(  t_{i+1}\wedge
t)  +X(t_{i})}{2} (  B_{t_{i+1}\wedge t}
-B_{t_{i}} )
\]
and is called Stratonovich integral of $X$ with respect to $B$. Here $\pi_{n}$
is a sequence of finite partitions of $[0,T]$ with size
$\kabs{\pi_{n}} \rightarrow0$ and elements $0=t_{0}<t_{1}<...$.
Under the same assumptions it is defined the joint quadratic variation between
$X$ and $B$:
\[
[X,B]_{t} = \lim_{n\rightarrow\infty} \sum_{t_{i}\in\pi_{n},t_{i}\leq t}(X(t_{i+1}\wedge t)  - X(t_{i})) \, (  B_{t_{i+1}\wedge t}  - B_{t_{i}}) \,,
\]
and they are related to the It\^{o} integral
\[
\int_{0}^{t} X(s)  \, dB_s  = \lim_{n\rightarrow\infty
}\sum_{t_{i}\in\pi_{n},t_{i}\leq t} X(t_{i}) \, (B_{t_{i+1}\wedge t} - B_{t_{i}})
\]
(which is defined under more general assumptions on $X$) by the formula
\[
\int_{0}^{t}X(s)  \circ dB_s  =\int_{0}^{t} X(s) \, dB_s  +\frac{1}{2} \, [X,B]_{t} \,.
\]

\begin{definition}
\label{definition solution stoch}
If $u_{0}\in L_{loc}^{2}\left(
D,\mathbb{R}^{N}\right)  $, we say that a random field $u(x,t)$
is a weak solution of equation~\eqref{linear SPDE} if:
\begin{enumerate}
\item[(i)] with probability one, we have $u \in V^2(B_T, \R^N)$
for all bounded open sets $B\subset D$, where $B_T = B \times (0,T)$,
\item[(ii)] for all $\varphi\in C_{0}^{\infty}(D,\mathbb{R}^{N})$, the
$\mathbb{R}^{n}$-valued process $s\mapsto\int_{D}u(x,s)
D\varphi(x) \dx$ has a modification which is a continuous adapted
semi-martingale, and for all $t \in [0,T]$, we have $P$-a.\,s.
\begin{multline*}
\int_{D} u(x,t) \, \varphi(x)  \dx + \int_{0}^{t} \int
_{D} A(x,s)  Du(x,s)  D\varphi(x)  \dx \ds 
+ \sigma \int_{0}^{t} \Big(  \int_{D}u(x,s)  D\varphi(x) \dx \Big) 
\circ dB_s \\
=\int_{D}u_{0}(x) \varphi(x)  \dx \,.
\end{multline*}
\end{enumerate}
\end{definition}

A posteriori, from the equation itself, it follows that for all $\varphi\in
C_{0}^{\infty}\left(  D,\mathbb{R}^{N}\right)  $ the real-valued process
$s \mapsto\int_{D}u(x,s)  \varphi(x) \dx$ has a continuous modification. We shall always use it. Notice further that we give the following meaning to the vector notation above:
\[
\int_{0}^{t} \Big(  \int_{D}u(x,s) \, D\varphi(x)
\dx \Big)  \circ dB_s  =\sum_{i=1}^{n}\sum_{\alpha=1}^{N}
\int_{0}^{t} \Big(  \int_{D}u^{\alpha}(x,s) \, D_i \varphi^{\alpha}(x) \dx \Big)  \circ dB^{i}_s \, .
\]

\begin{proposition}
\label{proposition Stratonovich Ito}
A weak solution in the previous
Stratonovich sense satisfies the It\^{o} equation
\begin{multline*}
\quad  \int_{D}u(x,t) \, \varphi(x)  \dx+\int_{0}^{t}
\int_{D}A(x,s)  Du(x,s)  D\varphi(x)
\dx \ds+\sigma\int_{0}^{t} \Big(  \int_{D}u(x,s)  D\varphi(
x)  \dx \Big) \, dB_s  \\
 =\int_{D}u_{0}(x)  \varphi(x)  \dx+\frac
{\sigma^{2}}{2}\int_{0}^{t}\int_{D}u(x,s) \Delta\varphi(x)  \dx \ds \quad
\end{multline*}
for all $\varphi\in C_{0}^{\infty}\left(  D,\mathbb{R}^{N}\right)  $. The
converse is also true. With a language similar to that of 
Definition~\ref{definition solution stoch}, we could say that $u$ is a weak solution of
the It\^{o} equation
\begin{equation}
du  =\operatorname{div} \Big( \Big( A(x,t)
+\frac{\sigma^{2}}{2} \Big)  Du \Big)  \dt+\sigma Du \, dB_t  \,,
\qquad u|_{t=0}=u_{0} \, .\label{Ito SPDE}
\end{equation}
\end{proposition}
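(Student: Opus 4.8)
\emph{Proof sketch.} The plan is to fix a single test function, unwind the vector notation for the noise, and reduce everything to the elementary relation between Stratonovich and Itô integrals recalled above, $\int_0^t X(s)\circ dB_s=\int_0^t X(s)\,dB_s+\tfrac12[X,B]_t$; the only substantial point is the identification of the joint quadratic variation. So fix $\varphi\in C_0^\infty(D,\R^N)$. Writing $X^i(s):=\int_D u(x,s)\cdot D_i\varphi(x)\dx$ for $i\in\{1,\ldots,n\}$, the Stratonovich term in Definition~\ref{definition solution stoch} is $\sigma\int_0^t\big(\int_D u(x,s)D\varphi(x)\dx\big)\circ dB_s=\sigma\sum_{i=1}^n\int_0^t X^i(s)\circ dB^i_s$. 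By hypothesis~(ii) of that definition, applied with the admissible test function $D_i\varphi\in C_0^\infty(D,\R^N)$ in place of $\varphi$, the process $s\mapsto\int_D u(x,s)\,D(D_i\varphi)(x)\dx$ is a continuous adapted semimartingale, and the weak formulation then gives a semimartingale decomposition of $X^i$: its bounded-variation part consists of $X^i(0)$, the Lebesgue integral $-\int_0^{\cdot}\int_D A\,Du\cdot D(D_i\varphi)\dx\,ds$ and the Itô--Stratonovich correction, while its local-martingale part is precisely the Itô integral $-\sigma\sum_{j=1}^n\int_0^{\cdot}\big(\int_D u(x,s)\cdot D_jD_i\varphi(x)\dx\big)\,dB^j_s$. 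All integrands here lie in the appropriate spaces because $u\in V^2(B_T,\R^N)$ for bounded $B\subset D$, $A$ is bounded, and $\varphi$ is smooth and compactly supported.

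Since $[X^i,B^i]$ depends only on the local-martingale part of $X^i$ and $[B^j,B^i]_t=\delta_{ij}t$, I obtain
\[
[X^i,B^i]_t \;=\; -\,\sigma\int_0^t\int_D u(x,s)\cdot D_iD_i\varphi(x)\dx\,ds \,.
\]
Summing over $i$ and using $\sum_i D_iD_i\varphi=\Delta\varphi$, the correction terms add up to $\sum_i\tfrac12[X^i,B^i]_t=-\tfrac{\sigma}{2}\int_0^t\int_D u(x,s)\,\Delta\varphi(x)\dx\,ds$, hence
\[
\sigma\sum_{i=1}^n\int_0^t X^i(s)\circ dB^i_s \;=\; \sigma\int_0^t\Big(\int_D u(x,s)\,D\varphi(x)\dx\Big)\,dB_s \,-\, \frac{\sigma^2}{2}\int_0^t\int_D u(x,s)\,\Delta\varphi(x)\dx\,ds \,.
\]
Substituting this into the Stratonovich weak formulation and rearranging yields exactly the asserted Itô identity. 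For the converse I would run the same argument in reverse: starting from the Itô equation, apply it with $D_i\varphi$ in place of $\varphi$ to see that $s\mapsto\int_D u(x,s)\,D\varphi(x)\dx$ has a continuous adapted semimartingale modification (so hypothesis~(ii) of Definition~\ref{definition solution stoch} holds and the Stratonovich integral is defined), observe that its local-martingale part is again the above Itô integral (the extra Laplacian term being of bounded variation), and invoke the same Itô--Stratonovich conversion to recover the Stratonovich equation. Thus the two notions of weak solution coincide.

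The steps that actually require care are the organisational ones rather than the bracket computation: correctly unwinding the $\R^n$-valued vector notation for the noise so that the partial derivatives $D_iD_i\varphi$ really appear (and summing to $\Delta\varphi$), checking that replacing $\varphi$ by $D_i\varphi$ is legitimate (it is, as the definition quantifies over all $C_0^\infty$ test functions), and verifying that under the sole assumptions $u\in V^2_{\mathrm{loc}}$ and $A\in L^\infty$ every deterministic and stochastic integral occurring is well defined, so that the continuous-semimartingale hypothesis determines the local-martingale part of $X^i$ unambiguously; for the last point I would rely on the properties of Stratonovich integrals and joint quadratic variations quoted from~\cite{KUNITA84}. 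I expect the main obstacle to be purely this bookkeeping of indices and the clean identification of the martingale part, since no genuinely new estimate beyond what is already available is needed.
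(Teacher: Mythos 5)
Your argument is correct and follows essentially the same route as the paper: convert via $\int_0^t X\circ dB = \int_0^t X\,dB + \tfrac12[X,B]_t$, then identify the bracket $[X^i,B^i]$ by testing the weak formulation with $D_i\varphi$ and using $[B^j,B^i]_t=\delta_{ij}t$, summing $D_iD_i\varphi$ to $\Delta\varphi$, with the converse obtained by running the computation backwards. The only cosmetic difference is that you first rewrite the martingale part in It\^o form before taking the bracket, whereas the paper computes the joint quadratic variation directly from the Stratonovich-form decomposition; the content is identical.
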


\begin{proof}
From the facts recalled above about Stratonovich integrals we have
\begin{align*}
\int_{0}^{t} \Big(  \int_{D}u^{\alpha}(x,s) \, D_{i}
\varphi^{\alpha}(x)  \dx \Big)  \circ dB^{i}_s
&  =\int_{0}^{t} \Big(  \int_{D}u^{\alpha}(x,s) \, D_{i}
\varphi^{\alpha}(x)  \dx \Big) \, dB^{i}_s  \\
& \quad {} +\frac{1}{2} \Big[  \int_{D}u^{\alpha}(x,\cdot)
\, D_{i} \varphi^{\alpha}(x)  \dx,B^{i} \Big]_{t}.
\end{align*}
Hence, we get
\begin{multline*}
\quad \int_{D}u(x,t) \, \varphi(x)  \dx+\int_{0}^{t}
\int_{D}A(x,s) \, Du(x,s) \, D\varphi(x)
\dx \ds + \sigma \int_{0}^{t} \Big(  \int_{D} u(x,s) \, D\varphi(
x)  \dx \Big) \, dB_s  \\
  = \int_{D} u_{0}(x) \, \varphi(x)  \dx-\frac{\sigma
}{2}\sum_{i=1}^{n}\sum_{\alpha=1}^{N} \Big[  \int_{D}u^{\alpha}(
\cdot) \, D_i \varphi^{\alpha}(x)
 \dx,B^{i} \Big]_{t} \,. \quad
\end{multline*}
By the equation in Definition~\ref{definition solution stoch} we also have
\begin{multline*}
\quad \int_{D}u(x,t) \, D_{i}\varphi(x)
\dx+\int_{0}^{t}\int_{D} A(x,s) \, Du(x,s)
\, D D_{i}\varphi(x) \dx \ds \\
 =\int_{D}u_{0}(x) \, D_{i}\varphi(x)
\dx- \sigma \int_{0}^{t} \Big(  \int_{D}u(x,s) \, D D_{i}
\varphi(x)  \dx \Big)  \circ dB_s  \,. \quad
\end{multline*}
Moreover, recall that
\begin{equation*}
\int_{0}^{t} \Big(  \int_{D}u(x,s) \, D D_{i}
\varphi(x)  \dx \Big)  \circ dB_s  =\sum_{j=1}
^{n}\int_{0}^{t} \Big(  \int_{D} u(x,s) \, D_{j}
D_{i}\varphi(x) \dx \Big) \circ dB^{j}_s \,.
\end{equation*}
Thus, by the classical rules about quadratic variation, see~\cite{KUNITA84},
we have
\begin{align*}
\sum_{\alpha=1}^{N} \Big[  \int_{D}u^{\alpha}(x,\cdot) \,
D_{i}\varphi^{\alpha}(x)  \dx,B^{i} \Big]_{t}
  & = \Big[  \int_{D}u(x,\cdot) \, D_{i}\varphi(
x)  \dx,B^{i} \Big]_{t} \\
  & =-\sigma\int_{0}^{t} \Big(  \int_{D} u(
x,s) \, D_{i} D_{i}\varphi(x) \dx \Big)  \ds.
\end{align*}
The proof that the Stratonovich equation yields the It\^{o} one is complete,
and the proof of the converse statement is the same (recall that the existence of the continuous modification in (ii) of Definition~\ref{definition solution stoch} follows immediately from the equation in Definition~\ref{def_weak_solution_ito}).
\end{proof}

The degree of parabolicity of the It\^{o} SPDE~\eqref{Ito SPDE} is the same as
the one of~\eqref{linear SPDE}, it is given just by the properties of
$A(x,t)$. The term $\frac{\sigma^{2}}{2}\Delta u(x,t) dt$ is fully compensated by the It\^{o} term $\sigma Du(x,t)  dB_t $ and does not contribute to any additional
parabolicity. This is a well recognized phenomenon in the theory of SPDEs, see
for instance~\cite{KRYROZ79}. A simple way to see this fact is to
consider the case $A \equiv 0$.

\begin{proposition}
Consider the equation
\begin{equation}
\label{example_A_0_strat}
du=Du\circ dB_{t},\qquad u|_{t=0}=u_{0}
\end{equation}
in the full space $D=\mathbb{R}^{n}$, where $B$ is an $n$-dimensional Brownian
motion and $u:D \times [0,T] \times \Omega \rightarrow \mathbb{R}^{N}$. 
This is equivalent (when formulated in a weak sense) to the equation
\begin{equation}
du=\frac{1}{2}\Delta u \dt+Du \, dB_{t},\qquad u|_{t=0}=u_{0}.\label{example Ito}%
\end{equation}
Assume $u_{0}\in L^{2}(D,\mathbb{R}^{N})$. Then
\[
u(x,t)  =u_{0}(x+B_{t})
\]
is a weak solution, in the sense that 
\begin{enumerate}
\item[(i')] with probability one, we have $\int_{0}^{T}\int_{\mathbb{R}^{n}}\left\vert
u(x,t)  \right\vert ^{2}dx \dt<\infty$,
\item[(ii')] condition (ii) of Definition~\ref{definition solution stoch} hold true.  
\end{enumerate}

\begin{proof}
Condition (i') comes from
\[
\int_{0}^{T}\int_{\mathbb{R}^{n}} \kabs{u(x,t)}^{2} \dx \dt =\int_{0}^{T}\int_{\mathbb{R}^{n}} \kabs{ u_{0}(x+B_{t})}^{2} \dx \dt =\int_{0}^{T}\int_{\mathbb{R}^{n}} \kabs{ u_{0}(x)}^{2} \dx \dt < \infty \, .
\]
Condition (ii) of Definition~\ref{definition solution stoch} is due to the following argument. For every $\psi\in
C_{0}^{\infty}(\mathbb{R}^{n},\mathbb{R}^{N})$ we have
\[
\int_{\mathbb{R}^{n}}u(x,t)  \psi(x)  \dx = \int
_{\mathbb{R}^{n}}u_{0}(x+B_{t})  \psi(x)
\dx =\int_{\mathbb{R}^{n}}u_{0}(x) \psi(x-B_{t}) \dx
\]
and $\psi(x-B_{t})$ is the semi-martingale
\[
\psi (x-B_{t})  = \psi(x)  - \int_{0}^{t}D\psi(x-B_{s}) \, dB_{s} +\frac{1}{2}\int_{0}^{t}\Delta\psi(x-B_{s})  \ds \,.
\]
Consequently, we obtain
\begin{align*}
\int_{\mathbb{R}^{n}}u (x,t)  \psi(x) \dx  &
=\int_{\mathbb{R}^{n}}u_{0}(x)  \psi(x)  \dx - \int
_{0}^{t} \Big(  \int_{\mathbb{R}^{n}}u_{0}(x)  D\psi(x-B_{s})  \dx \Big) \,  dB_{s}\\
& +\frac{1}{2}\int_{0}^{t} \Big(  \int_{\mathbb{R}^{n}}u_{0}(x)
\Delta\psi(x-B_{s})  \dx \Big) \ds \,,
\end{align*}
which shows that the stochastic process $s\mapsto\int_{\mathbb{R}^{n}}u(x,s)
\psi(x) \dx$ has a modification which is a continuous adapted semi-martingale. In addition, this computation may also be used to prove the equivalence with the It\^{o} formulation~\eqref{example Ito}. Finally,
\begin{multline*}
\quad \int_{\mathbb{R}^{n}}u(x,t)  \varphi(x)
dx-\int_{\mathbb{R}^{n}}u_{0}(x)  \varphi(x)
dx+\int_{0}^{t}\Big(  \int_{\mathbb{R}^{n}}u(x,s) D\varphi(x) \dx \Big) \circ dB_{s}\\
 =\int_{\mathbb{R}^{n}}u_{0}(x)  \varphi(x-B_{t}) \dx- 
\int_{\mathbb{R}^{n}}u_{0}(x)  \varphi(x) \dx
+\int_{0}^{t} \Big(  \int_{\mathbb{R}^{n}} u_{0}(x) D\varphi(x-B_{s})  \dx \Big) \circ dB_{s} \,, \quad
\end{multline*}
and this is equal to zero because
\[
\varphi(x-B_{t})  = \varphi(x)  -\int_{0}^{t} D\varphi(x-B_{s}) \circ dB_{s} \, .
\]
Thus, also condition (iii) is satisfied, and the proof is complete.
\end{proof}
\end{proposition}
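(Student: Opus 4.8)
The plan is to verify conditions (i') and (ii') directly from the closed-form expression $u(x,t)=u_0(x+B_t)$, using translation invariance of Lebesgue measure together with an application of It\^o's formula to the (smooth, compactly supported) test function evaluated along the Brownian path. Condition (i') is immediate: for each fixed $t$ the substitution $y=x+B_t$ gives $\int_{\R^n}\kabs{u(x,t)}^2\dx=\knorm{u_0}_{L^2}^2$, hence $\int_0^T\int_{\R^n}\kabs{u(x,t)}^2\dx\dt=T\,\knorm{u_0}_{L^2}^2<\infty$ surely.

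For (ii'), I would fix $\varphi\in C_0^\infty(\R^n,\R^N)$ and write, for any smooth compactly supported $\psi$, $\int_{\R^n}u(x,s)\,\psi(x)\dx=\int_{\R^n}u_0(y)\,\psi(y-B_s)\dy$. Applying the It\^o formula (Theorem~\ref{Ito-I}) to $s\mapsto\psi(y-B_s)$ at fixed $y$ yields
\[
\psi(y-B_t)=\psi(y)-\int_0^t D\psi(y-B_s)\cdot dB_s+\tfrac12\int_0^t\Delta\psi(y-B_s)\ds ,
\]
and I would then integrate this identity against $u_0(y)\dy$. The one non-routine point is the interchange of the $dy$-integral with the It\^o integral, which is granted by the stochastic Fubini theorem since $E\big[\int_0^T\int_{\R^n}\kabs{u_0(y)}^2\kabs{D\psi(y-B_s)}^2\dy\ds\big]\le T\,\knorm{D\psi}_{L^\infty}^2\,\knorm{u_0}_{L^2}^2<\infty$. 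Undoing the translation $y=x+B_s$ in the resulting integrals (legitimate, since for each $s$ the integrands are compactly supported in $y$ and $u_0\in L^2$) gives, with $\psi=\varphi$,
\[
\int_{\R^n}u(x,t)\varphi(x)\dx=\int_{\R^n}u_0(x)\varphi(x)\dx-\int_0^t\Big(\int_{\R^n}u(x,s)D\varphi(x)\dx\Big)dB_s+\tfrac12\int_0^t\Big(\int_{\R^n}u(x,s)\Delta\varphi(x)\dx\Big)\ds ,
\]
together with its analogue in which $\varphi$ is replaced by $D_i\varphi$. In particular $s\mapsto\int_{\R^n}u(x,s)D\varphi(x)\dx$ has a continuous adapted semi-martingale modification, and the displayed identity is precisely the weak It\^o formulation~\eqref{Ito SPDE} with $A\equiv0$.

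To return to the Stratonovich form, set $X^i(s):=\int_{\R^n}u(x,s)D_i\varphi(x)\dx$. The identity above with $D_i\varphi$ in place of $\varphi$ identifies the $dB^j$-integrand of the martingale part of $X^i$ as $-\int_{\R^n}u(x,s)D_jD_i\varphi(x)\dx$; hence $[X^i,B^i]_t=-\int_0^t\big(\int_{\R^n}u(x,s)D_iD_i\varphi(x)\dx\big)\ds$ and, summing over $i$, $\sum_i[X^i,B^i]_t=-\int_0^t\big(\int_{\R^n}u(x,s)\Delta\varphi(x)\dx\big)\ds$. Substituting $\int_0^tX\,dB_s=\int_0^tX\circ dB_s-\tfrac12[X,B]_t$ into the displayed It\^o equation cancels the two It\^o-correction terms and produces exactly condition (ii) of Definition~\ref{definition solution stoch}; since every step of this chain is reversible, the same computation also establishes the asserted equivalence of the two weak formulations. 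The main obstacle is the stochastic-Fubini bookkeeping — verifying the integrability estimate above and that the resulting object is genuinely a continuous semi-martingale rather than merely a modification — together with keeping the vector indices consistent in the quadratic-variation step; with $A\equiv0$ there is no analytic difficulty beyond this.
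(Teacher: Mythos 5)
Your proof is correct, and its first half (translation invariance for (i'), It\^o's formula applied to $s\mapsto\psi(y-B_s)$ and integration against $u_0$ to obtain the semimartingale property) coincides with the paper's argument. Where you diverge is in how condition (ii) of Definition~\ref{definition solution stoch} is verified: you first establish the weak It\^o identity (i.e.\ \eqref{example Ito} tested against $\varphi$), then read off the martingale part of $X^i(s)=\int u\,D_i\varphi\,\dx$ from the same identity with $D_i\varphi$ in place of $\varphi$, compute $\sum_i[X^i,B^i]_t=-\int_0^t\big(\int u\,\Delta\varphi\,\dx\big)\ds$, and use the It\^o--Stratonovich correction to cancel the $\tfrac12\Delta$ term --- in effect you rerun the converse direction of Proposition~\ref{proposition Stratonovich Ito} in this special case. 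The paper avoids the quadratic-variation computation altogether: since $\int u(x,s)D\varphi(x)\dx=\int u_0(x)D\varphi(x-B_s)\dx$ pathwise, the Stratonovich integral in (ii) can be rewritten with the latter integrand, and the identity then follows directly from the exact (correction-free) Stratonovich chain rule $\varphi(x-B_t)=\varphi(x)-\int_0^t D\varphi(x-B_s)\circ dB_s$. Your route is more computational but has the advantage of making the It\^o formulation \eqref{example Ito} and the equivalence of the two formulations completely explicit; the paper's route is shorter given the Stratonovich calculus facts it recalls beforehand. A further point in your favour is that you flag the interchange of the $dy$-integral with the stochastic integral, which the paper performs silently; only note that the hypothesis you quote (finiteness of $E\int_0^T\int\kabs{u_0(y)}^2\kabs{D\psi(y-B_s)}^2\dy\ds$) is not the standard stochastic-Fubini condition --- what one should check is $\int_{\R^n}\big(E\int_0^T\kabs{u_0(y)}^2\kabs{D\psi(y-B_s)}^2\ds\big)^{1/2}\dy<\infty$, which does hold here by Cauchy--Schwarz, since the inner quantity is $\kabs{u_0(y)}\,h(y)$ with $\knorm{h}_{L^2(\R^n)}^2=T\knorm{D\psi}_{L^2}^2$. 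With that adjustment your argument is complete.
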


\begin{remark}
\label{rem_example_irregular}
The previous proposition shows that the It\^{o} equation~\eqref{example Ito}
has no regularizing properties, in spite of the presence of the term $\frac
{1}{2}\Delta u$ (it is fully compensated by the It\^{o} term). In particular,
if $u_{0}=1_{x_{1}>0}$, the solution $u(x,t)  =1_{B_{t}^{1} < x_{1}}$ 
is discontinuous in $x$ for every given $(t,\omega)$. At the same time
\[
E[u(x,t)]  =E[u_{0}(x+B_{t})]  =P(B_{t}^{1}<x_{1})
\]
is smooth. This means, there are easy examples of a weak solution which have a smooth average, but which are irregular with probability one. Thus, smoothness of $E[u(x,t)]$ does not imply smoothness of $u(x,t)$, and so in general the regularity of $E[u(x,t)]$ is not enough to hope for regularity of $u(x,t)$ itself. However, it is important to observe that this example started from an \emph{irregular} initial data, and that this singularity was preserved in time. Obviously, the same reasoning applies to see that in fact every solution to~\eqref{example_A_0_strat} is H\"older continuous in $D_T$ if $u_0$ is additionally assumed to be H\"older continuous, so in particular in the case $u_0 \in W^{1,q}(D,\R^N)$ for some $q > n$ (which was always required for the regularity statements before).
\end{remark}

Now let us come back to weak solutions to the general linear system~\eqref{linear SPDE} with Stratonovich noise. The crucial observation is that the average of $u$ solves an equation with improved parabolicity. Let us first recall the classical definition used also
before in this paper. If $v_{0}\in L_{loc}^{2}(D,\mathbb{R}^{N})
$, we say that a (deterministic) function $v(x,t)$ is a weak
solution of the parabolic equation
\begin{equation}
\frac{\partial v}{\partial t} = \operatorname{div} \Big(
\Big( A(x,t)  + \frac{\sigma^{2}}{2} \Big)  Dv \Big) \,,
\qquad v|_{t=0}=v_{0}  \label{parabolic determ}%
\end{equation}
if $v \in V^2_{\rm{loc}}(D_T,\R^N)$
(in the sense of (i) of Definition~\ref{definition solution stoch} above)
and
\begin{multline*}
\quad  \int_{D}v(x,t)  \varphi(x)  \dx+\int_{0}^{t}
\int_{D}A(x,s)  Dv(x,s)  D\varphi(x)
\dx \ds \\
  =\int_{D}v_{0}(x)  \varphi(x)  \dx - \frac
{\sigma^{2}}{2}\int_{0}^{t}\int_{D}Dv(x,s)  D\varphi(x) \dx \ds \quad
\end{multline*}
for all $\varphi\in C_{0}^{\infty}(D,\mathbb{R}^{N})$.

\begin{proposition}
\label{stoch to det}\bigskip If $u$ is a weak solution of equation
\eqref{linear SPDE}, then
\[
v(x,t)  :=E[u(x,t)]
\]
is a weak solution of the parabolic equation~\eqref{parabolic determ}.
\end{proposition}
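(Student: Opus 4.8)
The plan is to reduce the Stratonovich equation to the equivalent It\^o equation via Proposition~\ref{proposition Stratonovich Ito}, and then to \emph{average}: take the expectation of the weak It\^o identity, check that the It\^o integral has vanishing mean, and interchange the expectation with the spatial and temporal integrals. Concretely, by Proposition~\ref{proposition Stratonovich Ito} the weak solution $u$ satisfies the weak It\^o equation~\eqref{Ito SPDE}, i.e. for every $\varphi\in C_0^\infty(D,\R^N)$ and every $t\in[0,T]$, $P$-a.s.,
\begin{equation*}
\int_D u(x,t)\varphi(x)\dx + \int_0^t\!\!\int_D A(x,s)Du(x,s)D\varphi(x)\dx\ds + \sigma\int_0^t\!\Big(\int_D u(x,s)D\varphi(x)\dx\Big)dB_s = \int_D u_0\varphi\dx + \frac{\sigma^2}{2}\int_0^t\!\!\int_D u(x,s)\Delta\varphi(x)\dx\ds .
\end{equation*}
The only non-trivial preparatory step is to ensure that $v(x,t):=E[u(x,t)]$ is well defined and lies in $V^2_{\rm loc}(D_T,\R^N)$.

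To this end I would first establish the a~priori bound $E\big[\knorm{u}_{V^2(B_T,\R^N)}^2\big]<\infty$ for every bounded open $B\Subset D$, following the scheme of Steps~1--3 of the proof of Lemma~\ref{lemma_apriori}: choose a cut-off $\eta\in C^\infty(D,[0,1])$ with $\eta\equiv1$ on $B$, apply the It\^o formula in the Gelfand triple $W_0^{1,2}\subset L^2\subset W^{-1,2}$ (Theorem~\ref{Ito-II}, part~(iii)) to $\knorm{u\,\eta}_{L^2}^2$, and use only the ellipticity and boundedness~\eqref{linear assumption} of $A$. The key cancellation, already recorded in the remark following Proposition~\ref{proposition Stratonovich Ito}, is that the It\^o correction $\sigma^2\knorm{Du\,\eta}_{L^2}^2$ produced by the noise exactly annihilates the additional coercivity $\sigma^2\knorm{Du\,\eta}_{L^2}^2$ coming from the modified coefficient $A+\tfrac{\sigma^2}{2}$, so the resulting differential inequality is genuinely parabolic with coercivity governed by $\lambda_0$ alone. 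Since the system is \emph{linear}, no exponential weights as in Lemma~\ref{lemma_apriori} are needed here: the terms carrying $D\eta$ are absorbed by Young's inequality into $\epsilon\knorm{Du\,\eta}_{L^2}^2+C\knorm{u}_{L^2(\supp\eta)}^2$, a Caccioppoli iteration along a finite chain $B\Subset B_1\Subset\cdots\Subset D$ controls the enlargement of the support, and a Gronwall argument combined with the usual stopping-time localization of the martingale term (exactly as in Step~3 of Lemma~\ref{lemma_apriori}) and the Burkholder--Davis--Gundy inequality yield $E\big[\sup_{t}\knorm{u(t)}_{L^2(B)}^2+\int_0^T\knorm{Du(t)}_{L^2(B)}^2\dt\big]\le C\knorm{u_0}_{L^2(B_1)}^2$. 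Consequently $v(\cdot,t)=E[u(\cdot,t)]$ is a well-defined Bochner integral in $W^{1,2}(B)$ for each $t$, $v\in V^2(B_T,\R^N)$, and $Dv=E[Du]$ by interchanging differentiation with the Bochner integral.

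With this bound, the integrand $s\mapsto\int_D u(x,s)D\varphi(x)\dx$ belongs to $L^2(\Omega\times(0,T))$, hence the It\^o integral above is a true martingale of zero expectation. Taking expectations in the displayed identity, using Fubini (justified by the integrability just obtained) to move $E$ inside $\int_D\cdot\dx$ and $\int_0^t\cdot\ds$, and recalling that $u_0$ is deterministic so $v_0=u_0$, one arrives at
\begin{equation*}
\int_D v(x,t)\varphi(x)\dx + \int_0^t\!\!\int_D A(x,s)Dv(x,s)D\varphi(x)\dx\ds = \int_D u_0\varphi\dx + \frac{\sigma^2}{2}\int_0^t\!\!\int_D v(x,s)\Delta\varphi(x)\dx\ds .
\end{equation*}
Since $\varphi\in C_0^\infty(D,\R^N)$ and $v(\cdot,s)\in W^{1,2}_{\rm loc}(D,\R^N)$, integrating by parts in the last integral ($\int_D v\,\Delta\varphi\dx=-\int_D Dv\cdot D\varphi\dx$) turns this into precisely the weak formulation of~\eqref{parabolic determ}, which is the assertion. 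The main obstacle is the a~priori estimate of the previous paragraph: one must be careful to carry it out \emph{in expectation} --- which is precisely where linearity is used in an essential way --- and to handle the martingale term by the same stopping-time plus Fatou/monotone-convergence procedure as in Lemma~\ref{lemma_apriori}; the averaging itself and the integration by parts are then routine.
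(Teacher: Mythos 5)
Your argument is essentially the paper's own proof: pass to the It\^o form via Proposition~\ref{proposition Stratonovich Ito}, establish the local second-moment bound \eqref{bounds on u} by the cut-off/It\^o-formula/stopping-time machinery of Lemma~\ref{lemma_apriori} (simplified by linearity, with the sup-in-time part handled as in Step~3b of Lemma~\ref{Diff-quotients}), deduce that the stochastic integral is a centered martingale, identify $Dv=E[Du]$ by interchanging expectation with the weak-derivative pairing, and take expectations in the weak It\^o identity. The only difference is cosmetic: you spell out the cancellation of the It\^o correction against the $\tfrac{\sigma^2}{2}$-term and the final integration by parts $\int_D v\,\Delta\varphi = -\int_D Dv\cdot D\varphi$, which the paper leaves implicit, and your sketch of the a~priori bound is given at the same level of detail as the paper's Step~1.
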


\begin{proof}
\textbf{Step~1}. We first observe that for these linear systems, we have the a~priori boundedness of the solution $u$ in the sense that 
\begin{equation}
\sup_{t\in [0,T]  } E \Big[  \int_{B} \kabs{ u(x,t)}^{2} \dx \Big]  
+E \Big[  \int_{0}^{T} \int_{B} \kabs{ Du (x,t) }^{2} \dx \dt \Big] < \infty
\label{bounds on u}
\end{equation}
for all bounded sets $B \subset D$. The proof of this property follows the line of arguments of the proof of Lemma~\ref{lemma_apriori} (but is in fact much easier). We do not want to go into details, but only mention the peculiarities. First, by the linear structure in~\eqref{linear SPDE} the function $G_0$ appearing in Lemma~\ref{lemma_apriori} can be chosen constant. This explains, why the estimate~\eqref{bounds on u} doesn't involve weights as before. Furthermore, in Lemma~\ref{lemma_apriori} we were content with a bound for the expected value of the spatial derivatives of $u$ only. However, adjusting the arguments from Step 3b in the proof of Lemma~\ref{Diff-quotients}, we obtain a bound for the average for the full $V^2$-norm for every bounded set $B$ compactly supported in $D$. This immediately gives~\eqref{bounds on u}.

\textbf{Step~2}. The regularity property $v\in L^{\infty} (
0,T;L_{loc}^{2} (  D,\mathbb{R}^{N} ) )$ is a direct
consequence of the first condition in~\eqref{bounds on u} from Step~1. In order to prove that also $v \in L^{2} (0,T;W^{1,2}_{loc} (D,\mathbb{R}^{N} ) )$ holds true, we first 
observe that we have (a.\,s. in $t$)
\[
E \Big[  \int_{D} D_{i}u^{\alpha} (x,s) \, \psi(
x)  \dx \Big]  =-E \Big[  \int_{D}u^{\alpha}(x,s)
\, D_{i}\psi(x)  \dx \Big]
\]
for all $\psi\in C_{0}^{\infty}(D,\mathbb{R})$. This implies
(by the integrability derived in~\eqref{bounds on u})
\[
\int_{D}E \big[ D_{i}u^{\alpha}(x,s) \big]
\, \psi (x)  \dx=-\int_{D}E \big[ u^{\alpha}(x,s)
\big] \, D_{i} \psi(x)  \dx \,,
\]
which in turn gives us that $E[  u^{\alpha} (x,s)]$ is
weakly differentiable in $x$ with partial derivative equal to $E [D_{i} u^{\alpha
}( x,s )]$. Thus, $v$ is weakly differentiable in $x$ and
\begin{align*}
\int_{0}^{T}\int_{B} \kabs{ Dv (x,t) }^{2} \dx \dt  &
=\int_{0}^{T}\int_{B} \kabs{ DE [  u (x,t)] }^{2} \dx \dt
=\int_{0}^{T}\int_{B} \kabs{ E [  Du(x,t) ] }^{2} \dx \dt\\
& \leq \int_{0}^{T} \int_{B} E\big[  \kabs{ Du (  x,t ) }^{2} \big]  \dx \dt
=E \Big[  \int_{0}^{T}\int_{B} \kabs{ Du (x,t) }^{2} \dx \dt \Big]  < \infty
\end{align*}
for all bounded $B\subset D$. The regularity properties of $v$ have been
checked. 

\textbf{Step~3}. The property $E[\int_{0}^{T}\int_{B} \kabs{ u(
x,t)}^{2} \dx \dt]<\infty$ implies that the It\^{o} integral in
the equation of Proposition~\ref{proposition Stratonovich Ito} is a
martingale, hence it has zero expected value. By the same assumption, we can
interchange expectation and integrals, and we get
\begin{multline*}
\quad  \int_{D}v(x,t) \, \varphi(x) \dx + E \Big[  \int
_{0}^{t}\int_{D}A(x,s) \,  Du(x,s) \, D\varphi(x) \dx \ds \Big]  \\
 =\int_{D}u_{0}(x) \, \varphi(x) \dx+\frac{\sigma^{2}}{2} 
\int_{0}^{t}\int_{D}v(x,s) \, \Delta\varphi(x) \dx \ds \,. \quad
\end{multline*}
From the property $E[\int_{0}^{T}\int_{B} \kabs{ Du(x,t)} \dx \dt ]<\infty$ and the boundedness of $A$ it follows that
\[
E \Big[  \int_{0}^{t}\int_{D} A(x,s) \, Du(x,s)
\, D\varphi(x) \dx \ds \Big]  =\int_{0}^{t}\int_{D}A(
x,s) \,  E[ Du(x,s)] \,  D\varphi(x)
\dx  \ds.
\]
Since we know from Step~2 that $E[Du(x,s)]=Dv(x,s)$, the proof is complete. 
\end{proof}

Let us now explain the possibly regularizing effect of noise.
Assume $\sigma=0$. Then, as already explained in the introduction, weak solutions 
may miss full regularity. One can find in~\cite{STAJOH95} an example of matrix $A$ satisfying assumption~\eqref{linear assumption} and an example of a weak solution to the associated parabolic system~\eqref{system-intro-linear}
such that it is H\"{o}lder continuous on a local time interval and then its 
$L^{\infty}$ norm blows-up. More precisely, this matrix turns out to have an 
ellipticity ratio $\frac{\lambda_{1}}{\lambda_{0}}$ which is smaller than the critical 
one employed before, which was an essential ingredient in order to obtain globally H\"older continuous weak solutions (see~\cite{KOSHELEV93,KALITA94,KOSHELEV95}). However, the matrix constructed by Star\'a and John~\cite{STAJOH95} also fails to satisfy the regularity with respect to $x$, i.\,e. the matrix $A$ is not differentiable in $x$. 
For this reason it is not clear whether the counterexample could by constructed  
due to the small ellipticity ratio or the low regularity in $x$ or a combination of both. As far as we know there is no counterexample available in the literature which answers this question, and so even in the deterministic setting this irregularity phenomenon for weak solutions of parabolic systems is not understood completely. Instead, for the elliptic (stationary) case Koshelev was able to give a sharp result, namely that (in the linear case considered in this section) full H\"older continuity of the weak solution to $\diverg (A(x) Du) = 0$ holds provided that the matrix $A$ is symmetric (for simplicity), measurable, bounded, and  satisfies~\eqref{linear assumption} with
\begin{equation*}
\frac{\lambda_1 - \lambda_0}{\lambda_1 + \lambda_0} \sqrt{1 +
\frac{(n-2)^2}{n-1}} < 1 \,.
\end{equation*}
The sharpness of this condition follows by a modification of De Giorgi's famous counterexample~\cite{DEGIORGI68}, see~\cite[Section~2.5]{KOSHELEV95}. Returning to the parabolic setting we now state a consequence from the previous Proposition~\ref{stoch to det}, which for randomly perturbed systems~\eqref{linear SPDE} gives a regularity result for the average $E[u(x,t)]$ if the matrix is assumed to be regular with respect to $x$. Since the existence of a deterministic counterexample is not clear, the Stratonovich multiplicative noise is only possibly regularizing, but in any case it might be of 
its own interest since the noise improves the parabolicity of the equation solved 
by the average. 

\begin{proposition}
Assume $q>n$, $A$ with property~\eqref{linear assumption} such that $\kabs{D_x A}$ is bounded uniformly by some constant $L>0$, and let $D \subset \R^n$ be a bounded, regular domain. Then there exists $\sigma_{0}\geq0$ such that for all $\sigma>\sigma_{0}$, all initial conditions $u_0 \in W^{1,q}(D,\R^N)$, and all weak solutions $u$ of equation ~\eqref{linear SPDE} satisfying~\eqref{bounds on u}, we have that $(x,t) \longmapsto E[u(x,t)]$ is locally H\"{o}lder continuous on $D \times [0,T]$. One can take $\sigma_{0}$ depending only on $\frac{\lambda_{1}}{\lambda_{0}}$.
\end{proposition}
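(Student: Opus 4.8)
The plan is to derive the statement entirely from the deterministic regularity theory, exploiting the fact (already isolated in Proposition~\ref{stoch to det}) that the average solves a deterministic parabolic system with an \emph{improved} dispersion ratio. Concretely, the first step is to apply Proposition~\ref{stoch to det}: since every weak solution $u$ of~\eqref{linear SPDE} satisfies~\eqref{bounds on u} (this is the content of Step~1 of its proof), the function $v(x,t):=E[u(x,t)]$ is a weak solution of the deterministic system~\eqref{parabolic determ}, which written in the form of~\eqref{system-intro-linear} reads $\partial_t v = \diverg(\widetilde A(x,t)\,Dv)$ with effective coefficient
\[
\widetilde A(x,t) := A(x,t) + \tfrac{\sigma^2}{2}\,\Id \,,
\]
and with deterministic initial datum $v|_{t=0} = E[u_0] = u_0 \in W^{1,q}(D,\R^N)$, $q>n$.

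Next I would verify that $\widetilde A$ belongs to the class covered by Kalita's Theorem~\ref{thm_Kalita}. It is measurable in $t$ and of class $C^1$ in $x$ with $\kabs{D_x\widetilde A} = \kabs{D_x A} \le L$, the added term being constant in $x$; this is precisely where the extra hypothesis that $\kabs{D_x A}$ is bounded (violated by the Star\'a--John example) is used. From~\eqref{linear assumption} one reads off at once that $\widetilde A$ satisfies~\eqref{ass-kalita-intro} with the shifted constants $\widetilde\lambda_0 = \lambda_0 + \tfrac{\sigma^2}{2}$, $\widetilde\lambda_1 = \lambda_1 + \tfrac{\sigma^2}{2}$. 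The key elementary observation is that, since necessarily $\lambda_0 \le \lambda_1$, the function $s\mapsto (\lambda_0+s)/(\lambda_1+s)$ is nondecreasing and tends to $1$ as $s\to\infty$; thus $\widetilde\lambda_0/\widetilde\lambda_1 > \lambda_0/\lambda_1$, and a one-line computation gives
\[
\frac{\widetilde\lambda_0}{\widetilde\lambda_1} > 1 - \frac{2}{n} \quad\Longleftrightarrow\quad \sigma^2 > (n-2)\,\lambda_1 - n\,\lambda_0 \,.
\]
Accordingly I would set $\sigma_0 := (\max\{0,\,(n-2)\lambda_1 - n\lambda_0\})^{1/2}$; dividing the last inequality by $\lambda_0$ shows that the admissible range of $\sigma^2/\lambda_0$ --~hence $\sigma_0$ once the scale $\lambda_0$ is normalized~-- depends only on the ellipticity ratio $\lambda_1/\lambda_0$.

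For $\sigma > \sigma_0$ all the hypotheses of Theorem~\ref{thm_Kalita} are then in force for the coefficient $\widetilde A$ and the initial datum $u_0$, so applying that theorem to the weak solution $v$ of~\eqref{parabolic determ} yields $v \in C^{0,\alpha}_{\mathrm{loc}}(D\times[0,T],\R^N)$ for some $\alpha>0$, i.e. $(x,t)\mapsto E[u(x,t)]$ is locally H\"older continuous on $D\times[0,T]$. I do not expect a genuine obstacle in this argument: the whole probabilistic difficulty --~the a~priori bound~\eqref{bounds on u}, the Stratonovich-to-It\^o conversion of Proposition~\ref{proposition Stratonovich Ito} that produces the decisive extra term $\tfrac{\sigma^2}{2}\Delta v$, and the interchange of the expectation with the spatial gradient and the time integral~-- has already been settled. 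The only points requiring a modicum of care are to match the variational formulation in~\eqref{parabolic determ} with the notion of weak solution used in Theorem~\ref{thm_Kalita} (routine, since both are the standard one) and to observe that the assertion is interior in $x$ and only up to the initial time, so that no boundary data for $v$ needs to be prescribed.
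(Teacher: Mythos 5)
Your argument is correct and is essentially the paper's own proof: the paper likewise invokes Proposition~\ref{stoch to det} to pass to $v=E[u]$, observes that $A(x,t)+\frac{\sigma^2}{2}\Id$ satisfies the hypotheses of the deterministic regularity theory (Theorem~\ref{thm_Kalita}) once $\sigma$ exceeds a threshold determined by the ellipticity constants, and concludes local H\"older continuity of $v$. Your only addition is to make the threshold explicit, $\sigma_0^2=\max\{0,(n-2)\lambda_1-n\lambda_0\}$, and to note honestly that the claim that $\sigma_0$ depends only on $\lambda_1/\lambda_0$ holds after normalizing the scale $\lambda_0$, a point the paper leaves implicit.
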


\begin{proof}
The matrix $A(x,t) +\frac{\sigma^{2}}{2}I$ satisfies the
assumptions needed for the deterministic regularity results in~\cite{KOSHELEV93,KALITA94,KOSHELEV95}, see also Theorem~\ref{thm_Kalita}, for all 
$\sigma$ greater than some $\sigma_{0}$ which can be defined in terms of $\frac{\lambda_{1}}{\lambda_{0}}$. This implies that any weak solution $v$ of equation
\eqref{parabolic determ} is locally H\"{o}lder continuous on $D \times [
0,T]$. It is sufficient to apply this result to $v(x,t)=E[u(x,t)]$. 
\end{proof}

\begin{remark}
Given the ratio $\frac{\lambda_{1}}{\lambda_{0}}$, the result is true for all
matrices $A$ with that ratio and all (regular) initial conditions. Intuitively speaking
it looks impossible that regularization comes from the operation of
mathematical expectation: it could regularize problems with special
symmetries, such that singularities for different $\omega$'s average out (compare Remark~\ref{rem_example_irregular} for this phenomenon under an irregular initial condition). But here $A$ and $u_{0}$ are quite generic (though regular). 
Thus we believe that H\"{o}lder regularization takes place at the level of $u$ itself. However, this problem is open.
\end{remark}

\footnotesize

\end{document}